\newtheorem{thm}{Theorem}[section]
\newtheorem{lemma}[thm]{Lemma}
\newtheorem{prop}[thm]{Proposition}
\newtheorem{defi}[thm]{Definition}
\theoremstyle{remark}
\newtheorem{rmk}[thm]{Remark}
\newtheorem*{rmk*}{Remark}
\newtheorem*{rmks*}{Remarks}
\renewcommand{\leq}{\leqslant}
\renewcommand{\geq}{\geqslant}
\newcommand{\dom}{\mathsf{D}}
\newcommand{\E}{\mathop{{}\mathbb{E}}}
\newcommand{\cF}{\mathscr{F}}
\newcommand{\cL}{\mathscr{L}}
\renewcommand{\P}{\mathbb{P}}
\newcommand{\erre}{\mathbb{R}}
\newcommand{\enne}{\mathbb{N}}
\def\cC{\mathscr{C}}
\def\cS{\mathscr{S}}
\newcommand{\longto}{\longrightarrow}
\DeclarePairedDelimiter\abs{\lvert}{\rvert}
\DeclarePairedDelimiter\norm{\lVert}{\rVert}
\DeclarePairedDelimiterX\ip[2]{\langle}{\rangle}{#1,#2}
\DeclarePairedDelimiterX\ipp[2]{\langle\!\langle}{\rangle\!\rangle}{#1,#2}
\DeclarePairedDelimiterX\oo[2]{]\!]}{[\![}{#1,#2}
\DeclarePairedDelimiterX\cc[2]{[\![}{]\!]}{#1,#2}
\DeclarePairedDelimiterX\co[2]{[\![}{[\![}{#1,#2}
\DeclarePairedDelimiterX\oc[2]{]\!]}{]\!]}{#1,#2}
\numberwithin{equation}{section}
\newif\ifbozza
\title{Well-posedness of monotone semilinear SPDEs with
  semimartingale noise}
  \author{Carlo Marinelli\thanks{Department of Mathematics, University
      College London, Gower Street, London WC1E 6BT, United
      Kingdom. URL: \texttt{http://goo.gl/4GKJP}} 
  \and Luca Scarpa\thanks{Faculty of Mathematics, University
      of Vienna, Oskar-Morgenstern-Platz 1, 1090 Vienna, Austria. 
      E-mail: \texttt{luca.scarpa@univie.ac.at}}}
\date{October 28, 2019}
\begin{document}
\maketitle

\begin{abstract}
  We prove existence and uniqueness of strong solutions for a class of
  semilinear stochastic evolution equations driven by general Hilbert
  space-valued semimartingales, with drift equal to the sum of a
  linear maximal monotone operator in variational form and of the
  superposition operator associated to a random time-dependent
  monotone function defined on the whole real line. Such a function is
  only assumed to satisfy a very mild symmetry-like condition, but its
  rate of growth towards infinity can be arbitrary. Moreover, the
  noise is of multiplicative type and can be path-dependent. The
  solution is obtained via a priori estimates on solutions to
  regularized equations, interpreted both as stochastic equations as
  well as deterministic equations with random coefficients, and
  ensuing compactness properties. A key role is played by an
  infinite-dimensional Doob-type inequality due to M\'etivier and
  Pellaumail.
  \medskip\par\noindent
  \emph{AMS Subject Classification:} 60H15, 47H06, 46N30.
  \medskip\par\noindent
  \emph{Key words and phrases:} stochastic evolution equations, singular drift, 
  semimartingale noise, variational approach, monotonicity methods
\end{abstract}


\section{Introduction}
\label{sec:intro}
Let us consider semilinear stochastic evolution equations of the type
\begin{equation}
  \label{eq:0}
  dX(t) + AX(t)\,dt + \beta(t,X(t))\,dt \ni B(t,X)\,dZ(t),
  \qquad X(0)=X_0,
\end{equation}
in $L^2(D)$, where $D$ is a smooth bounded domain of $\erre^n$. Here
$A$ is linear coercive maximal monotone operator on $L^2(D)$, $\beta$
is a random time-dependent maximal monotone graph everywhere defined
on the real line, $Z$ is a Hilbert space-valued semimartingale, and
the coefficient $B$ satisfies a suitable Lipschitz continuity
assumption (precise hypotheses on the data are given in
{\S}\ref{sec:ass} below).
Our main result is the existence and uniqueness of a strong solution
to \eqref{eq:0} (in the sense of Definition~\ref{def:sol} below), and
its continuous dependence on the initial datum in a suitable
topology.
Stochastic partial differential equations driven by semimartingales
arise naturally in several fields, such as physics, biology, and
finance, where a noise with possibly discontinuous trajectories can be
preferable, for modeling purposes, to the classical Wiener noise (see,
e.g., \cite{cm:EJP08,cm:MF10}). For further possible applications
where equations of the form \eqref{eq:0} are used we refer to
\cite{BrzLZ,LiuSte} and references therein.

Maximal monotone graphs such as $\beta$
arise naturally in the study of equations with non-linearities
associated to monotone discontinuous functions. In fact, it is well
known that every maximal monotone graph $\gamma$ in
$\erre \times \erre$ arises (in a unique way) from an increasing
function $\gamma_0:\erre \to \erre \cup \{+\infty\}$, setting
$\gamma(r):=[\gamma_0(r-),\gamma_0(r+)]$ for every $r \in \erre$,
i.e. by the procedure of ``filling the jumps''. Therefore our
treatment provides a notion of (strong) solution to stochastic
evolution equations of the type,
\[
  dX(t) + AX(t)\,dt + \beta_0(t,X(t))\,dt = B(t,X)\,dZ(t), \qquad
  X(0)=X_0,
\]
where $\beta_0(\omega,t,\cdot) \colon \erre \to \erre$ is an
increasing function, with possibly countably many discontinuities, and
with essentially no assumption on its rate of growth at
infinity. Stochastic evolution equations of this type are particularly
interesting as they cannot be handled using existing techniques, as
well as for their potential applications (equations with exponentially
growing drift appear, for instance, in mathematical models of
Euclidean quantum field theory -- see, e.g., \cite{Kawa}). In fact, to
the best of our knowledge, all results currently available in the
literature on stochastic equations with semimartingale noise are
obtained under assumptions on the coefficients that are too
restrictive to treat equation \eqref{eq:0}. In particular, after the
pioneering results by M\'etivier \cite{Met} for equations with bounded
$A$ and locally Lipschitz continuous drift and diffusion coefficients,
the first contribution to treat ``genuine'' stochastic evolution
equations (i.e., with $A$ unbounded) is probably \cite{Gyo-semimg},
where the well-posedness result in the variational setting for
equations with Wiener noise of \cite{KR-spde, Pard} is extended to the
case where the driving noise is a quasi left-continuous locally
square-integrable martingale, although under a rather restrictive
growth assumption on the (nonlinear) drift term. In particular,
semilinear equations such as \eqref{eq:0} can be treated with this
approach only if $\beta$ is Lipschitz continuous. More recently,
nonlinear equations in the variational setting driven by compensated
Poisson random measures have been considered, also under relaxed
monotonicity conditions, in \cite{BrzLZ}. Semilinear equations with
drift $A+\beta$, as in \eqref{eq:0}, can be treated within this
framework under polynomial growth assumptions on $\beta$ that depend
on the dimension of the domain $D \subset \erre^n$: the larger $n$ is,
the slower (polynomial) growth is allowed for $\beta$
(cf.~\cite{LiuRo} for a discussion of this issue). Our results do not
suffer of this drawback, as the growth rate of $\beta$ is not limited
in any way by the dimension $n$.
Multivalued stochastic equations with possibly c\`adl\`ag additive
noise have been studied also in \cite{GessTol}, under a linear growth
condition on the drift, so that semilinear equations such as
\eqref{eq:0} can be treated only if $\beta$ has at most linear growth.
Using semigroup methods, well-posedness for \eqref{eq:0} in the mild
sense is proved in \cite{cm:Ascona13,cm:EJP10}, under the assumptions
that $\beta$ grows polynomially and the noise is the sum of a Wiener
process and a compensated Poisson random measure (one should note,
however, that $A$ needs not admit a variational formulation).
The well-posedness result for \eqref{eq:0} obtained here should be
interesting also in the finite-dimensional setting, i.e. for
stochastic (ordinary) differential equations driven by
finite-dimensional semimartingales. In fact, apart of the classical
well-posedness results for equations with locally Lipschitz
coefficients (see, e.g., \cite{MetPell,Protter}), it seems that the
only work dealing with equations with monotone coefficients is
\cite{Jac:WP}, where, however, linear growth is required.

The strong solution to \eqref{eq:0} is constructed as limit of
solutions to approximating equations. In particular, replacing both
$A$ and $\beta$ with their Yosida approximations, one obtains a family
of approximating equations with bounded coefficients that admit
classical solutions in $L^2(D)$, thanks to results by M\'etivier and
Pellaumail (see \cite{Met,MetPell}). This double regularization is
necessary because, due to the general semimartingale noise, one cannot
simply regularize $\beta$ and rely on the classical variational theory
in \cite{Gyo-semimg,KR-spde,Pard}. Since we allow $\beta$ to be
random, care is needed to make sure that its Yosida approximation is
at least a progressively measurable function (see~{\S}\ref{sec:ass}
below for detail on this technical issue).
Furthermore, we first consider such regularized equations with
additive noise, i.e. with $B$ possibly random, but not dependent on
the unknown, and with the semimartingale $Z$ satisfying extra
integrability conditions that are removed in a second step.
Interpreting such approximating equations either as ``true''
stochastic equations or as deterministic evolution equations with
random coefficients (cf.~\cite{cm:ref,cm:AP18}), we obtain a priori
estimates for their solutions in various topologies. This idea has
already been used in \cite{cm:AP18} to deal with the well-posedness of
semilinear equations with singular drift and Wiener noise, and later
in \cite{cm:ref} to study regularity properties of their solutions.
The much more general assumptions on the noise in the present
situation give rise to several difficulties that require new ideas
with respect to \cite{cm:ref,cm:AP18}.  A fundamental tool is an
infinite-dimensional maximal inequality for stochastic integrals with
respect to semimartingales due to M\'etivier and Pellaumail
(see~\cite{Met,MetPell}).
These a priori estimates imply enough compactness to pass to the limit
in the regularized equations, thus solving a version of \eqref{eq:0}
with additive noise. The assumption that $\beta$ is everywhere defined
plays here a crucial role, as it allows to use weak compactness
techniques in $L^1$ spaces.
In order to treat the general case with multiplicative noise, we
proceed as follows: using localization techniques, we first show the
existence of strong solutions on closed stochastic intervals. This
technique also allows to remove the extra integrability assumption on
$Z$. Uniqueness of solutions on closed stochastic intervals implies
that such local solutions form a directed system, so that it is
natural to construct a maximal solution.  Finally, the linear growth
of $B$ is shown to imply that the maximal solution can be extended to
any compact time interval. One can also show that the solution depends
continuously on the initial datum in the sense of the topology of
uniform (in time) convergence in probability.

Several auxiliary results are needed to carry out the program outlined
above, some of which are interesting in their own right. For instance,
we prove a general version of It\^o's formula for the square of the
$L^2(D)$-norm in a variational setting with possibly singular terms.
This can be seen as an extension of the classical formulas by Pardoux,
Krylov, and Rozovski\u{\i} \cite{KR-spde,Pard}, as well as by Krylov
and Gy\H{o}ngy \cite{KryGyo2}, at least in the case where the
variational triple is Hilbertian. We shall investigate in more detail
It\^o-type formulas in (generalized) variational settings in a work in
preparation. We also give a characterization of weakly c\`adl\`ag
processes in terms of essential boundedness (in time) and a weak
c\`adl\`ag property in a larger space, extending the classical result
on weak continuity for vector-valued functions by Strauss (see
\cite{Strauss}).

The remaining text is organized as follows: in {\S}\ref{sec:ass} we
fix the notation, collect all standing assumptions, and discuss some
notable consequences thereof that are going to be used
extensively. The definition of strong solution, both in the global and
the local sense, and the statement of the main well-posedness result
are given in {\S}\ref{sec:res}.  In {\S}\ref{sec:prel} we recall some
elements of the above-mentioned approach by M\'etivier and Pellaumail
to stochastic integration with respect to semimartingales in Hilbert
space, centered around a fundamental stopped Doob-type inequality. We
also prove an extension to the c\`adl\`ag case of a classical
criterion for weak continuity of vector-valued function due to
Strauss, as well as a slight generalization of a classical criterion
for uniform integrability by de la Vall\'e-Poussin. In
{\S}\ref{sec:Ito} we prove an It\^o-type formula for the square of the
$L^2(D)$ norm of a process that can be decomposed into the sum of a
stochastic integral with respect to a (Hilbert-space-valued)
semimartingale and of a Lebesgue integral of a singular drift
term. This result is an essential tool to obtain, in
{\S}\ref{sec:add}, an auxiliary well-posedness result for a version of
\eqref{eq:0} with additive noise. Finally, the proof of the main
result is presented in {\S}\ref{sec:mult}.

\medskip

\noindent\textbf{Acknowledgment.} Large part of the work for this
paper was done during several stays of the first-named author at the
Interdisziplin\"ares Zentrum f\"ur Komplexe Systeme (IZKS),
Universit\"at Bonn, Germany, as guest of Prof.~S.~Albeverio. His kind
hospitality and the excellent working conditions at IZKS are
gratefully acknowledged. The second-named author
was funded by Vienna Science and Technology Fund (WWTF) 
through Project MA14-009.


\ifbozza\newpage\else\fi
\section{Assumptions and first consequences}
\label{sec:ass}
\subsection{Notation}
Every Banach space is intended as a real Banach space.
For any Banach spaces $E$ and $F$, we shall denote the Banach space of
continuous linear operators from $E$ to $F$ by $\cL(E,F)$, if endowed
with the operator norm, and by $\cL_s(E,F)$, if endowed with the
strong operator topology (i.e. with the topology of simple
convergence). If $E=F$, we shall just write $\cL(E)$ in place of
$\cL(E,E)$.
The usual Lebesgue-Bochner spaces of $E$-valued functions on a measure
space $(Y,\mathscr{A},m)$ will be denoted by $L^p(Y;E)$,
$p \in [0,\infty]$, where $L^0(Y;E)$ is endowed with the (metrizable)
topology of convergence in measure. The set of continuous functions
and of weakly continuous functions on $[0,T]$ with values in $E$ will
be denoted by $C([0,T];E)$ and $C_w([0,T];E)$,
respectively. Analogously, the symbols $D([0,T];E)$ and $D_w([0,T];E)$
stand for the corresponding spaces of c\`adl\`ag functions. A function
$f:Y \to \cL(E,F)$ will be called strongly measurable if it is the
limit in the norm topology of $\cL(E,F)$ of a sequence of elementary
functions. For every $f\in D([0,T];E)$ we shall use the symbol 
$f^*$ for $\sup_{t\in[0,T]}\norm{f(t)}_E$.

We shall denote by $D$ a smooth bounded domain of $\erre^n$, and by
$H$ the Hilbert space $L^2(D)$ with its usual scalar product
$\ip{\cdot}{\cdot}$ and norm $\norm{\cdot}$. 

All random elements will be defined on a fixed probability space
$(\Omega, \cF, \P)$ endowed with a filtration $(\cF_t)_{t\in\erre_+}$
satisfying the ``usual assumptions'' of right-continuity and
completeness. Identities and inequalities between random variables
will always be meant to hold $\P$-almost surely, unless otherwise
stated. Two (measurable) processes will be declared equal if they are
indistinguishable. By $Z$ we shall denote a fixed semimartingale
taking values in a (fixed) separable Hilbert space $K$. The standard
notation and terminology of stochastic calculus for semimartingales
will be used (see, e.g., \cite{Met}).

For any $a,b\in\erre$ we shall write $a\lesssim b$ to indicate that there 
exists a constant $c>0$ such that $a\leq cb$.

\subsection{Assumptions}
The following hypotheses will be in force throughout the paper. 

\medskip\par\noindent
\textbf{Assumption (A).}  We assume that $A \in \cL(V,V')$, where $V$
is a separable Hilbert space densely, continuously and compactly
embedded in $H$, and that there exists a constant $c>0$ such that
\[
  \ip{Au}{u} \geq c \norm{u}_V^2 \qquad \forall u \in V.
\]
We denote by $A_2$ the part of $A$ in $H$, i.e. the unbounded linear 
operator $(A_2, \dom(A_2))$ on $H$ defined as 
$A_2v:=Av$ for $v\in\dom(A_2):= \{v \in V: Av \in H\}$.  Furthermore, we assume that
there exists a sequence $(T_n)_{n\in\enne}$ of linear injective operators on
$L^1(D)$ such that, for every $n \in \enne$,
\begin{itemize}
\item[(a)] $T_n:L^1(D)\to L^1(D)$ is sub-Markovian, i.e., if
  $f \in L^1(D)$ with $0 \leq f\leq 1$ a.e.~in $D$, then
  $0 \leq T_nf \leq 1$ a.e.~in $D$;
\item[(b)] $T_n$ is ultracontractive, i.e
  $T_n\in \cL(L^1(D), L^\infty(D))$.
\end{itemize}
Moreover, denoting the restriction of $T_n$ to $H$ by the same symbol,
we assume that
\begin{itemize}
\item[(c)] $T_n \in \cL(H, V)$ for every $n \in \enne$ and it can be
  extended to a continuous linear operator on $V'$, still denoted by
  the same symbol;
\item[(d)] $T_n$ converges to the identity in $\cL_s(E)$, with
  $E \in \{L^1(D),V,H,V'\}$, as $n \to \infty$;
\item[(e)] $T_n(H)=T_m(H)$ for every $n,m\in\enne$.
\end{itemize}
Throughout the work, we shall denote by $V_0$ a Hilbert space
continuously embedded in $V \cap L^\infty(D)$ and dense in $V$. Thanks
to the assumptions on $(T_n)$ such a space always exists, for instance
setting $V_0 := T_{\bar{n}}(H)$, with $\bar{n}$ an arbitrary (but
fixed) natural number. Indeed, $V_0$ is independent of $\bar n$ thanks
to (e), so that for every $v\in V$ the sequence $(T_nv)_n\subset V_0$
converges to $v$ in $V$ thanks to (d). An arbitrary but fixed terminal
time will be denoted by $T$.

\medskip\par\noindent
\textbf{Assumption (J).} 
Let
$j:\Omega \times[0,T] \times \erre \to \erre_+$ be a function
satisfying the following conditions:
\begin{itemize}
\item[(a)] $j(\cdot,\cdot,x)$ is progressively measurable for all
  $x \in \erre$;
\item[(b)] $j(\omega,t,\cdot)$ is convex for every
  $(\omega,t) \in \Omega \times [0,T]$, with $j(\cdot,\cdot,0)=0$;
\item[(c)] one has
  \[
    \limsup_{|x| \to \infty} \frac{j(\omega,t,x)}{j(\omega,t,-x)} <
    \infty
  \]
  uniformly with respect to $(\omega,t) \in \Omega \times [0,T]$.
\end{itemize}
For every $(\omega,t) \in \Omega \times [0,T]$, the maximal monotone
graph $\beta(\omega,t,\cdot) \subset \erre^2$ is defined as the
subdifferential of $j(\omega,t,\cdot)$, i.e.
$y \in \beta(\omega,t,x)$ if and only if
\[
  j(\omega,t,x) + y(z-x) \leq j(\omega,t,z) \qquad \forall z \in \erre.
\]
Seeing the maximal monotone graph $\beta(\omega,t,\cdot)$ as a
multivalued map, condition (b) implies that
\begin{itemize}
\item[(d)] $\beta(\omega,t,\cdot)$ is everywhere defined for every
  $(\omega,t) \in \Omega \times [0,T]$.
\end{itemize}
We further assume that
\begin{itemize}
\item[(e)] $\beta(\omega,t,\cdot)$ is bounded on bounded sets
  uniformly with respect to $(\omega,t)$.
\end{itemize}  

\medskip\par\noindent
The forthcoming assumptions on the coefficient $B$ are formulated in
terms of control processes for semimartingales, whose definition is
given in {\S}\ref{ssec:int} below.
\medskip\par\noindent
\textbf{Assumption (B).} Let
$B:\Omega \times[0,T] \times D([0,T]; H) \to \cL(K,H)$
be a map satisfying the following conditions:
\begin{itemize}
\item[(a)] the process $B(\cdot,\cdot,u)$ is a strongly predictable
  $\cL(K,H)$-valued process for every adapted c\`adl\`ag $H$-valued
  process $u$;
\item[(b)] for every stopping time $\tau \leq T$, and for every adapted
  c\`adl\`ag $H$-valued processes $u$, $v$,
  \[
  u1_{\co{0}{\tau}} = v1_{\co{0}{\tau}} \qquad\text{implies} \qquad
  B(\cdot,u) 1_{\cc{0}{\tau}} = B(\cdot,v)1_{\cc{0}{\tau}};
  \]
\item[(c)] for every control process $C$ of $Z$ there exists an
  increasing, nonnegative, right-continuous, adapted process $L$ such
  that, for every $t \in \mathopen]0,T]$ and every adapted c\`adl\`ag $H$-valued
  processes $u$, $v$, one has
\begin{align*}
  \int_0^t \norm[\big]{B(s,u)-B(s,v)}^2_{\cL(K,H)}\,dC(s) \leq
  \int_0^t \sup_{r<s}\norm[\big]{u(r)-v(r)}^2\,dL(s),\\
  \int_0^t \norm[\big]{B(s,u)}^2_{\cL(K,H)}\,dC(s) \leq
  \int_0^t \Bigl( 1+\sup_{r<s}\norm[\big]{u(r)}^2 \Bigr)\,dL(s).
\end{align*}
\end{itemize}
Assumptions (a) and (b) are immediately satisfied if $B$ is of the form
$B(\omega,t,u)=\widetilde{B}(\omega,t,u(t-))$ for
all $u\in D([0,T]; H)$ and 
$(\omega,t) \in \Omega \times [0,T]$, with the convention 
$u(0-):=u(0)$, where
$\widetilde{B}: \Omega \times[0,T] \times H \to \cL(K,H)$ is strongly
measurable with respect to the product $\sigma$-algebra of the
predictable $\sigma$-algebra and of the Borel $\sigma$-algebra of
$H$. A more refined criterion can be found in
\cite[{\S\S}6.2--6.4]{MetPell}.

\medskip\par\noindent
Finally, the initial datum $X_0$ is an $H$-valued
$\cF_0$-measurable random variable.

\subsection{On assumptions (A) and (J)}
Assumptions (A) and (J) have important consequences that will be
extensively used in the sequel. The most important ones are collected
in this subsection.

The hypotheses on $V$ and $A$ ensure that $(V,H,V')$ is a Hilbertian
variational triple and that the operator $A$ is maximal monotone from
$V$ to $V'$. Moreover, as it follows by coercivity, linearity, and
monotonicity, $A$ is bijective form $V$ to $V'$. However, in
applications it is often necessary to consider only the weaker
coercivity on $A$
\[
  \ip{Au}{u} \geq c \norm{u}_V^2 - \delta \norm{u}^2
  \qquad \forall u \in V,
\]
with $\delta>0$ a constant. This case can be included in our analysis
by considering the operator $A+\delta I$ instead of $A$.

The hypotheses on $A$ are met by large classes of differential
operators (second order symmetric and non-symmetric divergence-form
operators, as well as the fractional Laplacian, for example) -- see,
e.g., \cite{cm:AP18} for a detailed list of concrete examples.

The standard example of a family of operators $(T_n)$ that can be
shown to satisfy conditions (a)--(d) above for large classes of
operators $A$ is $T_n := (I + (1/n)A)^{-m}$, with $m \in \enne$
sufficiently large. We refer again to, e.g., \cite{cm:AP18} for a
discussion of this issue. Moreover, note that for $T_n$ to belong to
$\cL(V')$ it suffices that the commutator
$R_n:=T_nA_2-AT_n:\dom(A_2) \to V'$ can be continuously extended to a
linear bounded operator from $V \to V'$. In fact, this allows to
extend $T_n$ to a linear bounded operator on $V'$ as follows: for any
$y \in V'$, by surjectivity of $A$ one has $y = Au$, with $u \in
V$. Setting $T_ny := R_nu + AT_nu \in V'$, in order to check that this
is well defined it is sufficient to prove that if $u \in V$ is such
that $Au=0$, then $R_nu + AT_nu=0$.  Let $u \in V$ be such that
$Au=0$. Then $Au \in H$, hence $u \in \dom(A_2)$, and
$0=Au=A_2u$. Since $T_n$ has already been defined on $H$, we have
$0=T_nA_2u=R_nu + AT_nu$.  Finally, we have
\begin{align*}
  \norm{T_ny}_{V'}
  &\leq\norm{R_nu}_{V'}+\norm{AT_nu}_{V'}
    \leq \norm{R_n}_{\cL(V,V')}\norm{u}_{V}
    + \norm{A}_{\cL(V,V')}\norm{T_n}_{\cL(V)}\norm{u}_V\\
  &\leq \left( \norm{R_n}_{\cL(V,V')}\norm{A^{-1}}_{\cL(V',V)}+
    \norm{A}_{\cL(V,V')}\norm{T_n}_{\cL(V)} \right) \norm{v}_{V'},
\end{align*}
so that $T_n:V'\to V'$ is also bounded.

The Banach-Steinhaus theorem implies that the sequence of linear
operators $(T_n)$ is bounded in $\cL(H)$, $\cL(V)$, and $\cL(V')$,
i.e.
\[
  \sup_{n\in\enne} \norm[\big]{T_n}_{\cL(H)}
  + \sup_{n\in\enne} \norm[\big]{T_n}_{\cL(V)} 
  +\sup_{n\in\enne} \norm[\big]{T_n}_{\cL(V')} < \infty.
\]
The continuity property of the adjoint family $(T_n^*)$ established
next plays an important role in the proof of the It\^o-type formula
for the square of the $H$-norm in \S\ref{sec:Ito}.
\begin{lemma}
  \label{lm:adj}
  The sequence of adjoint operators
  $(T^*_n)_{n\in\enne} \subset \cL(H)$ is contained in $\cL(V)$ and
  converges to the identity in $\cL_s(H)$.
\end{lemma}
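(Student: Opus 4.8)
The plan is to prove the two assertions separately, exploiting throughout the Gelfand-triple structure of $(V,H,V')$ and, crucially, the compactness of the embedding $V \embed H$. The starting observation is that, for each $n$, the $H$-adjoint $T_n^*$ restricted to $V$ can be identified with the transpose (Banach-space dual) of the extension $T_n \in \cL(V')$, which I shall denote by $T_n^\star \colon V \to V$, using the identification $(V')' = V$ furnished by the duality pairing $\ip{\cdot}{\cdot}_{V',V}$ that extends $\ip{\cdot}{\cdot}$. By definition, $T_n^\star$ satisfies $\ip{f}{T_n^\star v}_{V',V} = \ip{T_n f}{v}_{V',V}$ for all $f \in V'$ and $v \in V$.

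To establish the identification, I would fix $v \in V$ and test against an arbitrary $u \in H \subset V'$. Since $T_n \in \cL(H,V)$ by Assumption (A)(c), one has $T_n u \in V$, so that both pairings collapse to $H$-scalar products: $\ip{u}{T_n^\star v}_{V',V} = \ip{T_n u}{v}_{V',V}$ becomes $\ip{u}{T_n^\star v} = \ip{T_n u}{v} = \ip{u}{T_n^* v}$. As $u \in H$ is arbitrary, $T_n^* v = T_n^\star v \in V$, whence $T_n^*(V) \subset V$ and $T_n^*|_V = T_n^\star$. Since $\norm{T_n^\star}_{\cL(V)} = \norm{T_n}_{\cL(V')}$, and the latter is finite (indeed uniformly bounded in $n$, by the Banach--Steinhaus argument recalled above), this proves both $T_n^* \in \cL(V)$ and $\sup_n \norm{T_n^*}_{\cL(V)} < \infty$.

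For the convergence in $\cL_s(H)$, I would first show $T_n^* v \to v$ in $H$ for every $v \in V$, and then extend to all of $H$ by density and uniform boundedness in $\cL(H)$ (note $\norm{T_n^*}_{\cL(H)} = \norm{T_n}_{\cL(H)}$ is bounded in $n$). For fixed $v \in V$, the family $(T_n^\star v)_n$ is bounded in $V$ by the previous step; moreover, for every $f \in V'$, the defining identity together with the convergence $T_n f \to f$ in $V'$ (Assumption (A)(d) with $E=V'$) yields $\ip{f}{T_n^\star v}_{V',V} = \ip{T_n f}{v}_{V',V} \to \ip{f}{v}_{V',V}$, i.e.\ $T_n^\star v \rightharpoonup v$ weakly in $V$. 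At this point the compactness of $V \embed H$ upgrades this weak convergence to strong convergence in $H$: a bounded sequence in $V$ converging weakly in $V$ converges strongly in $H$ to the same limit. Hence $T_n^* v = T_n^\star v \to v$ in $H$ for all $v \in V$, and the usual three-$\eps$ argument completes the proof.

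The only genuine subtlety is that strong operator convergence is \emph{not} preserved under taking adjoints, so one cannot deduce $T_n^* \to I$ in $\cL_s(H)$ directly from $T_n \to I$ in $\cL_s(H)$. The device that circumvents this is to transfer the problem to $V'$: the strong convergence $T_n \to I$ in $\cL_s(V')$ produces, by duality, only \emph{weak} convergence of the adjoints in $V$, and it is precisely the compact embedding $V \embed H$ that turns this weak $V$-convergence into the strong $H$-convergence required. This is the one place where the compactness of the embedding is essential.
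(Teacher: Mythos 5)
Your proof is correct and follows essentially the same route as the paper's: both establish $T_n^*x\in V$ with the uniform bound $\sup_n\norm{T_n}_{\cL(V')}\norm{x}_V$ via the duality with the extension of $T_n$ to $V'$, both use the compactness of $V\embed H$ to upgrade boundedness in $V$ to strong convergence in $H$, and both finish by density and uniform boundedness in $\cL(H)$. The only (minor, and in fact slightly cleaner) deviation is that you obtain weak convergence $T_n^*v\rightharpoonup v$ in $V$ for the whole sequence directly from $T_n\to I$ in $\cL_s(V')$, whereas the paper extracts weakly convergent subsequences by reflexivity, identifies the limit through the weak $H$-convergence $T_n^*x\rightharpoonup x$, and then invokes the subsequence principle.
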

\begin{proof}
  By the continuity of $(T_n)$ in $\cL_s(H)$ one has, for every $x$,
  $y \in H$,
  \[
    \ip{T_n^*x}{y} = \ip{x}{T_ny} \to \ip{x}{y},
  \]
  hence $T_n^*x$ converges weakly to $x$ in $H$ for every $x \in H$.
  Furthermore, for any $x \in V$ and $y \in H$, one has
  \[
    \ip{T_n^*x}{y} = \ip{x}{T_ny} \leq
    \norm[\big]{x}_V \norm[\big]{T_ny}_{V'}
    \leq N \norm[\big]{x}_V \norm[\big]{y}_{V'},
  \]
  where $N:=\sup_{n\in\enne}\norm{T_n}_{\cL(V')}$.  Since $H$ is
  densely and continuously embedded in $V'$, this readily implies that
  $T_n^*x \in V'' \simeq V$ and
  \[
    \norm[\big]{T_n^*x}_V \leq N \norm[\big]{x}_V
    \qquad \forall x \in V, \quad \forall n \in \enne.
  \]
  Since $V$ is reflexive, for any sequence $(n') \subset \enne$, there
  exist $z \in V$ and a subsequence $(n'') \subset (n')$, possibly
  depending on $x$, and such that $T_{n''}^*x$ converges weakly in $V$
  to $z$ as $n'' \to \infty$. Since $V$ is compactly embedded in $H$,
  $T_{n''}^*x$ converges strongly to $z$ in $H$. Recalling that
  $T_n^*x$ converges weakly to $x$ as $n \to \infty$, hence that so
  does $T_{n''}^*x$, we infer that $z=x$, i.e., $T_{n''}^*x$ converges
  strongly to $x$ in $H$. By a standard result of classical analysis,
  this yields the convergence of $T_n^*x$ to $x$ in $H$, that is,
  along the original sequence, which is independent of $x \in V$. The
  result can finally be extended to $x \in H$ by a density argument:
  let $(x_k) \subset V$ be a sequence converging to $x$ in $H$. The
  triangle inequality yields
  \begin{align*}
    \norm[\big]{T_n^*x-x}
    &\leq \norm[\big]{T_n^*x-T_n^*x_k}
    + \norm[\big]{T_n^*x_k-x_k} + \norm[\big]{x_k-x}\\
    &\leq \bigl( 1 + \sup\nolimits_n \norm[\big]{T_n}_{\cL(H)} \bigr)
    \norm[\big]{x_k-x} + \norm[\big]{T_n^*x_k-x_k},
  \end{align*}
  from which one easily concludes.
\end{proof}
\begin{rmk}
  In general, the adjunction map $T \mapsto T^*$ for linear bounded
  operators on a Hilbert space is continuous with respect to the
  uniform and the weak operator topology, but not with respect to the
  strong operator topology. The previous lemma thus identifies a
  (very!) special subset of linear bounded operators for which the
  adjunction map is continuous also with respect to the strong operator
  topology.
\end{rmk}

\medskip

Let us now discuss some consequences of assumption (J). For every
$(\omega,t) \in \Omega \times [0,T]$, let $j^*(\omega,t,\cdot)$ denote
the convex conjugate of $j(\omega,t,\cdot)$, defined as
\[
  j^*(\omega,t,y) = \sup_{x \in \erre} \bigl( xy - j(\omega,t,x) \bigr).
\]
The measurability and continuity hypotheses on $j$ imply that $j$ and
$j^*$ are normal integrands, or, equivalently, that their epigraphs
are progressively Effros-measurable (see, e.g., \cite{Hess:meas,
  Rock:int}). More precisely, let us recall that, given a function
$\phi: \Omega \times [0,T] \times \erre \to \erre$, its epigraph at
$(\omega,t) \in \Omega \times [0,T]$ is given by
\[
  \operatorname{epi} \phi(\omega,t) := \bigl\{ (x,y) \in \erre^2:\,
  \phi(\omega,t,x) \leq y \bigr\}.
\]
The progressive Effros-measurability of the epigraph of $\phi$ is then
defined as the progressive measurability of the set
\[
  \bigl\{ (\omega,t) \in \Omega \times[0,T]:\, \operatorname{epi}
  \phi(\omega,t) \cap E \neq \varnothing \bigr\}
\]
for every open $E \subset \erre^2$.

Moreover, if $j$ is a normal integrand, then $\beta$ is also
progressively Effros-measurable (see \emph{op.~cit}), which in turn
implies that the resolvent $(I+\lambda\beta)^{-1}$ and the Yosida
approximation $\beta_\lambda$ of $\beta$, both real-valued functions
on $\Omega \times [0,T] \times \erre$, are measurable with respect to
the product of the progressive $\sigma$-algebra and the Borel
$\sigma$-algebra (see, e,g., \cite[Proposition~3.12]{LiuSte}).

Assumption (c) can be interpreted by saying that, for any fixed
$(\omega,t)$, the rates of growth of $j$ at plus and minus infinity
are comparable. For instance, this is satisfied if $j(\omega,t,\cdot)$
is even for every $(\omega,t)$.

Assumption (d) implies that $j^*(\omega,t,\cdot)$ is superlinear at
infinity, uniformly with respect to $(\omega,t)$, i.e. that
\[
  \lim_{|y|\to+\infty}\frac{j^*(\omega,t,y)}{|y|} = +\infty
  \quad\text{uniformly in~$(\omega,t)\in\Omega\times[0,T]$}\,.
\]

Lastly, taking $z=0$ in the definition of $\beta$ as subdifferential
of $j$, assumption (e) implies that, for all
$(\omega,t) \in \Omega \times [0,T]$, $j(\omega,t,x) \leq yx$ for all
$y \in \beta(\omega,t,x)$, that is, $j(\omega,t,\cdot)$ is bounded on
bounded sets uniformly over $\Omega \times [0,T]$.

The above measurability conditions are obviously satisfied if $\beta$
is non-random and time-independent, i.e. if $\beta$ is an everywhere
defined maximal monotone graph in $\erre \times \erre$. Moreover, in
this case the convex function $j:\erre\to\erre_+$ such that
$\partial j=\beta$ and $j(0)=0$ is uniquely determined, and
$\dom(\beta)=\erre$ implies that $j^*$ is superlinear at infinity.

The boundedness assumption (e) is the natural generalization of the
analogous ones commonly used for time-dependent maximal monotone
graphs (see, e.g., \cite[p.~4]{Barbu:par}).

Note that all the conditions assumed to hold for every
$(\omega,t) \in \Omega \times [0,T]$ could have been assumed for
almost every $(\omega,t) \in \Omega \times [0,T]$ instead.  Indeed, in
such a case, if $E \subset \Omega \times[0,T]$ has measure $0$ and all
hypotheses hold outside $E$, then one can consider the restriction of
$j$ to the complement of $E$ instead of $j$.


\ifbozza\newpage\else\fi
\section{Main result}
\label{sec:res}
The concept of solution we are going to work with is as follows. We
recall that $T \in \erre_+$ is an arbitrary but fixed time horizon.
\begin{defi}
  \label{def:sol}
  Let $\tau \leq T$ be a stopping time. A strong solution on
  $[\![0,\tau]\!]$ to \eqref{eq:0} is a pair $(X,\xi)$, where $X$ is
  an adapted c\`adl\`ag $H$-valued process and $\xi$ is an adapted
  $L^1(D)$-valued process, such that
  \begin{itemize}
  \item[(a)] $\mathbbm{1}_{\cc{0}{\tau}} X \in L^1(0,T;V)$ and
    $\mathbbm{1}_{\cc{0}{\tau}} \xi \in L^1([0,T] \times D)$ $\P$-a.s., with
    $\xi \in \beta(\cdot,X)$ a.e. in $\co{0}{\tau} \times D$;
  \item[(b)] $\mathbbm{1}_{\cc{0}{\tau}}B(\cdot, X)$ is integrable with
    respect to $Z$;
  \item[(c)] one has, as an identity in $V' \cap L^1(D)$,
    \[
      X^\tau + \int_0^{\cdot\wedge\tau} AX(s)\,ds +
      \int_0^{\cdot\wedge\tau} \xi(s)\,ds = X_0
      + \bigl( \mathbbm{1}_{[\![0,\tau]\!]}B(\cdot,X) \bigr) \cdot Z.
    \]
  \end{itemize}
\end{defi}
\noindent%
A strong solution on $[0,T]$ will simply be called a strong solution.

\medskip

The main results of the paper are collected in the following
theorem. These ensure that \eqref{eq:0} admits a strong solution,
which is unique within a natural class of processes, and depends
continuously on the initial datum.
\begin{thm}
  \label{thm:1}
  Equation \eqref{eq:0} admits a strong solution $(X,\xi)$, with $X$
  optional, and it is the only one such that
  \[
    \sup_{t\leq T}\norm{X(t)}^2 + \int_0^T\norm{X(s)}_V^2\,ds +
    \int_0^T\!\!\int_D\xi(s)X(s)\,dx\,ds < \infty \qquad\P\text{-a.s.}
  \]
  Moreover, the solution map $X_0 \mapsto X$ is continuous from
  $L^0(\Omega;H)$ to $L^0(\Omega; D([0,T]; H) \cap L^2(0,T; V))$,
  where $D([0,T];H)$ is endowed with the topology generated by the
  supremum norm.
\end{thm}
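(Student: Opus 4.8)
The plan is to follow the architecture announced in the introduction: build the solution as a limit of doubly regularized problems, solve first the additive-noise version of \eqref{eq:0}, and then reach the multiplicative case by localization. Throughout I would use the Itô-type formula for the square of the $H$-norm proved in \S\ref{sec:Ito} and the M\'etivier--Pellaumail maximal inequality of \S\ref{sec:prel}. First I would replace $A$ and $\beta$ by bounded Lipschitz approximations, namely the Yosida approximation $\beta_\lambda$ of $\beta$ (progressively measurable thanks to the measurability discussion of \S\ref{sec:ass}) and the Yosida approximation $A_\lambda$ of the part $A_2$ of $A$ in $H$, so that the regularized equation has bounded coefficients and admits a classical $H$-valued solution $X_\lambda$ by the theory of \cite{Met,MetPell}. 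Treating first the additive case (with $B$ independent of the unknown and $Z$, $X_0$ enjoying extra integrability), applying the Itô formula to $\norm{X_\lambda}^2$, using coercivity $\ip{Au}{u}\geq c\norm{u}_V^2$ and $\beta_\lambda(r)r\geq 0$, and controlling the stochastic and quadratic-variation terms by the M\'etivier--Pellaumail inequality together with (B)(c), yields bounds uniform in $\lambda$ for $\E\sup_{t\leq T}\norm{X_\lambda(t)}^2$, for $\E\int_0^T\norm{X_\lambda}_V^2\,ds$, and for $\E\int_0^T\!\!\int_D \beta_\lambda(X_\lambda)X_\lambda$.

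The crux is the extraction of limits and the identification of the nonlinearity. The $V$- and $H$-bounds give weak limits of $X_\lambda$ in $L^2(\Omega\times(0,T);V)$ and weak-$*$-in-time limits, while the compact embedding $V\embed H$ supplies the pathwise strong compactness in $H$ needed to treat the nonlinear term. For $\xi_\lambda:=\beta_\lambda(X_\lambda)$, the Fenchel equality
\[
\xi_\lambda X_\lambda = j_\lambda(X_\lambda)+j_\lambda^*(\xi_\lambda)\geq j^*(\xi_\lambda),
\]
combined with the superlinearity of $j^*$ at infinity (a consequence of (J)(d), i.e.\ $\dom\beta=\erre$) and the de la Vall\'ee--Poussin criterion of \S\ref{sec:prel}, yields uniform integrability of $(\xi_\lambda)$ on $\Omega\times(0,T)\times D$; by Dunford--Pettis one obtains a weak $L^1$-limit $\xi$. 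Passing to the limit in the equation is then routine, and the inclusion $\xi\in\beta(\cdot,X)$ follows from a Minty-type monotonicity argument, using the energy identity to bound $\limsup_\lambda\int\xi_\lambda X_\lambda$ from above by $\int\xi X$ together with the weak lower semicontinuity of the convex integrand associated to $j$. Interpreting the regularized equations pathwise as deterministic evolution equations with random coefficients, as in \cite{cm:ref,cm:AP18}, provides the $\P$-a.s.\ estimates and the $L^0$-type convergences underlying the continuous-dependence statement. This completes the additive-noise result of \S\ref{sec:add}.

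For the multiplicative case of \S\ref{sec:mult} I would localize. On a closed stochastic interval $\cc{0}{\tau}$ the Lipschitz bound (B)(c) makes the map sending a process to the solution of the additive problem with coefficient $B(\cdot,X)$ a contraction for $\tau$ small enough, yielding a local strong solution; stopping the control process of $Z$ simultaneously removes the extra integrability hypothesis. Uniqueness on $\cc{0}{\tau}$ — proved by applying the Itô formula to $\norm{X-Y}^2$, absorbing the drift differences by monotonicity of $A$ and $\beta$ and the stochastic term by (B)(c) and the maximal inequality, and then invoking Gronwall's lemma — shows that local solutions form a directed system, whence a maximal solution exists. The linear growth in (B)(c) together with the a priori bound then excludes explosion and extends the maximal solution to all of $[0,T]$. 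Global uniqueness within the class where the stated functional is a.s.\ finite, and the continuity of $X_0\mapsto X$ from $L^0(\Omega;H)$ to $L^0(\Omega;D([0,T];H)\cap L^2(0,T;V))$, follow from the same difference estimate combined with a convergence-in-probability argument.

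The step I expect to be the main obstacle is precisely the identification of the limit nonlinearity under only a general semimartingale noise: without the Burkholder--Davis--Gundy inequality one must work systematically with the M\'etivier--Pellaumail maximal inequality, and the $L^1$ weak-compactness argument for $(\xi_\lambda)$ rests delicately on $\beta$ being everywhere defined, hence on the superlinearity of $j^*$, rather than on any growth bound on $\beta$.
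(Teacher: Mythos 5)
Your plan reproduces the paper's architecture almost exactly: double Yosida regularization of $A$ and $\beta$, the It\^o formula of {\S}\ref{sec:Ito} plus the M\'etivier--Pellaumail inequality for the uniform estimates, uniform integrability of $\beta_\lambda(X_\lambda)$ via the superlinearity of $j^*$ and Lemma~\ref{valle_p_time}, Dunford--Pettis, and then localization, a fixed-point argument on a small stochastic interval, a directed system of local solutions, and non-explosion from the linear growth of $B$. The only substantive deviation is in the identification of the nonlinearity: you invoke the classical Minty trick via the energy identity ($\limsup_\lambda\int\xi_\lambda X_\lambda\leq\int\xi X$), whereas the paper works pathwise, gets strong $L^2(0,T;H)$ compactness of $X_\lambda$ from Simon's criterion, and identifies $\xi\in\beta(\cdot,X)$ through the subdifferential inequality for the Moreau--Yosida regularization $j_\lambda$ combined with Severini--Egorov and lower semicontinuity of convex integrals. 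Both routes are viable, but note that the pathwise estimates require the intermediate reduction to coefficients $G$ valued in $\cL(K,V_0)$ (assumption \eqref{eq:G0}, later removed by replacing $G$ with $T_nG$), which your outline skips.

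There is one genuine gap. All your compactness arguments are run pathwise, so the limits $X(\omega)$ and $\xi(\omega)$ are extracted along subsequences $\lambda'=\lambda'(\omega)$ depending on $\omega$; as written, the resulting family of limits need not be a measurable process at all, whereas the theorem explicitly asserts that $X$ is optional (and $\xi$ must be an adapted $L^1(D)$-valued process for the equation to make sense). The paper closes this in Step~4 of the proof of Proposition~\ref{prop:aux}: pathwise uniqueness of the limit forces convergence along the \emph{original} sequence $\lambda$, independent of $\omega$; then boundedness in $L^2(\Omega\times[0,T];V)$ and Mazur's lemma yield a sequence of convex combinations of the optional processes $X_\lambda$ converging strongly to $X$, whence optionality, with a separate duality argument for $\xi$. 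Without some version of this step your construction does not produce a solution in the sense of Definition~\ref{def:sol}.
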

\noindent
Note that since $\xi\in\beta(\cdot,X)$ we have $|\xi X|=\xi X=
j(\cdot,X)+j^*(\cdot,\xi)\geq0$,
so that Theorem~\ref{thm:1} ensures that
\[
  \xi X=j(\cdot,X) + j^*(\cdot,\xi) \in L^1((0,T)\times D) \quad\P\text{-a.s.}
\]


\ifbozza\newpage\else\fi
\section{Preliminaries and auxiliary results}
\label{sec:prel}
We recall those results from the approach to stochastic integration
developed by M\'etivier and Pellaumail that we need, referring to
\cite{Met,MetPell} for details. We also prove two additional lemmata
pertaining to this theory that are indispensable for the proofs in the
following sections.

Moreover, we provide a sufficient condition for a process to be weakly
c\`adl\`ag and a generalized version of the uniform integrability
criterion by de la Vall\'ee Poussin.

\subsection{Stochastic integration with respect to
  Hilbert-space-valued semimartingales}
\label{ssec:int}
Let $G$ be a separable Hilbert space. An $\cL(K,G)$-valued process $Y$
is elementary if there exist $n \in \enne$, sequences $(s_k)$,
$(t_k) \subseteq \erre_+$, $(F_k) \subset \cF$, and
$(u_k) \subset \cL(K,G)$, $k=1,\ldots,n$, with $s_k \leq t_k$ and
$F_k \in \cF_{s_k}$, such that
\[
  Y = \sum_{k=1}^n \mathbbm{1}_{\mathopen]s_k,t_k\mathclose] \times F_k}
  u_k.
\]
Then the stochastic integral of $Y$ with respect to $Z$ is defined as
\[
  \bigl( Y \cdot Z \bigr)_t := \sum_{k=1}^n \mathbbm{1}_{F_k}
  u_k \bigl( Z_{t_k \wedge t} - Z_{s_k \wedge t} \bigr)
  \qquad \forall t \in \erre_+.
\]
\begin{defi}
  A positive increasing adapted process $C$ is called a control
  process for $Z$ if, for every separable Hilbert space $G$, for every
  elementary $\cL(K,G)$-valued process $Y$, and for every stopping
  time $\tau$, one has
  \[
    \E\sup_{t<\tau} \norm[\big]{(Y \cdot Z)_t}_G^2
    \leq \E C_{\tau-} \int_{\mathopen]0,\tau\mathclose[} %
    \norm[\big]{Y(s)}^2_{\cL(K,G)}\,dC(s).
  \]
\end{defi}

It turns out that an adapted c\`adl\`ag $K$-valued process is a
semimartingale if and only if it admits a control process. In
particular, the set of control processes for a semimartingale $Z$,
that we shall denote by $\cC(Z)$, is not empty. One can also show (see
\cite[Theorems.~23.9--23.14]{Met}) that, writing $Z=M+V$, with $M$
locally square integrable local martingale and $V$ a finite-variation
process, a control process is given by
\[
  C=8\bigl( \ip{M}{M} + [\check{M},\check{M}] \bigr)
  + 2\bigl( 2\vee\abs{V} \bigr),
\]
where $\ip{M}{M}$ is the predictable quadratic variation of $M$,
$\abs{V}$ is the variation of $V$, and $[\check{M},\check{M}]$ is the
quadratic variation of the pure-jump martingale part of $M$, in the
sense of \cite[Definition~19.3]{Met}.

We need to introduce some notation: for any control process $C$ and
any strongly measurable adapted process $Y$ with values in
$\cL(K,G)$, let us define the process $\lambda^C(Y)$ as
\[
  \lambda_t^C(Y) := C_t \int_0^t \norm[\big]{Y(s)}^2_{\cL(K,G)}\,dC(s)
  \qquad \forall t \in \erre_+.
\]
For any stopping time $\tau$, let us define the measure $m^Z_\tau$ on
the predictable $\sigma$-algebra as
\[
  m^Z_\tau: P \longmapsto \E C_{\tau-}
  \bigl( \mathbbm{1}_P \cdot C \bigr)_{\tau-},
\]
and note that $m^Z_\tau$ is finite if
$\E\abs{C_{\tau-}}^2<\infty$. The space of strongly predictable
processes $Y$ with values in $\cL(K,G)$ such that
$\E\lambda^C_{\tau-}(Y)$ is finite coincides with the Bochner $L^2$
space with respect to the measure $m^Z_\tau$ and values in $\cL(K,G)$,
with norm
\[
  \norm[\big]{Y}_{L^2(m^Z_\tau)} = \bigl( \E\lambda^C_{\tau-}(Y) \bigr)^{1/2}
  = \Bigl( \E C_{\tau-} \bigl( \norm{Y}^2 \cdot C \bigr)_{\tau-} \Bigr)^{1/2},
\]
where the norm of $Y$ is taken in $\cL(K,G)$ (see \cite[\S~24.1,
Lemmata~1--3, and \S~26.1]{Met}). Denoting the Banach space of adapted
c\`adl\`ag processes $S$ with values in $G$ such that
$\E S^{*2}<\infty$ by $\mathbb{S}^2$, with norm
$\norm{S}_{S^2}:=(\E S^{*2})^{1/2}$, the inequality in the definition
of control process can thus be written as
\[
  \norm[\big]{(Y \cdot Z)^{\tau-}}_{\mathbb{S}^2}
  \leq \norm[\big]{Y}_{L^2(m^Z_\tau)}.
\]
The first step in the construction of the stochastic integral for more
general integrands is as follows: suppose that there exists a stopping
time $\tau$ such that $\E\abs{C_{\tau-}}^2 <\infty$, so that
$m^Z_\tau$ is a finite measure and the vector space of elementary
processes is dense in $L^2(m^Z_\tau)$. Then the mapping
$Y \mapsto (Y \cdot Z)^{\tau-}$, initially defined on elementary
processes, admits a unique extension to a linear continuous map from
$L^2(m^Z_\tau)$ to $\mathbb{S}^2$.
As a second step, assume that $C$ is a control process for $Z$ and $Y$
is a process with values in $\cL(K,G)$ such that the process
$\lambda^C(Y)$ is finite, and introduce the sequence
of stopping times $(\tau_n)$ defined as
\[
\tau_n := \inf \bigl\{ t \geq 0:\, C_t \wedge \lambda^C_t(Y) \geq n \bigr\},
\]
so that $\E\abs{C_{\tau_n-}}^2 <\infty$ as well as
$\E\lambda^C_{\tau_n-}(Y) < \infty$, i.e. $Y \in
L^2(m^Z_{\tau_n})$. Then, by the previous step, one has
$(Y \cdot Z)^{\tau_n-} \in \mathbb{S}^2$ for all $n \in \enne$. Since
$\tau_n$ increases to $\infty$ as $n \to \infty$ and it is not
difficult to show that $(Y \cdot Z)^{\tau_n-} = (Y \cdot Z)^{\tau_m-}$
on $\co{0}{\tau_n \wedge \tau_m}$ for all $n$, $m \in \enne$, one has
a well-defined process $Y \cdot Z$. One then shows that such a process
does not depend on the sequence $(\tau_n)$. However, it may still
depend on the control process $C$. A final step shows that if $Y$
admits two control processes $C_1$ and $C_2$ such that the processes
$\lambda^{C_1}(Y)$ and $\lambda^{C_2}(Y)$ are finite, then the
stochastic integrals constructed in the two possible ways coincide.
The following definition is therefore meaningful.
\begin{defi}
  A strongly predictable $\cL(K,G)$-valued process $Y$ is integrable
  with respect to $Z$ if there exists a control process $C$ for $Z$
  such that the process $\lambda^C(Y)$ is finite.
\end{defi}

We shall occasionally use the symbol $\cS_C(Z)$ to denote the set of
strongly predictable $\cL(K,G)$-valued processes $Y$ such that the
process $\lambda^C(Y)$ is finite.

Note that the construction of $Y \cdot Z$ implies that the inequality
in the definition of control processes can be extended as follows: for
every $C \in \cC(Z)$, $Y \in \cS_C(Z)$, and stopping time $\tau$, one has
\[
  \norm[\big]{(Y \cdot Z)^{\tau-}}_{\mathbb{S}^2} \leq
  \Bigl( \E C_{\tau-} \bigl( \norm{Y}^2 \cdot C \bigr)_{\tau-} \Bigr)^{1/2}.
\]

We shall need a further maximal inequality for stochastic integrals
with respect to a semimartingale, whose proof relies on the following
deep inequality (see~\cite[Lemma~1.3]{LeLePr}).
\begin{lemma}
  Let $X$ be a positive real-valued measurable process and $A$ an increasing
  predictable process such that, for every finite stopping time
  $\sigma$,
  \[
    \E \mathbbm{1}_{\{\sigma>0\}} X(\sigma) \leq %
    a \E \mathbbm{1}_{\{\sigma>0\}} A(\sigma)
  \]
  for a constant $a>0$. Then for every concave function
  $F:\erre_+ \to \erre$ and every finite stopping time $\tau$ one has
  \[
    \E \mathbbm{1}_{\{\tau>0\}} F(X(\tau)) \leq %
    (a+1) \E \mathbbm{1}_{\{\tau>0\}} F(A(\tau)).
  \]
\end{lemma}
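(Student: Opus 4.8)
The plan is to reduce the statement, via the structure of concave functions, to an elementary inequality for the ``tent'' functions $y\mapsto y\wedge s$, and to prove the latter by a stopping-time argument that exploits the predictability of $A$. Throughout we may assume $F$ nondecreasing: this is automatic whenever $F$ is bounded below, which covers the case $F\geq0$ of interest (for a genuinely decreasing concave $F$ the asserted inequality is false, as the example $X\equiv0$, $A\equiv1$, $F(y)=-y$ shows). Writing $g:=F'$ for the nonincreasing right derivative and $\nu:=-dg$ for the associated nonnegative measure on $(0,\infty)$, we have $g(c)=\kappa+\nu\bigl((c,\infty)\bigr)$ with $\kappa:=\lim_{y\to\infty}g(y)\geq0$, whence the representation
\[
  F(y)=F(0)+\kappa\,y+\int_{(0,\infty)}(y\wedge s)\,\nu(ds),\qquad y\geq0.
\]

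Inserting this into $\E\bigl[\mathbbm{1}_{\{\tau>0\}}F(X_\tau)\bigr]$ and using Tonelli's theorem (all terms are nonnegative once $F\geq0$), the claim splits into three contributions, each of which I bound by $(a+1)$ times the corresponding term with $A$ in place of $X$. The constant term $F(0)\,\P(\tau>0)$ is handled by $a+1\geq1$; the linear term obeys $\kappa\,\E\bigl[\mathbbm{1}_{\{\tau>0\}}X_\tau\bigr]\leq a\kappa\,\E\bigl[\mathbbm{1}_{\{\tau>0\}}A_\tau\bigr]$, which is the hypothesis applied to $\sigma=\tau$; and it remains to treat the building blocks, i.e. to prove, for each fixed $s>0$, the inequality
\[
  \E\bigl[\mathbbm{1}_{\{\tau>0\}}(X_\tau\wedge s)\bigr]\leq(a+1)\,\E\bigl[\mathbbm{1}_{\{\tau>0\}}(A_\tau\wedge s)\bigr].
\]
Summing the three estimates (integrating the last against $\nu(ds)$) reconstitutes $(a+1)\,\E\bigl[\mathbbm{1}_{\{\tau>0\}}F(A_\tau)\bigr]$.

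To prove the building-block inequality I split $\{\tau>0\}$ along $\{A_\tau\geq s\}$ and $\{A_\tau<s\}$. On $\{A_\tau\geq s\}$ one has $X_\tau\wedge s\leq s=A_\tau\wedge s$, so this part contributes at most $s\,\P\bigl(\{\tau>0,\,A_\tau\geq s\}\bigr)$. On $\{A_\tau<s\}$ one bounds $X_\tau\wedge s\leq X_\tau$, and the task is to estimate $\E\bigl[\mathbbm{1}_{\{0<\tau,\,A_\tau<s\}}X_\tau\bigr]$. Here predictability enters: setting $S:=\inf\{t\geq0:A_t\geq s\}$ one has $\{A_\tau<s\}=\{\tau<S\}$, and since $A$ is predictable and increasing, $S$ is a predictable stopping time admitting an announcing sequence $S_n\uparrow S$ with $S_n<S$, so that $A_{S_n}<s$. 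Applying the domination hypothesis to the finite stopping times $\sigma_n:=\tau\wedge S_n$, discarding from the left-hand side the nonnegative term carried by $\{S\leq\tau\}$, and letting $n\to\infty$ (monotone convergence on the left, where $\mathbbm{1}_{\{S_n>\tau\}}\uparrow\mathbbm{1}_{\{\tau<S\}}$, and monotone convergence of $A_{\sigma_n}$ on the right, where $A_{S_n}<s$ controls the overshoot) yields
\[
  \E\bigl[\mathbbm{1}_{\{0<\tau<S\}}X_\tau\bigr]\leq a\,\E\bigl[\mathbbm{1}_{\{0<\tau<S\}}A_\tau\bigr]+a\,s\,\P\bigl(\{0<S\leq\tau\}\bigr).
\]
Combining this with the estimate on $\{A_\tau\geq s\}$, and using $A_\tau\wedge s=A_\tau$ on $\{A_\tau<s\}$ and $A_\tau\wedge s=s$ on $\{A_\tau\geq s\}$, the two overshoot contributions assemble into $(a+1)\,\E\bigl[\mathbbm{1}_{\{\tau>0\}}(A_\tau\wedge s)\bigr]$, giving the building-block inequality.

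The main obstacle is precisely this last step, namely controlling the value of $A$ at the level $s$: without predictability the crossing time $S$ could be attained by a jump, with $A_S$ arbitrarily larger than $s$, and the overshoot term would be uncontrolled. It is the announcing sequence furnished by predictability that keeps the relevant evaluations of $A$ strictly below $s$ and produces the clean constant $a+1$. The remaining ingredients — the integral representation of concave functions, Tonelli's theorem, and the reduction to tent functions — are routine.
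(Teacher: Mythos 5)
Your argument is correct, but there is nothing in the paper to compare it against: this lemma is stated without proof and attributed to Lenglart--L\'epingle--Pratelli (the text says its role is as an ingredient, ``see [LLP, Lemma~1.3]''). What you have written is essentially the classical proof of that cited result: the integral representation $F(y)=F(0)+\kappa y+\int_{(0,\infty)}(y\wedge s)\,\nu(ds)$ reducing matters to tent functions, the split along $\{A_\tau\geq s\}=\{S\leq\tau\}$ with $S$ the d\'ebut of $\{A\geq s\}$, and the announcing sequence $S_n\uparrow S$ (available because $A$ is predictable and increasing, so $[\![S,\infty[\![=\{A\geq s\}$ is a predictable set and $S$ a predictable time) to keep $A_{\sigma_n}\leq s$ on the overshoot event --- all the steps check out, including the final reassembly $a\E\,\mathbbm{1}_{\{0<\tau<S\}}(A_\tau\wedge s)+as\,\P(0<S\leq\tau)+s\,\P(\tau>0,\,A_\tau\geq s)\leq(a+1)\E\,\mathbbm{1}_{\{\tau>0\}}(A_\tau\wedge s)$. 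Your observation that the statement as transcribed is too generous --- it fails for decreasing concave $F$, and in fact also for nondecreasing concave $F$ with $F(0)<0$, so one really needs $F$ nonnegative (hence automatically nondecreasing) --- is a genuine and worthwhile correction; the only use made of the lemma in the paper is with $F(r)=\sqrt{r}$, which satisfies this. The one hypothesis you use tacitly is right-continuity of $A$ (needed for $\{A_\tau<s\}=\{\tau<S\}$ and for $[\![S,\infty[\![=\{A\geq s\}$), which is the standard convention for increasing processes in this framework and is satisfied by the process $C_-(\norm{Y}^2\cdot C)_-$... actually by its left-continuous modification; in the paper's application the dominating process is left-continuous increasing, for which the same d\'ebut argument goes through with the obvious one-sided adjustments.
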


Let $C$ be a control process for $Z$ and $Y \in \cS_C(Z)$, so that
\[
  \E \bigl( Y \cdot Z \bigr)_{\sigma-}^{*2} \leq
  \E C_{\sigma-} \bigl( \norm{Y}^2 \cdot C \bigr)_{\sigma-}.
\]
Since the process $C_{-} \bigl( \norm{Y}^2 \cdot C \bigr)_{-}$ is
left-continuous, hence predictable, the previous lemma yields, taking
$F(r)=\sqrt r$, $r\geq0$,
\[
\E \bigl( Y \cdot Z \bigr)_{\tau-}^{*} \leq 2
\E \bigl( C_{\tau-} \bigl( \norm{Y}^2 \cdot C \bigr)_{\tau-} \bigr)^{1/2}.
\]

\medskip

The following elementary lemma is essential in the last section.
\begin{lemma}
  \label{lm:stopped}
  Let $C$ be a control process for the semimartingale $Z$ and $\tau$ a
  stopping time. Then $C^{\tau-}$ is a control process for the
  semimartingale $Z^{\tau-}$.
\end{lemma}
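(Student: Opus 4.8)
The plan is to verify directly that $C^{\tau-}$ meets the definition of a control process for $Z^{\tau-}$. That $C^{\tau-}$ is positive, increasing and adapted is inherited from $C$, since pre-stopping at $\tau$ preserves all three properties; and $Z^{\tau-}$ is itself a semimartingale, being the difference of the stopped semimartingale $Z^\tau$ and the adapted finite-variation process $\Delta Z_\tau\,\mathbbm{1}_{\co{\tau}{\infty}}$ carrying the single jump at $\tau$. Thus the whole content is the control inequality
\[
  \E \sup_{t<\sigma}\norm[\big]{(Y\cdot Z^{\tau-})_t}_G^2 \leq
  \E (C^{\tau-})_{\sigma-}\int_{\mathopen]0,\sigma\mathclose[}
  \norm[\big]{Y(s)}^2_{\cL(K,G)}\,d(C^{\tau-})(s),
\]
for every separable Hilbert space $G$, every elementary $\cL(K,G)$-valued process $Y$, and every stopping time $\sigma$, which I would reduce to the inequality already known for $(C,Z)$, but evaluated at the truncated time $\sigma\wedge\tau$.

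The first step is the integration-by-stopping identity $Y\cdot Z^{\tau-}=(Y\cdot Z)^{\tau-}$ for every elementary $Y$. This I would check on a single block $Y=\mathbbm{1}_{\oc{s}{t}\times F}\,u$, with $F\in\cF_s$, by distinguishing the three cases $\tau\leq s$, $s<\tau\leq t$, and $t<\tau$: in each case the increment $(Z^{\tau-})_{t\wedge\cdot}-(Z^{\tau-})_{s\wedge\cdot}$ evaluated after time $\tau$ coincides exactly with the frozen left-limit value of $Y\cdot Z$ at $\tau$, so the two processes agree, and linearity extends this to all elementary $Y$.

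The second step assembles three elementary identities, all resting on the fact that pre-stopping freezes a process at its left limit at $\tau$ and hence discards any jump carried at the time $\tau$ itself. Writing $W:=Y\cdot Z$, continuity of the norm together with this freezing give
\[
  \sup_{t<\sigma}\norm[\big]{W^{\tau-}_t}_G^2
  = \sup_{t<\sigma\wedge\tau}\norm[\big]{W_t}_G^2,
  \qquad (C^{\tau-})_{\sigma-} = C_{(\sigma\wedge\tau)-},
\]
while the Stieltjes measure $d(C^{\tau-})$ agrees with $dC$ on $\mathopen]0,\tau\mathclose[$ and places no mass at or beyond $\tau$, whence
\[
  \int_{\mathopen]0,\sigma\mathclose[}\norm[\big]{Y(s)}^2_{\cL(K,G)}\,d(C^{\tau-})(s)
  = \int_{\mathopen]0,\sigma\wedge\tau\mathclose[}\norm[\big]{Y(s)}^2_{\cL(K,G)}\,dC(s).
\]
Combining these with the first step and then invoking the control inequality for $(C,Z)$ at the stopping time $\sigma\wedge\tau$ yields the claimed inequality for $(C^{\tau-},Z^{\tau-})$.

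The main obstacle is purely the bookkeeping of left limits and half-open intervals: one must ensure that the exclusion of the jump of $C$ (and of $Z$) at $\tau$ effected by pre-stopping is matched on the other side by the strict truncation at $\sigma\wedge\tau$, so that no spurious mass at $\{\tau\}$ and no discrepancy between $W_{\tau-}$ and $W_\tau$ is either double-counted or lost. Once the case analysis of the first step and the measure identity are pinned down, what remains is a single application of the hypothesis and requires no further computation.
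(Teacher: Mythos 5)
Your proposal is correct and follows essentially the same route as the paper: both rest on the identity $Y\cdot Z^{\tau-}=(Y\cdot Z)^{\tau-}$ for elementary integrands, the resulting equality of pre-stopped suprema $\bigl(Y\cdot Z^{\tau-}\bigr)^*_{\sigma-}=\bigl(Y\cdot Z\bigr)^*_{(\sigma\wedge\tau)-}$, and the bookkeeping identities $C^{\tau-}_{\sigma-}=C_{(\sigma\wedge\tau)-}$ and $\bigl(\norm{Y}^2\cdot C^{\tau-}\bigr)_{\sigma-}=\bigl(\norm{Y}^2\cdot C\bigr)_{(\sigma\wedge\tau)-}$, followed by one application of the control inequality for $(C,Z)$ at $\sigma\wedge\tau$. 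Yours simply spells out the single-block case analysis and the Stieltjes-measure identity that the paper leaves implicit.
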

\begin{proof}
  For every elementary $\cL(K,G)$-valued process $Y$ and every
  stopping time $\sigma$ one has
  $Y \cdot Z^{\tau-}=(Y \cdot Z)^{\tau-}$, hence also
  \[
    \bigl( Y \cdot Z^{\tau-} \bigr)^{*}_{\sigma-} =
    \bigl( Y \cdot Z \bigr)^*_{(\sigma \wedge \tau)-},
  \]
  which in turns implies
  \[
    \E\bigl( Y \cdot Z^{\tau-} \bigr)^{*2}_{\sigma-}
    = \E \bigl( Y \cdot Z \bigr)^{*2}_{(\sigma \wedge \tau)-}
      \leq \E C_{(\sigma \wedge \tau)-}%
      \bigl( \norm{Y}^2 \cdot C \bigr)_{(\sigma \wedge \tau)-},
  \]
  where $C_{(\sigma \wedge \tau)-} = C^{\tau-}_{\sigma-}$ and
  $\bigl( \norm{Y}^2 \cdot C \bigr)_{(\sigma \wedge \tau)-} = \bigl(
  \norm{Y}^2 \cdot C^{\tau-} \bigr)_{\sigma-}$.
\end{proof}

\medskip

We also recall the following version of the dominated convergence
theorem for stochastic integrals with respect to semimartingales
(cf.~\cite[Theorem~26.3]{Met}).
\begin{prop}
  \label{prop:dom1}
  Let $(X_n)_{n\in\enne}$, $X$ be predictable $\cL(K,H)$-valued
  processes such that $X_n \to X$ in $\cL(K,H)$ a.e. in
  $\Omega \times [0,T]$. If there exists a control process $C$ for $Z$
  and $\phi \in \cS_C(Z)$ such that
  \[
    \norm[\big]{X_n}_{\cL(K,H)}\leq \norm[\big]{\phi}_{\cL(K,H)}
    \qquad\forall n \in \enne,
  \]
  then $X_n \in \cS_C(Z)$ for every $n \in \enne$, $X \in \cS_C(Z)$, and 
  \[
    \bigl( X_n \cdot Z - X \cdot Z \bigr)^*_t \longto 0
  \]
  in probability for every $t \in [0,T]$.
\end{prop}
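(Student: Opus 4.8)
The plan is to reduce the statement to a pathwise dominated convergence argument combined with the maximal inequality
\[
  \E \bigl( Y \cdot Z \bigr)_{\tau-}^{*} \leq 2\,
  \E \bigl( C_{\tau-} \bigl( \norm{Y}^2 \cdot C \bigr)_{\tau-} \bigr)^{1/2}
\]
established above, after localizing so that all the quantities involved become integrable. First I would settle the membership claims. Since $\norm{X_n}_{\cL(K,H)} \leq \norm{\phi}_{\cL(K,H)}$ for every $n$ and $X_n \to X$ a.e., the bound $\norm{X}_{\cL(K,H)} \leq \norm{\phi}_{\cL(K,H)}$ holds a.e.\ as well; consequently $\lambda^C_t(X_n) \leq \lambda^C_t(\phi)$ and $\lambda^C_t(X) \leq \lambda^C_t(\phi)$ for every $t$, and since $\lambda^C(\phi)$ is finite, both $X_n$ and $X$ lie in $\cS_C(Z)$, all with the common control process $C$. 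By linearity of the integral on $\cS_C(Z)$ it then suffices to prove that $Y_n := X_n - X \in \cS_C(Z)$ satisfies $(Y_n \cdot Z)^*_t \to 0$ in probability, observing that $\norm{Y_n}_{\cL(K,H)} \leq 2\norm{\phi}_{\cL(K,H)}$ and $Y_n \to 0$ a.e.

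Next I would localize using the stopping times $\tau_m := \inf\{t \geq 0: C_t \wedge \lambda^C_t(\phi) \geq m\}$ introduced above (taking $Y = \phi$), for which $\tau_m \uparrow \infty$, $\E C_{\tau_m-}^2 < \infty$, and $\E \lambda^C_{\tau_m-}(\phi) < \infty$. The maximal inequality applied to $Y_n$ yields, for each fixed $m$,
\[
  \E (Y_n \cdot Z)^*_{\tau_m-} \leq 2\,
  \E \bigl( C_{\tau_m-} ( \norm{Y_n}^2 \cdot C )_{\tau_m-} \bigr)^{1/2}.
\]
The integrand on the right is dominated by $2(\lambda^C_{\tau_m-}(\phi))^{1/2}$, which is $\P$-integrable since $\lambda^C_{\tau_m-}(\phi) \in L^1(\P)$ and $\E(\lambda^C_{\tau_m-}(\phi))^{1/2} \leq (\E\lambda^C_{\tau_m-}(\phi))^{1/2}$ by Jensen's inequality.

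For the pointwise convergence I would argue pathwise. For $\P$-a.e.\ $\omega$, the map $s \mapsto \norm{Y_n(s,\omega)}^2$ converges to $0$ for $dC(\cdot,\omega)$-a.e.\ $s$ and is dominated by the $dC(\cdot,\omega)$-integrable function $4\norm{\phi(\cdot,\omega)}^2$ on $[0,\tau_m(\omega))$, so the ordinary dominated convergence theorem for the finite measure $dC(\cdot,\omega)$ gives $(\norm{Y_n}^2 \cdot C)_{\tau_m-}(\omega) \to 0$; multiplying by the finite factor $C_{\tau_m-}(\omega)$ preserves this. A second application of dominated convergence, now over $\Omega$, yields $\E (Y_n \cdot Z)^*_{\tau_m-} \to 0$ as $n \to \infty$, for every fixed $m$. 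To pass to convergence in probability on $[0,T]$, fix $t \in [0,T]$, $\eps > 0$ and $\delta > 0$; since $\tau_m \uparrow \infty$, choose $m$ with $\P(\tau_m \leq t) < \delta$. On $\{\tau_m > t\}$ one has $(Y_n \cdot Z)^*_t \leq (Y_n \cdot Z)^*_{\tau_m-}$, so Markov's inequality gives
\[
  \P \bigl( (Y_n \cdot Z)^*_t > \eps \bigr)
  \leq \P(\tau_m \leq t) + \eps^{-1} \E (Y_n \cdot Z)^*_{\tau_m-},
\]
and letting $n \to \infty$ produces $\limsup_n \P((Y_n \cdot Z)^*_t > \eps) \leq \delta$; as $\delta$ is arbitrary, the convergence in probability follows.

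The hard part will be the pathwise dominated convergence step: it is essential that the a.e.\ convergence hypothesis be read with respect to the Dol\'eans-type measure $d\P\,dC$ (equivalently, $dC$-a.e.\ along $\P$-a.e.\ path) rather than with respect to $d\P \otimes dt$, since the measure $dC$ may charge the jump times of $C$ and the latter notion would fail to control the behaviour of $Y_n$ there. Once this is granted, the two dominated convergence passages and the localization $\tau_m \uparrow \infty$ are routine, and the only additional care needed is to keep all integrands tied to the single control process $C$ so that the linearity of $Y \mapsto Y \cdot Z$ may be invoked in the reduction to $Y_n$.
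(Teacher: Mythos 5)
The paper does not prove this proposition at all: it is recalled verbatim from M\'etivier's monograph (Theorem~26.3 there), so your argument cannot be compared with an in-paper proof, only judged on its own merits. On those terms it is correct and is essentially the canonical way to derive the result from the machinery the paper has already set up: the membership claims follow from the pointwise domination exactly as you say; the reduction to $Y_n:=X_n-X$ by linearity is legitimate because all integrands share the single control process $C$; the localization by $\tau_m$ and the $L^1$ maximal inequality $\E(Y_n\cdot Z)^*_{\tau_m-}\leq 2\,\E\bigl(C_{\tau_m-}(\norm{Y_n}^2\cdot C)_{\tau_m-}\bigr)^{1/2}$ are precisely the tools prepared in {\S}4.1; and the two-layer dominated convergence plus the Markov/localization step to pass from $L^1$ convergence up to $\tau_m-$ to convergence in probability at fixed $t$ is sound.

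The one substantive point is the caveat you raise yourself, and you are right to insist on it: with the hypothesis read literally as convergence $\P\otimes dt$-a.e., the proposition is false. For instance, with $Z=\mathbbm{1}_{[t_0,\infty)}$, $X_n=\mathbbm{1}_{\{t_0\}}$ and $X=0$ one has $X_n\to X$ Lebesgue-a.e.\ (with $\phi\equiv 1$ as dominating process), yet $X_n\cdot Z=\mathbbm{1}_{[t_0,\infty)}$ does not converge to $X\cdot Z=0$. So your pathwise dominated convergence step genuinely requires the convergence to hold $dC(\cdot,\omega)$-a.e.\ for $\P$-a.e.\ $\omega$ (equivalently, everywhere off a $d\P\,dC$-null set), which is the form of the hypothesis in M\'etivier's original statement and is what is actually available in the paper's applications of the proposition (there the convergence holds for every $t$, $\P$-a.s.). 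With that reading fixed, your proof is complete; without it, no proof can exist because the statement fails.
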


\medskip

Finally, we recall, for the reader's convenience, the following
stochastic version of Gronwall's lemma (cf.~\cite[Lemma~29.1]{Met}).
\begin{lemma}
  \label{lm:gron}
  Let $A$ be an adapted,
  right-continuous, increasing, positive process defined on a
  stochastic interval $[\![0,\tau[\![$, with
  $\ell:=\sup_{t<\tau}A(t)\in\erre_+$.  Let also $\phi$ be a real,
  increasing, adapted process such that, for every stopping time
  $\sigma\leq\tau$,
  \[
    \E\phi(\sigma-)\leq a + b \E\int_0^{\sigma-}\phi(s-)\,dA(s)
  \]
  for certain constants $a,b\in\erre$. Then,
  \[
    \E\phi(\tau-)\leq a\sum_{k=0}^{[2b\ell]}(2b\ell)^k.
  \]
\end{lemma}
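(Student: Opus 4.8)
The plan is to prove this by a stochastic adaptation of the classical iteration behind Gronwall's lemma. The crucial difficulty is that the assumed inequality holds only in expectation and only when tested against stopping times, not pathwise, so one cannot substitute the bound for $\phi$ into the integral $\int_0^{\sigma-}\phi(s-)\,dA(s)$ along trajectories as in the deterministic case. To get around this I would discretize not in time but along the values of the increasing process $A$ itself. Assuming $b>0$ (the case $b\le0$ being trivial, as the drift integral is then harmless), set the level-crossing stopping times
\[
  \sigma_j := \inf\{t\ge0:\,A(t)\ge j/(2b)\}\wedge\tau, \qquad j=0,1,2,\dots,
\]
so that $\sigma_0=0$, the $\sigma_j$ are nondecreasing, and, because $A(t)\le\ell$ for $t<\tau$, one has $\sigma_j=\tau$ as soon as $j/(2b)>\ell$, i.e. for all $j\ge N:=[2b\ell]+1$. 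Thus only $N$ genuine ``slabs'' $[\sigma_{j-1},\sigma_j)$ occur before $\tau$. Writing $g_j:=\E\phi(\sigma_j-)$, the quantities $g_j$ are nondecreasing (both $\phi$ and $\sigma_j$ increase) and $g_N=\E\phi(\tau-)$ is the object to be bounded; to legitimate all manipulations I would first truncate $\phi$ by $\phi\wedge n$, derive the estimate uniformly in $n$, and recover the claim by monotone convergence.

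The heart of the matter is to turn the hypothesis into a closed recursion for the $g_j$. Testing the inequality at $\sigma_j$ and decomposing $[0,\sigma_j)=\bigsqcup_{i=1}^{j}[\sigma_{i-1},\sigma_i)$, I would use the monotonicity of $\phi$ to replace $\phi(s-)$ by $\phi(\sigma_i-)$ on the $i$-th slab, so that
\[
  g_j \le a + b\sum_{i=1}^{j}\E\bigl[\phi(\sigma_i-)\,\bigl(A(\sigma_i-)-A(\sigma_{i-1}-)\bigr)\bigr].
\]
By construction each slab carries $A$-mass of order $1/(2b)$: since $A(\sigma_i-)\le i/(2b)$ and, on a nondegenerate slab, $A(\sigma_{i-1}-)\ge(i-2)/(2b)$, the factor $b\,(A(\sigma_i-)-A(\sigma_{i-1}-))$ is bounded by an absolute constant, so that the $A$-increment can be pulled out of the expectation and each summand is controlled by $g_i$. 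This is exactly the point where discretizing along $A$ pays off: the random increment becomes deterministically bounded, which is what the expectation-form hypothesis does not allow directly. One is thereby left with a linear recursion for the sequence $(g_j)$ that couples $g_j$ to its predecessors; its solution grows geometrically over the $N=[2b\ell]+1$ slabs, and collecting the terms reproduces a bound of the stated geometric type $a\sum_{k=0}^{[2b\ell]}(2b\ell)^k$.

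The main obstacle is the treatment of the jumps of $A$. A single jump at some time $t_0$ may cross several of the levels $j/(2b)$ at once, forcing a whole block of the $\sigma_j$ to coincide and concentrating in one slab an $A$-mass larger than $1/(2b)$, so that the naive per-slab bound above breaks down. The remedy is to observe that the integral $\int\phi(s-)\,dA(s)$ weights such a jump by the left limit $\phi(t_0-)$, i.e. by the value of $\phi$ strictly before the jump, which is governed by the $g_i$ attached to the level reached immediately before $t_0$; since the total number of levels crossed, jumps included, is still at most $N$, the bookkeeping closes with the same number of effective steps and the geometric estimate survives. I would carry out this accounting carefully using the half-open slabs $[\sigma_{i-1},\sigma_i)$ and the elementary identity $\int_{[\sigma_{i-1},\sigma_i)}dA=A(\sigma_i-)-A(\sigma_{i-1}-)$, which assigns each jump to a single slab and keeps $\sum_i\bigl(A(\sigma_i-)-A(\sigma_{i-1}-)\bigr)=A(\tau-)\le\ell$ under control.
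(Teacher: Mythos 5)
First, note that the paper does not prove this lemma at all: it is recalled verbatim from M\'etivier \cite[Lemma~29.1]{Met}, so there is no in-paper proof to compare against. Your overall strategy --- discretizing along the level sets of $A$ at multiples of $1/(2b)$, reducing to $N=[2b\ell]+1$ slabs, and iterating --- is indeed the standard route to this result. However, the argument as written has a genuine gap exactly at the step that carries the whole proof. Your claim that on a nondegenerate slab one has $A(\sigma_{i-1}-)\ge (i-2)/(2b)$, so that $b\bigl(A(\sigma_i-)-A(\sigma_{i-1}-)\bigr)$ is bounded by an absolute constant, is false: take $A$ with a single jump at $t_0$ from $0$ to $\ell$. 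Then $\sigma_1=\dots=\sigma_{[2b\ell]}=t_0$, the slab $[\sigma_{[2b\ell]},\sigma_{[2b\ell]+1})=[t_0,\tau)$ is nondegenerate, yet $A(\sigma_{[2b\ell]}-)=0$ and the slab carries $A$-mass $\ell$, so the factor is $b\ell$, not an absolute constant. This is not a corner case one can wave away: the lemma is applied in the paper to control processes of general semimartingales, which jump, and the ratio $2b\ell$ in the stated bound is precisely the footprint of the jump contribution. An ``absolute constant per slab'' recursion of the form $g_j\le a+c\sum_{i\le j}g_i$ would produce a geometric bound with an absolute ratio, which cannot reproduce (and in general is not implied by) the stated estimate.

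Your third paragraph correctly identifies both the problem and the right idea for the fix (the integral weights a jump at $t_0$ by $\phi(t_0-)$, i.e.\ by the $g$-value of the level reached \emph{before} the jump), but the ``bookkeeping closes'' assertion is never substantiated, and this is where the actual work lies. The recursion that does close is obtained by splitting $\int_{[0,\sigma_j)}=\int_{[0,\sigma_{j-1}]}+\int_{(\sigma_{j-1},\sigma_j)}$: the first piece is bounded by $\phi(\sigma_{j-1}-)\,A(\sigma_{j-1})\le \ell\,\phi(\sigma_{j-1}-)$ (left limits of $\phi$ throughout, including at the jump of $A$ at $\sigma_{j-1}$), while the open piece carries $A$-mass at most $A(\sigma_j-)-A(\sigma_{j-1})\le 1/(2b)$ by right-continuity of $A$, yielding something like $g_j\le a+b\ell\,g_{j-1}+\tfrac12 g_j$ and hence a two-term recursion with ratio of order $b\ell$; iterating over the $N$ levels gives the geometric sum. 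None of this appears in your sketch. Two further loose ends: subtracting $\tfrac12 g_j$ requires knowing $g_j<\infty$ a priori, and your proposed truncation $\phi\wedge n$ does not obviously inherit the hypothesis, since the right-hand side of the assumed inequality still involves the untruncated $\phi$; and even in the jump-free case your recursion yields a bound of a different form from the one claimed. In short: right skeleton, but the key per-slab estimate is wrong as stated and the repair is not carried out.
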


\subsection{Weak right-continuity of vector-valued functions}
Throughout this section $E$ and $F$ denote two Banach spaces, with
$E$ reflexive, densely and continuously embedded in $F$.
A classical result by Strauss (see \cite{Strauss}) states that
\[
  L^\infty(0,T;E) \cap C_w([0,T];F) = C_w([0,T];E).
\]
We are going to show that the result continues to hold replacing the
spaces of weakly continuous functions by spaces of weakly c\`adl\`ag
functions.
\begin{lemma}
  \label{lm:strauss}
  One has
  \[
  L^\infty(0,T; E)\cap D_w([0,T]; F) = D_w([0,T]; E).
  \]
\end{lemma}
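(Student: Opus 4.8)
The plan is to follow the strategy of Strauss's classical argument, replacing two-sided limits by one-sided ones and exploiting the reflexivity of $E$ throughout. The inclusion $D_w([0,T];E) \subseteq L^\infty(0,T;E) \cap D_w([0,T];F)$ is the easy one. If $f$ is weakly c\`adl\`ag with values in $E$, then for each $\phi \in F'$ its restriction $\phi|_E$ belongs to $E'$ (because $E \embed F$ continuously), so $t \mapsto \ip{\phi}{f(t)}$ is c\`adl\`ag, whence $f \in D_w([0,T];F)$. Moreover, for each $\psi \in E'$ the scalar c\`adl\`ag function $t \mapsto \ip{\psi}{f(t)}$ is bounded on the compact interval $[0,T]$, so the family $\{f(t)\}_{t\in[0,T]} \subset E \simeq E''$ is pointwise bounded on $E'$; by Banach--Steinhaus it is bounded in $E$, and therefore $f \in L^\infty(0,T;E)$.

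For the reverse, nontrivial inclusion I would fix $f \in L^\infty(0,T;E)\cap D_w([0,T];F)$ and set $M := \operatorname{ess\,sup}_t \norm{f(t)}_E$, so that $\norm{f(t)}_E \leq M$ for all $t$ outside a null set $N$. The first step is to show that $f(t)$ actually lies in $E$, with $\norm{f(t)}_E \leq M$, for \emph{every} $t \in [0,T]$. Given $t_0 \in [0,T)$, I would choose $s_n \downarrow t_0$ with $s_n \notin N$; since $E$ is reflexive and $(f(s_n))$ is bounded in $E$, a subsequence converges weakly in $E$ to some $g$ with $\norm{g}_E \leq M$, and testing against $\phi \in F'$, together with right-continuity of $f$ in $F$, forces $\ip{\phi}{g} = \ip{\phi}{f(t_0)}$ for all $\phi \in F'$, hence $g = f(t_0) \in E$. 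The value at $t_0 = T$ is treated analogously, approximating from the left and identifying the limit with the weak-$F$ left limit.

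The second step upgrades weak-$F$ to weak-$E$ c\`adl\`ag regularity. Fixing $\psi \in E'$ and $t_0$, for right-continuity I would take any $t_n \downarrow t_0$; by Step~1 the sequence is bounded in $E$, so every subsequence admits a further subsequence converging weakly in $E$, and its limit is identified as $f(t_0)$ by testing against $F'$ and using right-continuity in $F$, exactly as in Step~1. Since every subsequence has a sub-subsequence along which $\ip{\psi}{f(t_n)} \to \ip{\psi}{f(t_0)}$, the whole sequence converges, giving right-continuity of $t \mapsto \ip{\psi}{f(t)}$. Existence of the left limit at $t_0$ follows the same way, the common weak-$E$ subsequential limit now being the weak-$F$ left limit $f(t_0-)$, which belongs to $E$ by Step~1. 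Hence $t \mapsto \ip{\psi}{f(t)}$ is c\`adl\`ag for every $\psi \in E'$, i.e.\ $f \in D_w([0,T];E)$.

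The crux---and the place where reflexivity is indispensable---is the upgrade in the second step: because $F'$ need not be norm-dense in $E'$, one cannot pass from $F$-weak to $E$-weak regularity by approximating functionals. Instead, boundedness in the reflexive space $E$ supplies weakly convergent subsequences, while the separating family $F' \subset E'$ pins down their limits, and the subsequence principle then yields convergence of the full net for every $\psi \in E'$. The remaining care concerns the endpoint $t = T$, where only left approximation is available; this is the one point at which I expect the routine adaptation of the Strauss argument to require an extra word.
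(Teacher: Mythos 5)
Your treatment of the boundedness step and of the upgrade from weak-$F$ to weak-$E$ regularity is correct, and it takes a slightly more elementary route than the paper: where you approximate each $t_0$ directly by points $s_n\downarrow t_0$ chosen outside the exceptional null set of the $L^\infty(0,T;E)$ bound, the paper instead convolves $f$ with one-sided mollifiers supported in $[-2/n,0]$, bounds $\norm{(\rho_n*f)(t_0)}_E$ by Minkowski's inequality, and then extracts a weak limit. Both devices serve the same purpose, namely producing for each fixed $t_0$ a sequence bounded in $E$ and converging to $f(t_0)$ weakly in $F$; the remaining ingredients --- reflexivity of $E$, identification of weak limits through the separating family $F'\subset E'$, and the subsequence principle --- coincide with the paper's argument, merely with the two steps taken in the opposite order.

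There is, however, one genuine defect, located exactly where you yourself anticipate trouble but do not resolve it: the endpoint $t=T$. Your Step~1 claims $f(t)\in E$ for \emph{every} $t\in[0,T]$, and at $t=T$ you propose to identify the weak-$E$ subsequential limit ``with the weak-$F$ left limit''. That identifies it with $f(T-)$, not with $f(T)$: a c\`adl\`ag function need not be left-continuous at $T$, and the value $f(T)$ is constrained neither by the $L^\infty$ bound (which holds only almost everywhere) nor by any approximation from inside $[0,T)$. In fact the inclusion is literally false without a convention: take $f\equiv 0$ on $[0,T)$ and $f(T)=y$ for some $y\in F\setminus E$; this function belongs to $L^\infty(0,T;E)\cap D_w([0,T];F)$ but not to $D_w([0,T];E)$. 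The paper disposes of this at the outset by remarking that $\{T\}$ is Lebesgue-negligible and redefining $f(T)$ to be an element of $E$, which yields a version still lying in the left-hand side; your proof needs the same one-line modification, after which everything else you wrote goes through.
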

\begin{proof}
  The inclusion of the space on the right-hand side in the space on
  the left-hand side is evident. Let
  $u \in L^\infty(0,T; E) \cap D_w([0,T];F)$.
  Since $\{T\}$ is negligible with respect to the Lebesgue measure on
  $[0,T]$, it is not restrictive to suppose that $u(T)\in E$
  (otherwise, we shall modify the value of $u$ in $T$, 
  obtaining a version of $u$ which is still in 
  $L^\infty(0,T; E) \cap D_w([0,T];F)$). We first show that, in
  order for $u$ to belong to $D_w([0,T]; E)$, it suffices to prove
  that there exists a constant $M$ such that $\norm{u(t)}_{E} \leq M$
  for every $t \in [0,T]$.

  \noindent \textsc{Step 1.} Assuming that $u([0,T])$ is bounded in
  $E$, let $t \in [0,T)$ and $(t_n) \subset [t,T)$ be a sequence
  converging to $t$. Then $u(t_n) \to u(t)$ weakly in $F$ by
  assumption, and, since $E$ is reflexive, there exists a subsequence
  $(t_{n'})$ and $v \in E$ such that $u(t_{n'}) \to v$ weakly in
  $E$. Therefore $v=u(t)$ and $u(t_n) \to u(t)$ weakly in $E$,
  i.e. $u$ is weakly c\`ad with values in $E$. A completely analogous
  (in fact easier) argument shows that $u$ is also weakly l\`ag with values
  in $E$.
  
  \noindent\textsc{Step 2.} Let $(\rho_n)$ be a sequence of mollifiers
  in $\erre$ whose support is contained in
  $[-\frac{2}{n},0]$. Denoting the extension of $u$ to zero outside
  $[0,T]$ by the same symbol, it follows from
  $u \in L^\infty(\erre;E)$ that $u_n:=\rho_n*u \in C(\erre; E)$. In
  particular, Minkowski's inequality yields
  \[
    \norm[\big]{u_n(t)}_{E} \leq \int_\erre \abs[\big]{\rho_n(s)}
    \norm[\big]{u(t-s)}_{E}\,ds \leq \norm[\big]{u}_{L^\infty(0,T;E)}
    =: M
  \]
  for all $t \in \erre$ and $n \in \enne$.  Let $t_0 \in [0,T)$ be
  arbitrary but fixed. By reflexivity of $E$, there exist $v \in E$
  and a subsequence of $(u_n(t_0))$, denoted by the same symbol for
  simplicity, such that $u_n(t_0) \to v$ weakly in $E$.  Moreover, for
  any $\varphi \in F'$, 
  \[
    \ip{\varphi}{u_n}_F = \ip{\varphi}{\rho_n*u}_F
    = \rho_n*\ip{\varphi}{u}_F,
  \]  
  where $f:=\ip{\varphi}{u}_F \in D([0,T])$ by assumption. In
  particular, $f$ is right-continuous at $t_0$, i.e. for any
  $\delta>0$ there exists $N \in \enne$ such that
  $\abs{f(t_0-s)-f(t_0)}<\delta$ for all $s \in [-2/N,0]$. Since the
  support of $\rho_n$ is contained in $[-2/n,0]$, for $n>N$
  we have
  \begin{align*}
    \abs[\big]{\ip{\varphi}{u_n(t_0)}_{F} - \ip{\varphi}{u(t_0)}_{F}}
    &\leq \int_\erre \rho_n(s) \abs[\big]{f(t_0-s) - f(t_0)} \,ds\\
    &\leq \delta \int_\erre \rho_n(s) = \delta,
  \end{align*}
  i.e. $\ip{\varphi}{u_n(t)}_{F} \to \ip{\varphi}{u(t)}_{F}$ as
  $n \to \infty$. Since this holds for any $\varphi \in F'$, we infer
  that $u_n(t_0) \to u(t_0)$ weakly in $F$. Moreover, as $(u_n(t_0))$
  is bounded in $E$ and $E$ is reflexive, we easily deduce that
  $u_n(t_0) \to u(t_0)$ weakly in $E$, thus also, by weak lower
  semicontinuity of the norm, that
  \[
    \norm[\big]{u(t_0)}_E \leq \liminf_{n \to \infty} \norm[\big]{u_n(t_0)}_E
    \leq M.
  \]
  Since $t_0 \in [0,T)$ was arbitrary, this implies that
  $\norm{u(t)}_E \leq M$ for all $t \in [0,T)$. Moreover, 
  since $u(T)\in E$, we have that $u([0,T])$ is bounded
  in $E$, as required.
\end{proof}

\subsection{A criterion for uniform integrability}
We shall need a slightly generalized version of the de la
Vall\'ee-Poussin criterion for uniform integrability.  For the
purposes of this paragraph only, $(E,\mathscr{E},\mu)$ will denote a
finite measure space, and $m$ will stand for the product measure of $\P$,
the Lebesgue measure, and $\mu$ on $\Omega \times [0,T] \times E$. For
compactness of notation, we set
\[
  L^p(m) := L^p(\Omega \times [0,T] \times E,\cF \otimes
  \mathscr{B}([0,T]) \otimes \mathscr{E},m)
\]
for any $p \in [0,\infty]$.

\begin{lemma}
  \label{valle_p_time}
  Let $F:\Omega \times [0,T] \times \erre \to [0,+\infty]$ be proper,
  convex and lower semicontinuous in the third variable, measurable in
  the first two, and such that
  \[
    \lim_{\abs{x}\to +\infty} \frac{F(\omega,t,x)}{\abs{x}} = +\infty
    \qquad \text{uniformly in }(\omega,t)\in \Omega\times [0,T].
  \]
  If $\mathcal{G} \subseteq L^0(m)$ is such that there exists a
  constant $C$ for which
  \[
    \norm{F(\cdot,\cdot,g)}_{L^1(m)} < C
    \qquad \forall g \in \mathcal{G},
  \]
  then $\mathcal{G}$ is uniformly integrable in
  $\Omega \times [0,T] \times E$.
\end{lemma}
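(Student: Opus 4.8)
The plan is to deduce uniform integrability directly from the superlinearity hypothesis, along the lines of the classical de la Vall\'ee-Poussin criterion, the only new point being the uniformity of the relevant thresholds with respect to $(\omega,t)$. First I would recast the uniform superlinearity as a pointwise domination: for every $M>0$ there exists $R_M>0$, \emph{independent} of $(\omega,t)$, such that
\[
  F(\omega,t,x) \geq M\abs{x} \qquad\text{whenever } \abs{x}\geq R_M,
\]
for all $(\omega,t)\in\Omega\times[0,T]$. This is exactly the content of the limit $F(\omega,t,x)/\abs{x}\to+\infty$ being attained uniformly in $(\omega,t)$.

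Since $m$ is a finite measure (being the product of $\P$, the Lebesgue measure on $[0,T]$, and the finite measure $\mu$, so that $m(\Omega\times[0,T]\times E)=T\mu(E)<\infty$), I would use the characterization of uniform integrability as $\lim_{c\to\infty}\sup_{g\in\mathcal{G}}\int_{\{\abs{g}>c\}}\abs{g}\,dm=0$; the accompanying boundedness of $\mathcal{G}$ in $L^1(m)$ then follows automatically, since $\int\abs{g}\,dm\leq c\,m(\Omega\times[0,T]\times E)+\sup_g\int_{\{\abs{g}>c\}}\abs{g}\,dm$. Fixing $\eps>0$, set $M:=C/\eps$ and take any $c\geq R_M$. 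On the set $\{\abs{g}>c\}\subseteq\{\abs{g}\geq R_M\}$ the domination gives $\abs{g}\leq M^{-1}F(\cdot,\cdot,g)$, whence, using the nonnegativity of $F$ and the assumed uniform bound,
\[
  \int_{\{\abs{g}>c\}}\abs{g}\,dm
  \leq \frac{1}{M}\int_{\{\abs{g}>c\}}F(\cdot,\cdot,g)\,dm
  \leq \frac{1}{M}\norm{F(\cdot,\cdot,g)}_{L^1(m)}
  \leq \frac{C}{M}=\eps .
\]
As the right-hand side is independent of $g$, this yields $\sup_{g\in\mathcal{G}}\int_{\{\abs{g}>c\}}\abs{g}\,dm\leq\eps$ for every $c\geq R_M$, and letting $\eps\downarrow0$ proves the claim.

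The argument uses only the nonnegativity of $F$ together with the uniform superlinearity; properness, convexity and lower semicontinuity enter solely to guarantee that $F$ is a normal integrand, hence jointly measurable, so that $F(\cdot,\cdot,g)$ is a well-defined measurable function and $\norm{F(\cdot,\cdot,g)}_{L^1(m)}$ makes sense for $g\in L^0(m)$. The one genuinely delicate point --- and precisely where this statement improves on the classical one --- is the extraction of a threshold $R_M$ that does not depend on $(\omega,t)$; once that is in hand, the rest is the standard de la Vall\'ee-Poussin computation, so I do not expect any serious obstacle beyond bookkeeping the uniformity carefully.
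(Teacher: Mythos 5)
Your proof is correct and follows essentially the same route as the paper's: both hinge on extracting, from the uniform superlinearity, a threshold $R_M$ independent of $(\omega,t)$ and then running the standard de la Vall\'ee-Poussin estimate $\int_{\{\abs{g}>c\}}\abs{g}\,dm\leq M^{-1}\norm{F(\cdot,\cdot,g)}_{L^1(m)}\leq C/M$. The only (immaterial) difference is that you verify the tail characterization of uniform integrability while the paper verifies $L^1$-boundedness together with uniform absolute continuity on small sets; for the finite measure $m$ these are equivalent.
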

\begin{proof}
  We need to show that $\mathcal{G}$ is bounded in $L^1(m)$ and that
  for every $\varepsilon>0$ there exist $\delta$ such that, for any
  measurable set $A$ with $m(A)<\delta$, one has
  \[
    \int_A \abs{g}\,dm < \varepsilon \qquad \forall g \in \mathcal{G}.
  \]
  Let $M>0$ be a constant. By assumption there exists $R$ such that
  $x \in \erre$ with $\abs{x}>R$ implies $\abs{F(\omega,t,x)}>M|x|$
  for every $(\omega,t) \in \Omega \times[0,T]$. Then one has, for any
  $g\in \mathcal{G}$,
  \begin{align*}
    \int_A \abs{g}\,dm
    &= \int_{A \cap \{\abs{g}\leq R\}} \abs{g}\,dm
    +\int_{A \cap \{\abs{g}> R\}} \abs{g}\,dm\\
    & \leq R\, m(A)
    +\frac{1}{M} \int F(\cdot,\cdot,g)\,dm\\
    &\leq R\,m(A) + \frac{C}{M}.
  \end{align*}
  Choosing $A=\Omega \times [0,T] \times E$ it immediately follows
  that $\mathcal{G}$ is bounded in $L^1(m)$.  Moreover, for every
  $\varepsilon>0$ there exists $M$ such that
  $\frac{C}{M}<\frac\varepsilon2$, hence
  $\delta:=\frac\varepsilon{2R}$ satisfies the condition we are
  looking for.
\end{proof}

The same argument shows, keeping $\omega \in \Omega$ fixed, that if
there exists a finite positive random variable $C:\Omega \to \erre_+$
such that, for $\P$-a.e. $\omega \in \Omega$,
\[
  \norm{F(\omega,\cdot,g)}_{L^1([0,T] \times E)} < C(\omega)
  \qquad \forall g \in \mathcal{G},
\]
then $\mathcal{G}(\omega,\cdot)$ is uniformly integrable in
$(0,T)\times E$ for $\P$-a.e. $\omega \in \Omega$.


\ifbozza\newpage\else\fi
\section{The It\^o formula}
\label{sec:Ito}
In this section we prove an It\^o-type formula for the square of the
$H$-norm: this can be seen as an integration-by-parts formula in a
generalized setting.  We point out that the framework that we consider
here is is ``unusual'', as we work with processes with components in
$V'$ and $L^1(D)$ simultaneously, for which It\^o's formula is not
available using existing techniques.
Let us recall also that the quadratic variation of $Z$ is
defined as the process
\[
  [Z,Z] := \norm{Z}^2 - \norm{Z_0}^2 - 2 Z_- \cdot Z.
\]
In the sequel we shall denote $[0,T] \times D$ by $D_T$.
\begin{prop}
  \label{prop:Ito}
  Let $C$ be a control process for $Z$, $G \in \cS_C(Z)$, $Y_0\in
  L^0(\Omega,\cF_0,\P;H)$, and the adapted processes
  \begin{align*}
    Y &\in L^0(\Omega; L^\infty(0,T;H)) \cap L^0(\Omega; L^2(0,T; V))\\
    v &\in L^0(\Omega; L^2(0,T; V')),\\
    g &\in L^0(\Omega; L^1(0,T; L^1(D)))
  \end{align*}
  be such that 
  \begin{equation}
    \label{eq:Yg}
    Y + \int_0^\cdot v(s)\,ds + \int_0^\cdot g(s)\,ds
    = Y_0 + G \cdot Z.
  \end{equation}
  Furthermore, assume that there exists a real number $a>0$ such that
  \[
    j(\cdot,aY) + j^*(\cdot,ag) \in L^0(\Omega; L^1(D_T)).
  \]
  Then
  \begin{align*}
  &\frac12\norm{Y}^2 + \int_0^\cdot \ip{v(s)}{Y(s)}\,ds 
  +\int_0^\cdot\!\!\int_D g(s)Y(s)\,dx\,ds\\
    &\hspace{3em} = \frac12\norm{Y_0}^2
      + \frac12 \bigl[G\cdot Z,G \cdot Z \bigr]
      + (Y_-G) \cdot Z.
  \end{align*}
\end{prop}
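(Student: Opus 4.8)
The plan is to regularize \eqref{eq:Yg} by the operators $(T_n)$, apply a variational It\^o formula for $\norm{\cdot}^2$ to the regularized equation, and then let $n\to\infty$. The whole point of the regularization is that, by ultracontractivity of $T_n$, the genuinely singular term $g\in L^1(D)$ is turned into $T_ng\in L^\infty(D)\embed H$, so that the regularized equation lives entirely within the Hilbertian variational framework, where an It\^o formula is available; the price to pay is that all the limits have to be taken carefully, and the term involving $g$ will be the hard one.

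First I would apply $T_n$ to both sides of \eqref{eq:Yg}. As $T_n$ is a fixed bounded operator it commutes with the stochastic integral, $T_n(G\cdot Z)=(T_nG)\cdot Z$ (check this on elementary integrands and pass to the limit via Proposition~\ref{prop:dom1}, noting $T_nG\in\cS_C(Z)$ since $\norm{T_nG(s)}_{\cL(K,H)}\le\norm{T_n}_{\cL(H)}\norm{G(s)}_{\cL(K,H)}$). This yields
\[
  T_nY + \int_0^\cdot T_nv\,ds + \int_0^\cdot T_ng\,ds = T_nY_0 + (T_nG)\cdot Z,
\]
where $T_nY\in L^\infty(0,T;V)$ (from $T_n\in\cL(H,V)$ and $Y\in L^\infty(0,T;H)$), $T_nv\in L^2(0,T;V')$, $T_ng\in L^1(0,T;H)$, and the noise $(T_nG)\cdot Z$ is a $V$-valued semimartingale. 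Thus the regularized process is $V$-valued with drift in $L^1(0,T;V')$, and since it belongs to $L^\infty(0,T;V)$ the duality pairing of the drift against $T_nY$ is integrable; the variational It\^o formula for the square of the $H$-norm driven by a semimartingale therefore applies (this is the announced extension of the formulas of Pardoux and Krylov--Rozovski\u{\i}, the extra $H$-valued drift $T_ng$ being a harmless bounded-variation perturbation). After a routine localization by stopping times reducing the processes to the integrability required by the control-process inequalities, one obtains, as an identity of real c\`adl\`ag processes,
\[
  \tfrac12\norm{T_nY}^2 + \int_0^\cdot\ip{T_nv(s)}{T_nY(s)}\,ds + \int_0^\cdot\!\!\int_D T_ng(s)\,T_nY(s)\,dx\,ds = \tfrac12\norm{T_nY_0}^2 + \tfrac12\bigl[(T_nG)\cdot Z,(T_nG)\cdot Z\bigr] + \bigl((T_nY)_-(T_nG)\bigr)\cdot Z.
\]

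Passing to the limit in the non-singular terms is then an assembly of the convergence properties of $(T_n)$. Since $T_n\to I$ in $\cL_s(H)$ one has $T_nY(t)\to Y(t)$ and $T_nY_0\to Y_0$ in $H$, so the norm terms converge; likewise $T_nY\to Y$ in $L^2(0,T;V)$ and $T_nv\to v$ in $L^2(0,T;V')$, whence $\int_0^\cdot\ip{T_nv}{T_nY}\to\int_0^\cdot\ip{v}{Y}$. For the stochastic terms I would invoke Proposition~\ref{prop:dom1}: as $T_nG\to G$ in $\cL(K,H)$ a.e. and is dominated by $(\sup_n\norm{T_n}_{\cL(H)})\norm{G}\in\cS_C(Z)$, the integrals $(T_nG)\cdot Z$ converge, and so do their quadratic variations. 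The delicate stochastic term is $((T_nY)_-(T_nG))\cdot Z$, and here the adjoint Lemma~\ref{lm:adj} is essential: its real-valued integrand equals $\ip{T_n^*T_nY(s-)}{G(s)\,\cdot\,}$, and since $T_n^*\to I$ in $\cL_s(H)$ and $T_nY(s-)\to Y(s-)$, one has $T_n^*T_nY_-\to Y_-$ boundedly, so another application of Proposition~\ref{prop:dom1} gives convergence to $(Y_-G)\cdot Z$ in probability.

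The main obstacle is the convergence of the singular term $\int_D T_ng\,T_nY\,dx\to\int_D gY\,dx$, where one controls $T_ng$ only in $L^1$ and $T_nY$ only in $L^2$; I would settle it $\omega$-wise in $L^1(D_T)$ by uniform integrability. By the Fenchel--Young inequality together with the symmetry assumption (J)(c), $a^2\abs{T_ng\,T_nY}\lesssim j(\cdot,aT_nY)+j^*(\cdot,aT_ng)+1$; by Jensen's inequality for the sub-Markovian operators $T_n$ (valid because $j(\cdot,\cdot,0)=0=j^*(\cdot,\cdot,0)$ and $j,j^*$ are convex), the right-hand side is dominated pointwise by $T_n[j(\cdot,aY)]+T_n[j^*(\cdot,ag)]+1$. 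Since $j(\cdot,aY),j^*(\cdot,ag)\in L^1(D_T)$ by hypothesis and $T_n\to I$ in $\cL_s(L^1(D))$, these majorants converge in $L^1(D_T)$ and are therefore uniformly integrable (the superlinearity encoded in (J)(d) and the criterion of Lemma~\ref{valle_p_time} give the same conclusion). Hence $\{T_ng\,T_nY\}_n$ is uniformly integrable, and as it converges in measure to $gY$ (because $T_ng\to g$ in $L^1(D_T)$ and $T_nY\to Y$ in $L^2(D_T)$), Vitali's theorem yields $T_ng\,T_nY\to gY$ in $L^1(D_T)$, so that $\int_0^\cdot\!\int_D T_ng\,T_nY\to\int_0^\cdot\!\int_D gY$ uniformly in time. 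Collecting all the limits gives the claimed identity.
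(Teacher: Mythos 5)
Your overall architecture --- regularize by $T_n$, apply an integration-by-parts formula to the regularized identity, and pass to the limit using the adjoint Lemma~\ref{lm:adj} and Proposition~\ref{prop:dom1} for the stochastic terms and the Fenchel--Young/Jensen/Vitali argument for the singular product $T_ng\,T_nY$ --- is exactly the paper's, and your treatment of the hard term $\int_D T_ng\,T_nY\,dx$ coincides with the published proof essentially line by line.

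There is, however, a genuine gap at the pivotal step, namely the justification of the It\^o formula for the regularized equation. You keep the drift $T_nv$ in $L^2(0,T;V')$ and invoke ``the variational It\^o formula for the square of the $H$-norm driven by a semimartingale'', identifying it with ``the announced extension of the formulas of Pardoux and Krylov--Rozovski\u{\i}''. But that announced extension is Proposition~\ref{prop:Ito} itself, the statement you are proving, so the appeal is circular; moreover, no variational ($V,H,V'$) It\^o formula for general (not necessarily quasi-left-continuous) semimartingale noise is available off the shelf --- Gy\H{o}ngy's extension covers quasi-left-continuous locally square-integrable martingales only, and this unavailability is precisely the reason the present section exists. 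The regularization buys more than you use: the paper observes that after applying $T_n$ \emph{every} integrand on the left-hand side becomes an $H$-valued process ($T_ng\in L^\infty(D)\embed H$ by ultracontractivity, as you note, and $T_nv$ as well), so that no variational triple is needed and one may apply the purely Hilbertian integration-by-parts formula for $H$-valued semimartingales of M\'etivier (\cite[\S 25]{Met}). With that substitution your limit passage goes through. Two smaller points: you should verify that $Y_-$ is predictable before asserting that $(Y_-G)\cdot Z$ is well defined (the paper does this via Lemma~\ref{lm:strauss}, which gives weak c\`adl\`agness of $Y$ in $H$, and Pettis' theorem), and the convergence of the quadratic variations $[(T_nG)\cdot Z,(T_nG)\cdot Z]$ is not automatic from that of the integrals: one has to write $[M,M]=\norm{M}^2-2M_-\cdot M$ and run the same $T_n^*T_n\to I$ argument that you correctly deploy for $((T_nY)_-(T_nG))\cdot Z$.
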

\begin{proof}
  Let us first show that the stochastic integral $(Y_-G) \cdot Z$ is
  well defined: it follows from \eqref{eq:Yg} that $Y$ is strongly
  c\`adl\`ag in $V_0'$. Since
  $Y\in L^\infty(0,T; H)$, Lemma~\ref{lm:strauss} implies that $Y$ is
  weakly c\`adl\`ag in $H$, i.e. that, for any $h \in H$, $\ip{Y}{h}$
  is c\`adl\`ag, hence that $\ip{Y_-}{h}$ is left-continuous, in
  particular predictable, or, equivalently, that $Y_-$ is weakly
  predictable. However, since $H$ is separable, Pettis' theorem
  implies that $Y_-$ is predictable. Moreover, one has
  \begin{align*}
    \lambda^C_T(Y_-G)
    &= C(T) \bigl( \norm[\big]{Y_-G}^2_{\cL(K,\erre)} \cdot C \bigr)_T\\
    &\leq
    C(T) \sup_{t<T} \norm[\big]{Y(t)}^2
    \bigl( \norm[\big]{G}^2_{\cL(K,H)} \cdot C \bigr)_T
    = \sup_{t<T}\norm[\big]{Y(t)}^2 \lambda^C_T(G) < +\infty.
  \end{align*}
  Denoting the action of the operator $T_n$ by a superscript $n$, we
  have
  \[
    Y^n + \int_0^\cdot v^n(s)\,ds + \int_0^\cdot g^n(s)\,ds = Y^n_0 +
    G^n \cdot Z,
  \]
  as the Bochner integral as well as the stochastic integral commute
  with linear continuous operators. Since all integrands on the
  left-hand side are $H$-valued processes, the integration-by-parts
  formula for $H$-valued semimartingales yields (cf.~\cite[\S
  25]{Met})
  \begin{align*}
  &\frac12\norm{Y^n}^2 + \int_0^\cdot \ip{v^n(s)}{Y^n(s)}\,ds
  +\int_0^\cdot\!\!\int_Dg^n(s)Y^n(s)\,dx\,ds\\
    &\hspace{3em} = \frac12\norm{Y^n_0}^2
      + \frac12 \bigl[G^n\cdot Z,G^n \cdot Z\bigr]
      + (Y^n_- G^n) \cdot Z.
  \end{align*}
  We are now going to pass to the limit as $n \to \infty$ in this
  identity.  The continuity of $(T_n)$ in $\cL_s(H)$ immediately yields
  \begin{alignat*}{2}
    \norm{Y_0^n}^2 &\longto \norm{Y_0}^2, && \qquad \\
    \norm{Y^n(t)}^2 &\longto \norm{Y(t)}^2 && \qquad \forall t \in [0,T],\\
    G^n &\longto G && \qquad \text{in } \cL_s(K,H) \text{ a.e. in }
    \Omega \times [0,T].
  \end{alignat*}
  Similarly, since $(T_n)$ is also continuous in the strong operator
  topology of $V$, $V'$, and $L^1(D)$, the dominated convergence
  theorem readily implies that
  \begin{alignat*}{2}
    Y^n &\longto Y  && \quad \text{in } L^2(0,T; V),\\
    v^n &\longto v  && \quad \text{in } L^2(0,T; V'),\\
    g^n &\longto g  && \quad \text{in } L^1(D_T).
  \end{alignat*}
  In particular, passing to a subsequence if necessary, this implies
  that $g^nY^n\to gY$ almost everywhere in $D_T$. Therefore, if we
  show that $(g^nY^n)$ is uniformly integrable on $D_T$, we can
  conclude by Vitali's theorem that the latter convergence continues
  to hold also in $L^1(D_T)$. Thanks to the assumptions on
  the behavior at infinity of $j$, the sub-Markovianity of $T_n$, and
  the generalized Jensen inequality for positive operators
  (cf.~\cite{Haa07}), we have
  \begin{align*}
    \pm a^2g^nY^n \leq j(\cdot, \pm aY^n) + j^*(\cdot, ag^n)
    &\lesssim 1 + j(\cdot, aY^n) + j^*(\cdot, g^n)\\
    &\leq 1 + T_n\bigl( j(\cdot, aY) + j^*(\cdot, g) \bigr),
  \end{align*}
  where
  \[
    T_n\bigl( j(\cdot, aY) + j^*(\cdot, g) \bigr) \longto
    j(\cdot, aY) + j^*(\cdot, g) \qquad \text{in } L^1(D_T)
  \]
  as $n \to \infty$, because the right-hand side belongs to
  $L^1(D_T)$ a.s. by assumption. In particular,
  $T_n\bigl( j(\cdot, aY) + j^*(\cdot, g) \bigr)$ is uniformly
  integrable on $D_T$, and so is $(g^nY^n)$ by
  comparison. This implies, as explained above, that
  \[
    \int_0^\cdot\!\!\int_D g^n(s)Y^n(s)\,dx\,ds \longto
    \int_0^\cdot\!\!\int_D g(s)Y(s)\,dx\,ds.
  \]
  Let us now consider the quadratic variation term.  By definition we
  have
  \[
  \bigl[ G^n\cdot Z, G^n\cdot Z \bigr] 
  = \norm[\big]{G^n\cdot Z}^2 - 2((G^n\cdot Z)_-) \cdot (G^n\cdot Z),
  \]
  where the stochastic integral on the right-hand side can be written
  as $\tilde G^n\cdot Z$, with
  \[
  \tilde G^n:\Omega\times[0,T]\to\cL(K,\erre), \qquad 
  \tilde G^n(\omega,t)k:=\ip{(G^n\cdot Z)(\omega, s-)}{G^n(\omega, s)k}, 
  \quad k\in K.
  \]
  Noting that $G^n \cdot Z=T_n(G \cdot Z)$, it is immediate that
  $(G^n \cdot Z)_t \to (G\cdot Z)_t$ for all $t \in [0,T]$
  $\P$-a.s. as $n \to \infty$.
  Moreover, setting
  \[
  \tilde{G}:\Omega\times[0,T]\to \cL(K,\erre), \qquad
  \tilde{G}(\omega,s)k:=\ip{(G\cdot Z)(\omega, s-)}{G(\omega, s)k}, 
  \quad k\in K,
  \]
  one has
  \begin{align*}
  \tilde{G}^nk - \tilde{G}k 
  &= \ip[\big]{(G^n \cdot Z)_-}{G^nk} - \ip[\big]{(G\cdot Z)_-}{Gk}\\
  &= \ip[\big]{T_n^*T_n(G \cdot Z)_- - (G \cdot Z)_-}{Gk}\\
  &\leq \norm[\big]{T_n^*T_n(G\cdot Z)_- - (G\cdot Z)_-}
  \norm[\big]{G}_{\cL(K,H)} \norm{k}_K,
  \end{align*}
  where
  \begin{align*}
  \norm[\big]{T_n^*T_n (G \cdot Z)_- - (G \cdot Z)_-}
  &\leq \norm[\big]{T_n^*T_n (G\cdot Z)_- - T_n^* (G\cdot Z)_-}\\
  &\quad + \norm{T_n^*(G \cdot Z)_- - (G \cdot Z)_-}\\
  &\leq \sup_{n\in\enne} \norm[\big]{T_n^*}_{\cL(H)}
     \norm[\big]{T_n (G \cdot Z)_- - (G \cdot Z)_-}\\
  &\quad + \norm[\big]{T_n^* (G\cdot Z)_- - (G \cdot Z)_-},
  \end{align*}
  and the right-hand side converges to zero pointwise in time
  $\P$-a.s. because both $T_n$ and its adjoint converge to the
  identity operator in $\cL_s(H)$. Therefore $\tilde{G}^n$ converges
  to $\tilde{G}$ in $\cL(K,\erre)$ a.e. in $\Omega \times [0,T]$, and
  it follows by Proposition~\ref{prop:dom1} that
  \[
  \bigl[ G^n \cdot Z, G^n \cdot Z \bigr]_t \longto
  \bigl[ G \cdot Z, G \cdot Z]_t \qquad \forall t \in [0,T]
  \quad \P\text{-a.s.}
  \]
  Lastly, let us consider the convergence of the term
  $(Y_-^nG^n) \cdot Z$. Note that the
  $\cL(K,\erre)$-valued processes $Y^n_-G^n$ and $Y_-G$
  are defined as
  \[
  (Y^n_-G^n): k \longmapsto \ip[\big]{T_nY_-}{T_nGk}, \qquad
  (Y_-G) : k \longmapsto \ip[\big]{Y_-}{Gk},
  \]
  so that
  \begin{align*}
    \abs[\big]{(Y^n_-G^n-Y_-G)k}
    &= \ip[\big]{T_nY_-}{T_nGk} - \ip[\big]{Y_-}{Gk}\\
    &= \ip[\big]{T_n^*T_nY_- - Y_-}{Gk}\\
    &\leq \norm[\big]{T_n^*T_nY_- - Y_-}\norm[\big]{G}_{\cL(K,H)} \norm{k}_K,
  \end{align*}
  which in turn yields
  \[
    \norm[\big]{(Y^n_-G^n-Y_-G)}_{\cL(K,\erre)} \leq
    \norm[\big]{G}_{\cL(K,H)} \norm[\big]{T_n^*T_nY_- - Y_-}
    \qquad \text{a.e. in } \Omega \times (0,T].
  \]
  Recalling that $(T_n)_n$ is uniformly bounded in $\cL(H)$, hence so
  is $(T_n^*)_n$, it follows that
  \begin{align*}
    \norm[\big]{T_n^*T_nY_- - Y_-}
    &\leq \norm[\big]{T_n^*T_nY_- - T_n^*Y_-}
      + \norm[\big]{T_n^*Y_- - Y_-}\\
    &\leq \sup_{n\in\enne} \norm[\big]{T_n^*}_{\cL(H)}
      \norm[\big]{T_nY_- - Y_-} + \norm[\big]{T_n^*Y_- - Y_-},
  \end{align*}
  where the right-hand side converges to zero a.e. in
  $\Omega \times (0,T]$ thanks to the assumptions on $(T_n)$ and to
  Lemma~\ref{lm:adj}. Therefore $Y^n_-G^n$ converges to $Y_-G$ in
  $\cL(K,\erre)$ a.e. in $\Omega \times (0,T]$, so that
  Proposition~\ref{prop:dom1} allows us to conclude that
  $(Y^n_-G^n) \cdot Z$ converges to $(Y_-G) \cdot Z$ in probability
  uniformly in time.
\end{proof}


\ifbozza\newpage\else\fi
\section{Well-posedness with additive noise}
\label{sec:add}
The goal of this section is to establish a well-posedness result for
the following version of \eqref{eq:0} with additive noise:
\begin{equation}
  \label{eq:add}
  dX(t) + AX(t)\,dt + \beta(t,X(t))\,dt \ni G(t)\,dZ(t), \qquad X(0)=X_0,
\end{equation}
where $G$ is a strongly predictable $\cL(K,H)$-valued process
integrable with respect to $Z$. This is an essential step towards the
proof of the main results in the next section.

We begin with an existence result.
\begin{thm}
  \label{thm:add}
  Let $C$ be a control process for $Z$ and $G \in \cS_C(Z)$ such that
  $\E\lambda_{T-}^C(G) < \infty$ and assume that
  $X_0 \in L^2(\Omega;H)$.  Then \eqref{eq:add} admits a strong
  solution.
\end{thm}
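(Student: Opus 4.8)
The plan is to construct the solution as the limit of a doubly regularized family, extracting a priori estimates from the M\'etivier--Pellaumail machinery of {\S}\ref{ssec:int} and identifying the limit of the singular drift by means of the It\^o formula of Proposition~\ref{prop:Ito}. To that end I would first replace $A$ by its Yosida approximation $A_\lambda:=A_2(I+\lambda A_2)^{-1}$ on $H$ and $\beta$ by its (progressively measurable) Yosida approximation $\beta_\lambda$, both globally Lipschitz on $H$. Since the forcing $G\cdot Z$ is additive and $Z$-integrable, M\'etivier and Pellaumail's existence theorem (see \cite{Met,MetPell}) yields a unique strong solution $X_\lambda\in\mathbb{S}^2$ of
\[
  X_\lambda + \int_0^\cdot A_\lambda X_\lambda(s)\,ds + \int_0^\cdot\beta_\lambda(s,X_\lambda(s))\,ds = X_0 + G\cdot Z.
\]
Setting $J_\lambda:=(I+\lambda A_2)^{-1}$, the identity $\ip{A_\lambda u}{u}=\ip{AJ_\lambda u}{J_\lambda u}+\lambda\norm{A_\lambda u}^2\geq c\norm{J_\lambda u}_V^2$ supplies the coercivity needed below.

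\emph{A priori estimates.} Applying the integration-by-parts formula for $\norm{X_\lambda}^2$ (genuinely $H$-valued here, as in the proof of Proposition~\ref{prop:Ito}), discarding the nonnegative terms $\ip{A_\lambda X_\lambda}{X_\lambda}$ and $\int_D\beta_\lambda(X_\lambda)X_\lambda$, and bounding $(X_{\lambda,-}G)\cdot Z$ by the maximal inequality of {\S}\ref{ssec:int}, I obtain
\[
  \E\sup_{t\leq T}\norm{X_\lambda(t)}^2 + \E\int_0^T\norm{J_\lambda X_\lambda}_V^2\,ds + \E\int_0^T\!\!\int_D\beta_\lambda(X_\lambda)X_\lambda\,dx\,ds \lesssim \E\norm{X_0}^2 + \E\lambda^C_{T-}(G),
\]
uniformly in $\lambda$, together with $\lambda\,\E\int_0^T\norm{A_\lambda X_\lambda}^2\,ds\lesssim1$. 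The Fenchel--Young inequality $\beta_\lambda(X_\lambda)X_\lambda\geq j_\lambda(X_\lambda)+j^*(\cdot,\beta_\lambda(X_\lambda))$ and assumption (J)(c) then bound $\E\int_0^T\!\!\int_D j(\cdot,\pm aX_\lambda)$ and $\E\int_0^T\!\!\int_D j^*(\cdot,a\beta_\lambda(X_\lambda))$ for a suitable $a>0$; by the superlinearity of $j^*$ and Lemma~\ref{valle_p_time}, the family $(\beta_\lambda(X_\lambda))$ is uniformly integrable on $\Omega\times D_T$.

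\emph{Passage to the limit.} The additive structure is decisive: the difference $X_\lambda-X_\mu$ solves a \emph{pathwise deterministic} monotone evolution equation, the noise having cancelled, so standard convergence estimates for Yosida approximations of maximal monotone operators, combined with the bound on $\lambda\,\E\int_0^T\norm{A_\lambda X_\lambda}^2\,ds$, show that $(X_\lambda)$ is Cauchy and converges strongly to some $X$ in $L^2(\Omega;C([0,T];H))$, with $X-G\cdot Z$ continuous. Along a subsequence I then have $X_\lambda\to X$ a.e.\ in $\Omega\times D_T$, $J_\lambda X_\lambda\rightharpoonup X$ in $L^2(\Omega;L^2(0,T;V))$ (so $A_\lambda X_\lambda=AJ_\lambda X_\lambda\rightharpoonup AX$ in $L^2(\Omega;L^2(0,T;V'))$), $\beta_\lambda(X_\lambda)\rightharpoonup\xi$ weakly in $L^1(\Omega\times D_T)$ by Dunford--Pettis, and $(X_{\lambda,-}G)\cdot Z\to(X_-G)\cdot Z$ by Proposition~\ref{prop:dom1}. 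Passing to the limit gives the identity of Definition~\ref{def:sol}(c) with $B(\cdot,X)=G$; since $X\in L^\infty(0,T;H)$ and $X=X_0+G\cdot Z-\int_0^\cdot(AX+\xi)\,ds$, Lemma~\ref{lm:strauss} makes $X$ weakly c\`adl\`ag in $H$, with the required measurability.

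\emph{Identification of $\xi$ --- the main obstacle.} The delicate point is to prove $\xi\in\beta(\cdot,X)$ a.e.; this cannot rely on reflexivity, as no growth bound on $\beta$ is assumed, so I work in the non-reflexive $L^1$ setting. The a priori bounds, passed to the liminf, give $j(\cdot,aX)+j^*(\cdot,a\xi)\in L^1(D_T)$ a.s., whence Proposition~\ref{prop:Ito} applies to the limit equation and yields the energy identity for $\tfrac12\norm{X}^2$. Taking expectations in both the regularized and the limiting energy identities, using the strong convergence of $X_\lambda(T)$, the convergence of the stochastic integral, the weak lower semicontinuity of the coercive form $\ip{A\cdot}{\cdot}$, and $\ip{A_\lambda X_\lambda}{X_\lambda}\geq\ip{AJ_\lambda X_\lambda}{J_\lambda X_\lambda}$, I deduce $\limsup_\lambda\E\int_0^T\!\!\int_D\beta_\lambda(X_\lambda)X_\lambda\,dx\,ds\leq\E\int_0^T\!\!\int_D\xi X\,dx\,ds$. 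On the other hand, weak lower semicontinuity of the convex integrals of $j$ and $j^*$ gives $\E\int_0^T\!\!\int_D\bigl(j(\cdot,X)+j^*(\cdot,\xi)\bigr)\,dx\,ds\leq\liminf_\lambda\E\int_0^T\!\!\int_D\beta_\lambda(X_\lambda)X_\lambda\,dx\,ds$. The two inequalities force equality in Fenchel--Young, i.e.\ $\xi X=j(\cdot,X)+j^*(\cdot,\xi)$ a.e., which is precisely $\xi\in\beta(\cdot,X)$. This identification, resting on Proposition~\ref{prop:Ito} and the interplay between the energy balance and weak $L^1$ compactness, is where I expect the real difficulty to lie.
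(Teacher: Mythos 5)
There is a genuine gap at the step ``\emph{Passage to the limit}''. You claim that, since the noise cancels in $X_\lambda-X_\mu$, ``standard convergence estimates for Yosida approximations'' combined with the bound on $\lambda\,\E\int_0^T\norm{A_\lambda X_\lambda}^2\,ds$ show that $(X_\lambda)$ is Cauchy. They do not. Testing the equation for $X_\lambda-X_\mu$ with $X_\lambda-X_\mu$ produces, after the usual splitting $x=J_\lambda x+\lambda A_\lambda x$, the error term $\ip{A_\lambda X_\lambda-A_\mu X_\mu}{\lambda A_\lambda X_\lambda-\mu A_\mu X_\mu}$, whose negative part is controlled by $(\lambda+\mu)\norm{A_\lambda X_\lambda}\,\norm{A_\mu X_\mu}$; with only the weighted bounds $\lambda\norm{A_\lambda X_\lambda}^2_{L^2}\lesssim 1$ this is of order $\sqrt{\lambda/\mu}+\sqrt{\mu/\lambda}$ and does \emph{not} vanish. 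The same failure occurs for the $\beta$ term, where only $\lambda\norm{\beta_\lambda(X_\lambda)}_{L^2}^2\lesssim\norm{\beta_\lambda(X_\lambda)X_\lambda}_{L^1}$ is available. The unweighted bounds that would rescue the argument require $X_0\in\dom(A_2)$ and regular forcing, neither of which holds here ($X_0\in L^2(\Omega;H)$ only, and $G\cdot Z$ is merely c\`adl\`ag). This is precisely why the paper does not run a Cauchy argument in $\lambda$: it obtains strong convergence of $X_\lambda$ in $L^2(0,T;H)$ \emph{pathwise}, by weak compactness together with Simon's compactness criterion applied to $X_\lambda-G\cdot Z$, whose time derivative $-A_\lambda X_\lambda-\beta_\lambda(\cdot,X_\lambda)$ must be bounded in $L^1(0,T;V_0')$ for a.e.\ fixed $\omega$. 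That pathwise bound on $\beta_\lambda(\cdot,X_\lambda)$ in $L^1(D_T)$ --- and the pathwise bound on $j^*(\cdot,\beta_\lambda(\cdot,X_\lambda))$ needed later --- are obtained only under the auxiliary hypothesis that $G$ takes values in $\cL(K,V_0)$, so that $G\cdot Z$ is essentially bounded and $j(\cdot,(G\cdot Z)/\eps)$ is integrable despite the arbitrary growth of $j$. Your proposal omits this two-stage structure entirely: the paper first proves well-posedness for such $V_0$-valued $G$ (Proposition~\ref{prop:aux}), then treats general $G$ by approximating with $G^n:=T_nG$ and showing $(X^n)$ is Cauchy via the continuous-dependence estimate of Proposition~\ref{prop:cont}. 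Some substitute for this reduction is needed; without it both the strong convergence and the verification of the integrability hypothesis of Proposition~\ref{prop:Ito} for the limit are unsupported.

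Your identification of $\xi\in\beta(\cdot,X)$ by comparing energy identities in expectation is a legitimate alternative in spirit to the paper's pathwise argument (which uses the subdifferential inequality on Severini--Egorov sets where $X_{\lambda'}\to X$ uniformly, together with the Moreau--Yosida regularization of $j$). But as written it has an unaddressed point: since $Z$ is a general semimartingale, the stochastic integrals $((X_\lambda)_-G)\cdot Z$ do not have vanishing expectation, so your $\limsup$ inequality requires $\E\bigl(((X_\lambda)_-G)\cdot Z\bigr)_{T}\to\E\bigl((X_-G)\cdot Z\bigr)_{T}$, i.e.\ convergence in $L^1(\Omega)$ and not merely in probability; this needs a uniform-integrability argument (available from the $\mathbb{S}^2$ bounds and the maximal inequalities of {\S}\ref{ssec:int}, but it must be supplied). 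You would also need to justify $X_\lambda(T)\rightharpoonup X(T)$ in $L^2(\Omega;H)$ before invoking lower semicontinuity of the norm, and to note that $\abs{\xi X}\lesssim 1+j(\cdot,X)+j(\cdot,-X)+j^*(\cdot,\xi)$ (via assumption (J)(c)) so that $\E\int_{D_T}\xi X$ is well defined. These are repairable, but the strong-convergence gap above is structural.
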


The main idea of the proof is to regularize both $A$ and $\beta$ in
\eqref{eq:add}, so that the regularized equation admits a (unique)
strong solution in the classical sense, to obtain uniform estimates on
such solutions, and finally to pass to the limit using compactness and
monotonicity arguments.

For any $\lambda \in \mathopen]0,1\mathclose[$, let
$\beta_\lambda: \Omega \times[0,T] \times \erre\to \erre$ and
$A_\lambda \in \cL(H)$ be the Yosida approximations of
$r \mapsto \beta(\cdot,\cdot,r)$ and of $A_2$, respectively
(see \cite{Barbu:type} for references). Recall
that $A_2$ denotes the part of $A$ in $H$ and that, setting
$J_\lambda:=(I+\lambda A_2)^{-1}$, by definition of $A_\lambda$ we
have that $A_\lambda=AJ_\lambda$.

Let us consider the regularized equation
\begin{equation}
  \label{eq:reg}
  dX_\lambda(t) + A_\lambda X_\lambda(t)\,dt +
  \beta_\lambda(t,X_\lambda(t))\,dt = G(t)\,dZ(t),
  \qquad X_\lambda(0) = X_0.
\end{equation}
Since $A_\lambda + \beta_\lambda$ is Lipschitz continuous (uniformly
over $\Omega \times [0,T]$), the equation admits a unique strong
solution $X_\lambda$ in the classical sense, i.e. $X_\lambda$ is an
adapted c\`adl\`ag $H$-valued process, with
\[
  \E\sup_{t\leq T}\norm{X_\lambda(t)}^2<+\infty,
\]
such that 
\[
  X_\lambda + \int_0^\cdot A_\lambda X_\lambda(s)\,ds
  + \int_0^\cdot \beta_\lambda(s,X_\lambda(s))\,ds =
  X_0 + G\cdot Z
\]
(see \cite[Thm.~34.7--35.2]{Met}).

We are now going to establish a priori estimates on $(X_\lambda)$ and
functionals thereof.
\begin{lemma}
  \label{lm:est1}
  There exists a constant $N>0$ such that, for every
  $\lambda\in\mathopen]0,1\mathclose[$,
  \[
    \E\sup_{t<T} \norm{X_\lambda(t)}^2
    + \E \norm{J_\lambda X_\lambda}^2_{L^2(0,T; V)}
    + \E \norm{\beta_\lambda(\cdot, X_\lambda)X_\lambda}_{L^1(D_T)}
    <N.
  \]
\end{lemma}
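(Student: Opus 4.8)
The plan is to apply the generalized It\^o formula of Proposition~\ref{prop:Ito} to the regularized solution $X_\lambda$, taking therein $Y=X_\lambda$, $v=A_\lambda X_\lambda$, $g=\beta_\lambda(\cdot,X_\lambda)$, and the same $G$ and $X_0$. Before doing so one must check its hypotheses. The integrability requirements on $Y$, $v$, $g$ follow from the Lipschitz continuity of $A_\lambda$ and $\beta_\lambda$ together with $\E\sup_{t\leq T}\norm{X_\lambda(t)}^2<\infty$; the crucial hypothesis $j(\cdot,aX_\lambda)+j^*(\cdot,a\beta_\lambda(\cdot,X_\lambda))\in L^0(\Omega;L^1(D_T))$ for some $a>0$ should hold because, for fixed $\lambda$, $\beta_\lambda$ has at most linear growth and $X_\lambda\in L^\infty(0,T;H)$ a.s., so one can bound these convex integrands using the growth assumptions (J) on $j$ and the superlinearity of $j^*$. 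Once Proposition~\ref{prop:Ito} applies, I would obtain the identity
\begin{align*}
  \tfrac12\norm{X_\lambda(t)}^2
  + \int_0^t \ip{A_\lambda X_\lambda(s)}{X_\lambda(s)}\,ds
  + \int_0^t\!\!\int_D \beta_\lambda(s,X_\lambda(s))X_\lambda(s)\,dx\,ds\\
  = \tfrac12\norm{X_0}^2
  + \tfrac12\bigl[G\cdot Z,G\cdot Z\bigr]_t
  + \bigl((X_\lambda)_-G\bigr)\cdot Z\bigr|_t.
\end{align*}

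The key structural point is the sign of the two integral terms on the left. For the monotone part, $\ip{A_\lambda X_\lambda}{X_\lambda}=\ip{AJ_\lambda X_\lambda}{J_\lambda X_\lambda}+\lambda\norm{A_\lambda X_\lambda}^2\geq c\norm{J_\lambda X_\lambda}_V^2$ by the coercivity in Assumption~(A) and the standard identity for Yosida approximations; this is exactly where the $\norm{J_\lambda X_\lambda}_{L^2(0,T;V)}^2$ term in the estimate is produced. For the singular part, $\beta_\lambda(\cdot,X_\lambda)X_\lambda\geq0$ (indeed $\beta_\lambda(\cdot,r)r\geq0$ since $\beta_\lambda$ is monotone and vanishes at a point where $0\in\beta(\cdot,0)$, which follows from $j(\cdot,\cdot,0)=0$ being the minimum), so this term is already nonnegative and equals its own modulus. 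Thus, moving the stochastic and quadratic-variation terms to the right, all three quantities to be estimated appear with favourable signs.

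The remaining work is to bound the right-hand side in expectation, uniformly in $\lambda$. The initial term is controlled by $\E\norm{X_0}^2<\infty$. The quadratic variation satisfies $\E[G\cdot Z,G\cdot Z]_T\lesssim \E\lambda_{T-}^C(G)<\infty$ by the control-process estimates of \S\ref{ssec:int}, uniformly in $\lambda$ since $G$ does not depend on $\lambda$. The stochastic integral term is the delicate one: taking a supremum over $t<T$ before taking expectations, I would apply the maximal inequality $\E\bigl((X_\lambda)_-G\cdot Z\bigr)^*_{T-}\leq 2\E\bigl(C_{T-}(\norm{(X_\lambda)_-G}^2\cdot C)_{T-}\bigr)^{1/2}$ derived from the Le\'pingle--Lenglart--Pratelli lemma, estimate $\norm{(X_\lambda)_-G}^2_{\cL(K,\erre)}\leq \sup_{r<s}\norm{X_\lambda(r)}^2\norm{G}^2_{\cL(K,H)}$, factor out the supremum, and use the elementary inequality $2ab\leq \eps a^2+\eps^{-1}b^2$ to absorb a small multiple of $\E\sup_{t<T}\norm{X_\lambda(t)}^2$ into the left-hand side. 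I expect this absorption step to be the main obstacle, since one must ensure the coefficient multiplying the supremum is strictly less than the $\tfrac12$ sitting in front of $\norm{X_\lambda}^2$ on the left, so that after rearranging, the bound $N$ depends only on $\E\norm{X_0}^2$ and $\E\lambda_{T-}^C(G)$ and not on $\lambda$; the square-root structure of the maximal inequality is precisely what makes this possible.
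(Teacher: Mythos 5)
Your overall strategy --- energy identity, coercivity of $A$ via the Yosida identity $\ip{A_\lambda x}{x}=\ip{AJ_\lambda x}{J_\lambda x}+\lambda\norm{A_\lambda x}^2$, nonnegativity of $\beta_\lambda(\cdot,r)r$, the M\'etivier--Pellaumail $L^1$ maximal inequality with the square-root structure, and absorption of $\tfrac14\E (X_\lambda)^{*2}_{T-}$ --- is exactly the paper's. The estimates of the quadratic variation and of the stochastic integral term that you sketch are the ones carried out in the paper, and your absorption step does close.

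There is, however, a genuine flaw in how you justify the energy identity. You invoke Proposition~\ref{prop:Ito}, whose hypothesis requires $j(\cdot,aX_\lambda)+j^*(\cdot,a\beta_\lambda(\cdot,X_\lambda))\in L^1(D_T)$ for some $a>0$, and you claim this ``should hold'' from the linear growth of $\beta_\lambda$ and $X_\lambda\in L^\infty(0,T;H)$ together with ``the growth assumptions (J) on $j$''. But Assumption (J) imposes \emph{no} growth restriction on $j$ whatsoever --- that is the whole point of the paper --- so $j(\omega,t,\cdot)$ may grow, say, like $e^{x^2}$, and $X_\lambda(t)\in L^2(D)$ gives no control on $\int_D j(\cdot,aX_\lambda)$; the composition need not be integrable. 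This verification therefore fails, and Proposition~\ref{prop:Ito} is not applicable at this stage. The correct (and simpler) route, which the paper takes, is to observe that for fixed $\lambda$ the regularized equation has all terms $H$-valued ($A_\lambda$ and $\beta_\lambda(t,\cdot)$ are Lipschitz on $H$, uniformly in $(\omega,t)$), so $X_\lambda$ is a genuine $H$-valued semimartingale and the \emph{classical} integration-by-parts formula for $H$-valued semimartingales (\cite[\S 25]{Met}) yields the identity directly, with no convex-integrand hypothesis to check. The generalized formula of Proposition~\ref{prop:Ito} is needed only later, for the limit pair $(X,\xi)$, where the uniform bound on $\int_{D_T}\beta_\lambda(\cdot,X_\lambda)X_\lambda$ proved in this very lemma is what eventually supplies its hypothesis. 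Once you replace the appeal to Proposition~\ref{prop:Ito} by the classical formula, the rest of your argument goes through as written.
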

\begin{proof}
  The integration-by-parts formula for $H$-valued processes yields
  \begin{align*}
    &\frac{1}{2}\norm{X_\lambda}^2
      + \int_0^\cdot \ip[\big]{A_\lambda X_\lambda(s)}{X_\lambda(s)}\,ds 
      + \int_0^\cdot\!\!\int_D\beta(s,X_\lambda(s))X_\lambda(s)\,ds\\
    &\hspace{3em} = \frac{1}{2}\norm{X_0}^2 
      + \frac{1}{2}\bigl[G\cdot Z,G \cdot Z\bigr]
      + ({X_\lambda}_- G) \cdot Z,
  \end{align*}
  where ${X_\lambda}_-$ denotes the process $(X_\lambda)_-$.  Taking
  the supremum in time over $[0,T\mathclose[$, recalling the identity
  \[
    \ip[\big]{A_\lambda X_\lambda}{X_\lambda} =
    \ip[\big]{AJ_\lambda X_\lambda}{J_\lambda X_\lambda}
    + \lambda \norm[\big]{A_\lambda X_\lambda}^2,
  \]
  one has, by coercivity of $A$,
  \begin{align*}
    &\bigl( X_\lambda \bigr)_{T-}^{*2}
      + 2c \int_0^T \norm[\big]{J_\lambda X_\lambda(s)}_V^2\,ds
      + 2\int_0^T\!\!\int_D\beta_\lambda(s,X_\lambda(s))X_\lambda(s)\,dx\,ds\\
    &\hspace{3em} \leq \norm{X_0}^2 + \bigl[G\cdot Z,G \cdot Z\bigr]_{T-}
      + 2 \bigl( ({X_\lambda}_- G) \cdot Z \bigr)^*_{T-}.
  \end{align*}
  We are going to estimate the last two terms on the right-hand side
  of the last inequality. By definition of quadratic variation we have
  \begin{align*}
    [G\cdot Z,G \cdot Z]
    &= \norm{G \cdot Z}^2 - 2(G \cdot Z)_- \cdot (G\cdot Z)\\
    &= \norm{G\cdot Z}^2 - 2 \tilde{G} \cdot Z,
  \end{align*}
  where $\tilde{G}: \Omega \times[0,T] \to \cL(K,\erre) \simeq K$ is
  defined as $K \ni k \mapsto \ip[\big]{(G \cdot Z)_-}{Gk}$. By definition of
  control process and by the second inequality for stochastic
  integrals in \S\ref{ssec:int} we thus have 
  \begin{align*}
    \E[G\cdot Z,G \cdot Z]_{T-}
    &\leq \E \bigl( G \cdot Z \bigr)_{T-}^{*2}
    + 2 \E \bigl( \tilde{G} \cdot Z \bigr)_{T-}^*\\
    &\leq  \E \lambda_{T-}^C(G)
    + 4 \E \lambda^C_{T-}(\tilde{G})^{1/2},
  \end{align*}
  where, by elementary inequalities,
  \begin{align*}
    \E \lambda^C_{T-}(\tilde{G})^{1/2}
    &= \E C(T-)^{1/2} \bigl( \norm{\tilde{G}}_K^2 \cdot C \bigr)_{T-}^{1/2}\\
    &\leq \E C(T-)^{1/2} \biggl(%
      \int_0^{T-} \norm{(G \cdot Z)_-}^2 \norm{G}^2_{\cL(K,H)}
      \,dC \biggr)^{1/2}\\
    &\leq \E \lambda^C_{T-}(G)^{1/2} \, \bigl( G \cdot Z \bigr)^*_{T-}\\
    &\leq \frac12 \E \lambda^C_{T-}(G)
      + \frac12 \E \bigl( G \cdot Z \bigr)_{T-}^{*2}
      \leq \E \lambda^C_{T-}(G),
  \end{align*}
  so that $\E [G \cdot Z,G \cdot Z]_{T-} \leq 5 \E \lambda^C_{T-}(G)$.
  Similarly, one has
  \begin{align*}
    \E \bigl( ({X_\lambda}_- G) \cdot Z \bigr)^*_{T-}
    &\leq 2 \E C(T-)^{1/2} \biggl(%
    \int_0^{T-} \norm{X_\lambda}^2 \norm{G}_{\cL(K,H)}^2\,dC \biggr)^{1/2}\\
    &\leq 2 \E \lambda^C_{T-}(G)^{1/2} \, \bigl( X_\lambda \bigr)^*_{T-}\\
    &\leq \frac14 \E \bigl( X_\lambda \bigr)^{*2}_{T-} 
    + 4 \E \lambda^C_{T-}(G),
  \end{align*}
  therefore also
  \begin{align*}
    &\E \bigl( X_\lambda \bigr)^{*2}_{T-}%
    + \E\int_0^T \norm[\big]{J_\lambda X_\lambda(s)}_V^2 \,ds
    + \E\int_0^T\!\!\int_D \beta_\lambda(s,X_\lambda(s))X_\lambda(s)\,dx\,ds\\
    &\hspace{3em} \lesssim \E \norm{X_0}^2 + \E \lambda^C_{T-}(G),
  \end{align*}
  uniformly over $\lambda \in \mathopen]0,1\mathclose[$, as the
  implicit constant depends only on $c$, the coercivity constant of
  $A$. We conclude noting that $\beta_\lambda(\cdot, X_\lambda)X_\lambda\geq 0$
  by monotonicity of $\beta_\lambda$.
\end{proof}

We are going to establish an existence and uniqueness result for
\eqref{eq:add} under the additional assumption that
\begin{equation}
  \label{eq:G0}
  G: \Omega \times [0,T] \to \cL(K,V_0).
\end{equation}
This is only a technical ``temporary'' assumption that will be
dispensed of in the proof of Theorem~\ref{thm:add}.
\begin{prop}
  \label{prop:aux}
  Assume that the hypotheses of Theorem~\ref{thm:add} hold and that
  $G$ satisfies \eqref{eq:G0}. Then \eqref{eq:add} admits a unique
  strong solution.
\end{prop}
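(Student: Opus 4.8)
The plan is to pass to the limit as $\lambda\to0$ in the regularized equation \eqref{eq:reg}, using the bounds of Lemma~\ref{lm:est1} to extract weak limits and Proposition~\ref{prop:Ito} to identify the singular term. Writing $\xi_\lambda:=\beta_\lambda(\cdot,X_\lambda)$ and $\hat X_\lambda:=X_\lambda-\lambda\xi_\lambda=(I+\lambda\beta)^{-1}(\cdot,X_\lambda)$, so that $\xi_\lambda\in\beta(\cdot,\hat X_\lambda)$, and recalling $A_\lambda X_\lambda=AJ_\lambda X_\lambda$ with $X_\lambda-J_\lambda X_\lambda=\lambda A_\lambda X_\lambda$, the identity $\ip{A_\lambda X_\lambda}{X_\lambda}=\ip{AJ_\lambda X_\lambda}{J_\lambda X_\lambda}+\lambda\norm{A_\lambda X_\lambda}^2$ (used in the proof of Lemma~\ref{lm:est1}) together with that lemma shows that $\lambda\norm{A_\lambda X_\lambda}^2$ and $\lambda\xi_\lambda^2=\xi_\lambda X_\lambda-\xi_\lambda\hat X_\lambda\leq\xi_\lambda X_\lambda$ are bounded in $L^1(\Omega\times D_T)$. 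Hence $X_\lambda-J_\lambda X_\lambda\to0$ and $\hat X_\lambda-X_\lambda\to0$ strongly in $L^2(\Omega\times(0,T);H)$, while $J_\lambda X_\lambda$ is bounded in $L^2(\Omega\times(0,T);V)$ and $(X_\lambda)$ in $L^2(\Omega;L^\infty(0,T;H))$. Along a subsequence we thus get $J_\lambda X_\lambda,X_\lambda,\hat X_\lambda\rightharpoonup X$, weakly in $L^2(\Omega\times(0,T);V)$ for the first and in $L^2(\Omega\times(0,T);H)$ for the others, with $X\in L^2(\Omega;L^\infty(0,T;H))$ by weak-$*$ lower semicontinuity. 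For $(\xi_\lambda)$ I would invoke Lemma~\ref{valle_p_time}: since $\xi_\lambda\in\beta(\cdot,\hat X_\lambda)$ the Fenchel identity gives $j^*(\cdot,\xi_\lambda)\leq\xi_\lambda\hat X_\lambda\leq\xi_\lambda X_\lambda$, so $(j^*(\cdot,\xi_\lambda))$ is bounded in $L^1$; the uniform superlinearity of $j^*$ (the consequence of assumption~(J)(d) recorded in~{\S}\ref{sec:ass}) then yields uniform integrability, whence $\xi_\lambda\rightharpoonup\xi$ weakly in $L^1(\Omega\times D_T)$ along a further subsequence by the Dunford--Pettis theorem.

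Since $A\in\cL(V,V')$ induces a weakly continuous map $L^2(0,T;V)\to L^2(0,T;V')$ and Bochner integration is weakly continuous, passing to the limit in \eqref{eq:reg} yields, as an identity in $V'\cap L^1(D)$,
\[
  X+\int_0^\cdot AX(s)\,ds+\int_0^\cdot\xi(s)\,ds=X_0+G\cdot Z.
\]
By \eqref{eq:G0} the process $G\cdot Z$ is c\`adl\`ag with values in $V_0\subset H$, and the two drift integrals are time-continuous with values in $V'$ and $L^1(D)\embed V_0'$; hence $X$ admits a version that is c\`adl\`ag in $V_0'$, which combined with $X\in L^\infty(0,T;H)$ and Lemma~\ref{lm:strauss} shows $X$ is weakly c\`adl\`ag in $H$, so that $X_-$ is weakly predictable and $(X_-G)\cdot Z$ is well defined (its integrability being checked as in the proof of Proposition~\ref{prop:Ito}).

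It then remains to prove $\xi\in\beta(\cdot,X)$ a.e., i.e.\ by Fenchel--Young that $\E\int_0^T\!\!\int_D(j(\cdot,X)+j^*(\cdot,\xi))\leq\E\int_0^T\!\!\int_D\xi X$ (the reverse always holds). Weak lower semicontinuity of the convex integral functionals attached to $j$, $j^*$ together with $j(\cdot,\hat X_\lambda)+j^*(\cdot,\xi_\lambda)=\xi_\lambda\hat X_\lambda\leq\xi_\lambda X_\lambda$ gives in particular $j(\cdot,X),j^*(\cdot,\xi)\in L^1(D_T)$, so that Proposition~\ref{prop:Ito} applies to $X$, and the identification reduces to the energy estimate $\limsup_\lambda\E\int_0^T\!\!\int_D\xi_\lambda X_\lambda\leq\E\int_0^T\!\!\int_D\xi X$. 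I would derive the two members by writing the integration-by-parts formula for $X_\lambda$ and Proposition~\ref{prop:Ito} for $X$ and subtracting: the terms $\tfrac12\norm{X_0}^2$ and $\tfrac12[G\cdot Z,G\cdot Z]$ cancel, weak lower semicontinuity of $u\mapsto\E\norm{u}^2$ and of $u\mapsto\E\int_0^\cdot\ip{Au}{u}$ (via $\ip{A_\lambda X_\lambda}{X_\lambda}\geq\ip{AJ_\lambda X_\lambda}{J_\lambda X_\lambda}$) controls the norm and the $A$-terms in the right direction, and there remains only the difference of stochastic integrals $\E\big(((X_\lambda)_--X_-)G\big)\cdot Z$. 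This term is the crux: because $X_\lambda\to X$ only weakly, it need not vanish. I expect to resolve it by upgrading to convergence of $X_\lambda$ to $X$ in measure on $\Omega\times(0,T)$ --- obtained from the above bounds and the compact embedding $V\embed H$ via a compactness argument applied to $X_\lambda-G\cdot Z$, which has bounded variation, uniformly in $L^1(\Omega)$, with values in $V_0'$ --- and then applying the dominated convergence theorem for stochastic integrals, Proposition~\ref{prop:dom1}, to $((X_\lambda)_-G)\cdot Z$, the required domination being secured by a localization exploiting $\E\sup_{t<T}\norm{X_\lambda(t)}^2\leq N$. Making this compactness and domination rigorous in the c\`adl\`ag semimartingale setting is the principal technical difficulty; once $\xi\in\beta(\cdot,X)$ is established, $(X,\xi)$ is a strong solution.

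Uniqueness is much simpler and rests on the additive structure of the noise: if $(X^1,\xi^1)$ and $(X^2,\xi^2)$ are two strong solutions, the term $G\cdot Z$ cancels in the equation for the difference, so $X^1-X^2$ solves a noise-free equation and Proposition~\ref{prop:Ito} with $G=0$ applies (the compatibility condition $j(\cdot,a(X^1-X^2))+j^*(\cdot,a(\xi^1-\xi^2))\in L^1(D_T)$ holding for a small $a>0$ by convexity, the symmetry assumption~(J)(c), and $j^*(\cdot,\xi^i)\in L^1$), giving
\[
  \tfrac12\norm{X^1-X^2}^2+\int_0^\cdot\ip{A(X^1-X^2)}{X^1-X^2}\,ds
  +\int_0^\cdot\!\!\int_D(\xi^1-\xi^2)(X^1-X^2)\,dx\,ds=0.
\]
Coercivity of $A$ and monotonicity of $\beta$ make the last two terms nonnegative, forcing $X^1=X^2$ and then $\xi^1=\xi^2$.
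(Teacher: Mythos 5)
Your overall architecture (Yosida regularization, weak compactness from Lemma~\ref{lm:est1}, identification of $\xi$ via a Fenchel--Young/limsup energy argument, uniqueness by subtraction) is a legitimate and genuinely different route from the paper's, which instead works \emph{pathwise}: assumption \eqref{eq:G0} is used there to treat the regularized equation as a deterministic evolution equation for $X_\lambda-G\cdot Z\in H^1(0,T;V')$ for each fixed $\omega$, so that Simon's compactness criterion yields \emph{strong} convergence $X_{\lambda'}\to X$ in $L^2(0,T;H)$ along an $\omega$-dependent subsequence, and the inclusion $\xi\in\beta(\cdot,X)$ is then obtained by a purely deterministic argument (Severini--Egorov plus the pointwise subdifferential inequality and the Moreau--Yosida identity), with no stochastic integral ever entering the identification. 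The price the paper pays is a separate measurability step (its Step~4); the price you pay is the one you yourself flag, and it is where your proof has a genuine gap. You must show $\E\bigl(((X_\lambda)_--X_-)G\bigr)\cdot Z\to 0$, i.e.\ convergence of \emph{expectations} of stochastic integrals. Proposition~\ref{prop:dom1} only delivers convergence in probability, uniformly in time; to upgrade this to convergence of expectations you would need uniform integrability of the family $\bigl(((X_\lambda)_-G)\cdot Z\bigr)^*_{T-}$, and the available bounds do not give it: the natural estimate $\lambda^C_{T-}((X_\lambda)_-G)\leq (X_\lambda)^{*2}_{T-}\,\lambda^C_{T-}(G)$ is a product of two random variables each controlled only in $L^1(\Omega)$, so neither the $L^2$ isometry-type bound nor the $L^1$ maximal inequality $\E(Y\cdot Z)^*_{T-}\lesssim\E(\lambda^C_{T-}(Y))^{1/2}$ closes the argument without circularly assuming strong convergence of $(X_\lambda-X)^*_{T-}$. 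As written, the crux of your existence proof is therefore a plan rather than a proof, and it is precisely the obstruction that motivates the paper's pathwise detour.

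Two further points. First, Definition~\ref{def:sol} requires $X$ to be c\`adl\`ag in $H$ (strongly), whereas you only establish weak c\`adl\`ag regularity via Lemma~\ref{lm:strauss}; the paper's Step~6 closes this by applying Proposition~\ref{prop:Ito} to deduce that $t\mapsto\norm{X(t)}^2$ is c\`adl\`ag and combining this with weak right-continuity and the radial convexity of the Hilbert norm --- you would need to add this step. Second, your limsup argument also needs $X_\lambda(T)\rightharpoonup X(T)$ in $L^2(\Omega;H)$ (to get $\liminf\E\norm{X_\lambda(T)}^2\geq\E\norm{X(T)}^2$), which does not follow from weak convergence in $L^2(\Omega\times(0,T);H)$ alone and must be extracted from the equation at the terminal time. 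Your uniqueness argument coincides with the paper's and is fine.
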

For the proof we need further a priori estimates on the solution to
the regularized equation \eqref{eq:reg}.
\begin{lemma}
  \label{lm:est2}
  Let $G$ satisfy \eqref{eq:G0}. There exists $\Omega' \in \cF$ with
  $\P(\Omega')=1$ such that, for every $\omega \in \Omega'$, the
  following properties hold:
  \begin{itemize}
  \item[(a)] $\bigl(X_\lambda(\omega)\bigr)$ is bounded in
    $L^\infty(0,T;H)$;
  \item[(b)] $\bigl(J_\lambda X_\lambda(\omega)\bigr)$ is bounded in
    $L^2(0,T;V)$;
  \item[(c)] $\bigl(\lambda^{1/2} A_\lambda X_\lambda(\omega)\bigr)$
    is bounded in $L^2(0,T;H)$;
  \item[(d)]
    $\bigl(\beta_\lambda(\cdot,X_\lambda(\omega))X_\lambda(\omega)\bigr)$
    is bounded in $L^1([0,T] \times D)$.
  \end{itemize}
\end{lemma}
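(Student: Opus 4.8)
The plan is to exploit the ``deterministic equation with random coefficients'' reading of \eqref{eq:reg}: fixing $\omega$ outside a suitable null set, I would derive a \emph{pathwise} energy estimate in which the stochastic integral $W := G \cdot Z$ is treated as a given forcing path. The decisive point is that, under \eqref{eq:G0}, $W$ takes values in $V_0 \hookrightarrow V \cap L^\infty(D)$, so that $\omega$-by-$\omega$ both $\sup_{t\leq T}\norm{W(t)}_V$ and $\sup_{t\leq T}\norm{W(t)}_{L^\infty(D)}$ are finite. I would therefore choose $\Omega' \in \cF$ with $\P(\Omega')=1$ on which $X_0 \in H$ and $W(\omega,\cdot)$ is a c\`adl\`ag $V_0$-valued path with $\sup_{t\leq T}\norm{W(\omega,t)}_{V_0} < \infty$; this is legitimate since, $G$ being a $\cL(K,V_0)$-valued process in $\cS_C(Z)$ with $\E\lambda^C_{T-}(G)<\infty$, the maximal inequality of \S\ref{ssec:int} read in $V_0$ gives $G\cdot Z \in \mathbb{S}^2$ as a $V_0$-valued process.

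For such $\omega$, set $Y_\lambda := X_\lambda - W$. Since the integral terms in \eqref{eq:reg} are Lipschitz in time (the integrand $A_\lambda X_\lambda+\beta_\lambda(\cdot,X_\lambda)$ is bounded in $H$ along the fixed bounded c\`adl\`ag path $X_\lambda$, as $A_\lambda\in\cL(H)$ and $\beta_\lambda$ is Lipschitz with $\beta_\lambda(\cdot,0)=0$), the process $Y_\lambda$ is absolutely continuous with values in $H$ and satisfies, a.e.\ in time, $Y_\lambda' + A_\lambda X_\lambda + \beta_\lambda(\cdot,X_\lambda) = 0$ with $Y_\lambda(0)=X_0$ (using $W(0)=0$). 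The chain rule for $\norm{Y_\lambda}^2$, together with $Y_\lambda = X_\lambda - W$ to split the two drift pairings, then yields a.e.\ in $[0,T]$
\[
  \tfrac12 \tfrac{d}{dt}\norm{Y_\lambda}^2 + \ip{A_\lambda X_\lambda}{X_\lambda} + \int_D \beta_\lambda(\cdot,X_\lambda)X_\lambda\,dx = \ip{A_\lambda X_\lambda}{W} + \int_D \beta_\lambda(\cdot,X_\lambda)W\,dx.
\]
On the left I would invoke the coercivity identity $\ip{A_\lambda u}{u} = \ip{AJ_\lambda u}{J_\lambda u} + \lambda\norm{A_\lambda u}^2 \geq c\norm{J_\lambda u}_V^2 + \lambda\norm{A_\lambda u}^2$ and the monotonicity bound $\beta_\lambda(\cdot,X_\lambda)X_\lambda \geq 0$.

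The two cross terms on the right are where \eqref{eq:G0} is essential, and bounding the second one is the main obstacle. For the first, since $A_\lambda X_\lambda = AJ_\lambda X_\lambda \in V'$ and $W \in V$, Young's inequality gives $\ip{A_\lambda X_\lambda}{W} \leq \norm{A}_{\cL(V,V')}\norm{J_\lambda X_\lambda}_V\norm{W}_V \leq \tfrac{c}{2}\norm{J_\lambda X_\lambda}_V^2 + C\norm{W}_V^2$, to be absorbed into the coercivity term. For the second, writing $m := \norm{W(t)}_{L^\infty(D)}$, I would use only the sign and boundedness of $\beta_\lambda$: pointwise in $D$, where $|X_\lambda| > 2m$ one has $|W| \leq m \leq \tfrac12|X_\lambda|$ and $\beta_\lambda$ has the sign of its argument, whence $\beta_\lambda(\cdot,X_\lambda)W \leq \tfrac12\beta_\lambda(\cdot,X_\lambda)X_\lambda$; where $|X_\lambda| \leq 2m$, the uniform boundedness of $\beta_\lambda$ on bounded sets inherited from (J)(e) bounds $\beta_\lambda(\cdot,X_\lambda)W$ by a constant depending only on $m$. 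Integrating over $D$ gives $\int_D \beta_\lambda(\cdot,X_\lambda)W\,dx \leq \tfrac12\int_D \beta_\lambda(\cdot,X_\lambda)X_\lambda\,dx + C(m)|D|$, whose first summand is absorbed on the left.

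After these absorptions no bad term and, crucially, no term proportional to $\norm{Y_\lambda}^2$ survives, so Gronwall's lemma is not even needed: integrating in time and using $\sup_{t\leq T}(\norm{W(\omega,t)}_V + \norm{W(\omega,t)}_{L^\infty(D)}) < \infty$ produces a finite $N(\omega)$, uniform in $\lambda \in \mathopen]0,1\mathclose[$ and $t$, controlling $\norm{Y_\lambda(t)}^2$, $\int_0^T\norm{J_\lambda X_\lambda}_V^2$, $\lambda\int_0^T\norm{A_\lambda X_\lambda}^2$, and $\int_0^T\!\!\int_D\beta_\lambda(\cdot,X_\lambda)X_\lambda$. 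Then (a) follows from $\norm{X_\lambda} \leq \norm{Y_\lambda} + \norm{W}$, (b)--(c) are immediate, and (d) from nonnegativity of the integrand. The sole genuine difficulty is the $L^\infty$-based estimate of $\int_D\beta_\lambda(\cdot,X_\lambda)W$, which is exactly where \eqref{eq:G0} enters; I note that it rests only on the sign and boundedness of $\beta_\lambda$ and not on the symmetry condition (J)(c).
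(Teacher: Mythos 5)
Your proof is correct, and its skeleton --- fixing $\omega$ in a full-measure set on which $W:=G\cdot Z$ is an essentially bounded $V_0$-valued path, subtracting $W$ from $X_\lambda$, and running a pathwise energy estimate using the identity $\ip{A_\lambda u}{u}=\ip{AJ_\lambda u}{J_\lambda u}+\lambda\norm{A_\lambda u}^2$ and coercivity --- is exactly the paper's. The one genuinely different ingredient is your treatment of the singular cross term $\int_D\beta_\lambda(\cdot,X_\lambda)W\,dx$, which is indeed the crux: you split $D$ according to whether $\abs{X_\lambda}$ exceeds $2\norm{W(t)}_{L^\infty(D)}$, using only the sign of $\beta_\lambda$ (from monotonicity and $\beta_\lambda(\cdot,0)=0$) on the first set and, on the second, the uniform boundedness of $\beta_\lambda$ on bounded sets inherited from (J)(e) via the bound of $\abs{\beta_\lambda}$ by the minimal section of $\beta$. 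The paper instead applies the Fenchel--Young inequality $ab\leq j^*(\varepsilon a)+j(b/\varepsilon)\leq\varepsilon j^*(a)+j(b/\varepsilon)$, absorbs $\varepsilon j^*(\cdot,\beta_\lambda(\cdot,X_\lambda))$ on the left through $j^*(\cdot,\beta_\lambda(x))\leq\beta_\lambda(x)x$, and controls $\int_{D_T} j(\cdot,W/\varepsilon)$ by the boundedness of $j$ on bounded sets; both routes use (J)(e) and neither uses (J)(c), so your closing remark is accurate but does not mark a difference from the paper. Your version is more elementary and delivers item (d) directly, whereas the paper's final display bounds $\norm{j^*(\cdot,\beta_\lambda(\cdot,X_\lambda))}_{L^1(D_T)}$, a quantity invoked later in Step~2 of the proof of Proposition~\ref{prop:aux}; with your argument that bound is still immediate from $j^*(\cdot,\beta_\lambda(x))\leq\beta_\lambda(x)x$ together with (d), so nothing downstream is lost. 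The only place where you are no more careful than the paper (but also no less) is the claim that $G\cdot Z$ is a c\`adl\`ag $V_0$-valued process with bounded paths: strictly this requires constructing the integral in $V_0$, hence finiteness of $\lambda^C(G)$ computed with the $\cL(K,V_0)$-norm, which is not literally among the hypotheses; it does hold in the only situation where the lemma is applied, namely $G^n=T_nG$ with $\norm{G^n}_{\cL(K,V_0)}\leq\norm{T_n}_{\cL(H,V_0)}\norm{G}_{\cL(K,H)}$.
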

\begin{proof}
  Thanks to assumption \eqref{eq:G0}, there exists $\Omega' \in \cF$,
  with $\P(\Omega')=1$, such that
  \[
    (G \cdot Z)(\omega) \in L^\infty(0,T;V_0) \qquad \forall
    \omega \in \Omega'.
  \]
  Let $\omega \in \Omega'$ be arbitrary but fixed, so that indication
  of the explicit dependence on $\omega$ of the various processes
  involved will be suppressed for compactness of notation.  By
  inspection of \eqref{eq:reg} it follows that
  $X_\lambda- G \cdot Z \in H^1(0,T;V')$, so that we can write
  \[
  \frac{d}{dt}(X_\lambda-G\cdot Z) + A_\lambda X_\lambda +
  \beta_\lambda(\cdot, X_\lambda)= 0
  \]
  as an identity in $V'$ which holds for a.a.
  $t \in \mathopen]0,T\mathclose[$. The (deterministic)
  integration-by-parts formula then yields
  \begin{align*}
    &\frac12 \norm[\big]{X_\lambda - G \cdot Z}^2
      + \int_0^\cdot \ip[\big]{A_\lambda X_\lambda(s)}%
      {X_\lambda(s)-G \cdot Z(s)}\,ds\\
    &\hspace{3em} + \int_0^\cdot\!\!\int_D \beta_\lambda(s,X_\lambda(s))%
      (X_\lambda(s)-G \cdot Z(s))\,dx\,ds = \frac12 \norm{X_0}^2,
  \end{align*}
  where (i) by the triangle inequality and the elementary inequality
  $(a-b)^2/2 \geq \frac14a^2-\frac12b^2$, $a$, $b \in \erre$, one has
  \[
    \frac12 \norm[\big]{X_\lambda - G \cdot Z}^2 \geq
    \frac12 \bigl( \norm{X_\lambda} - \norm{G \cdot Z} \bigr)^2
    \geq \frac14\norm{X_\lambda}^2 - \frac12\norm{G \cdot Z}^2;
  \]
  (ii) one has, for any $h \in H$,
  $\ip{A_\lambda h}{h} = \ip{AJ_\lambda h}{J_\lambda h} +
  \lambda\norm{A_\lambda h}^2$, so that, by coercivity of $A$ and
  Young's inequality in the form
  $ab \leq \varepsilon a^2 + b^2/\varepsilon$, $a,b \in \erre$,
  $\varepsilon>0$, it follows that
  \begin{align*}
    \ip[\big]{A_\lambda X_\lambda}{X_\lambda - G \cdot Z}
    &= \ip[\big]{A_\lambda X_\lambda}{X_\lambda}
      - \ip[\big]{AJ_\lambda X_\lambda}{G \cdot Z}\\
    &\geq c \norm[\big]{J_\lambda X_\lambda}_V^2
      + \lambda \norm[\big]{A_\lambda X_\lambda}^2\\
    &\quad - \varepsilon \norm[\big]{A}^2_{\cL(V,V')}%
      \norm[\big]{J_\lambda X_\lambda}_V^2%
      + \frac{1}{\varepsilon} \norm[\big]{G \cdot Z}_{V}^2;
  \end{align*}
  (iii) one has, for any $x \in \erre$, slightly simplifying notation,
  \begin{align*}
    \beta_\lambda(x)x
    &= \beta_\lambda(x) (I+\lambda\beta)^{-1}(x)
      + \beta_\lambda(x) \bigl(x - (I+\lambda\beta)^{-1}(x) \bigr)\\
    &= \beta_\lambda(x) (I+\lambda\beta)^{-1}(x)
      + \lambda \abs[\big]{\beta_\lambda(x)}^2,
  \end{align*}
  hence also, recalling that
  $\beta_\lambda \in \beta \circ (I+\lambda\beta)^{-1}$ and that, for
  any $a, b \in \erre$, $ab = j(a) + j^*(b)$ if and only if
  $b \in \partial j(a) = \beta(a)$,
  \[
    \beta_\lambda(X_\lambda)X_\lambda \geq
    j\bigl((I+\lambda\beta)^{-1}(X_\lambda\bigr)
    + j^*\bigl(\beta_\lambda(X_\lambda)\bigr)
    \geq j^*\bigl(\beta_\lambda(X_\lambda)\bigr);
  \]
  (iv) Young's inequality in the form
  \[
    ab \leq j^*(\varepsilon a) + j(b/\varepsilon)
    \leq \varepsilon j^*(a) + j(b/\varepsilon),
    \qquad  \,a,b \in \erre, \; 0<\varepsilon<1,
  \]
  implies
  \[
    - \beta_\lambda(\cdot,X_\lambda) (G \cdot Z) \geq
    - \varepsilon j^*\bigl(\cdot,\beta_\lambda(\cdot,X_\lambda)\bigr)
    - j\bigl(\cdot,(G \cdot Z)/\varepsilon\bigr).
  \]
  Choosing $\varepsilon<1$, it follows from (i)--(iv) that
  \begin{align*}
    &\frac14\norm[\big]{X_\lambda}^2
      + c \int_0^\cdot \norm[\big]{J_\lambda X_\lambda(s)}_V^2\,ds
      + \lambda \int_0^\cdot \norm[\big]{A_\lambda X_\lambda(s)}^2\,ds\\
    &\hspace{3em} + \int_0^\cdot\!\!\int_D
      j^*\bigl(s,\beta_\lambda(s,X_\lambda(s))\bigr) \,dx\,ds\\
    &\leq \frac12\norm{X_0}^2 + \frac12\norm[\big]{G\cdot Z}^2\\
    &\hspace{3em}  + \varepsilon \norm[\big]{A}_{\cL(V,V')}
      \int_0^\cdot \norm[\big]{J_\lambda X_\lambda(s)}^2_V\,ds
      + \frac{1}{\varepsilon} \int_0^\cdot \norm[\big]{(G \cdot Z)_s}^2_V\,ds\\
    &\hspace{3em} + \varepsilon \int_0^\cdot\!\!\int_D
      j^*\bigl(s, \beta_\lambda(s,X_\lambda(s))\bigr)\,dx\,ds
      + \int_0^\cdot\!\!\int_D 
      j\bigl(s, (G \cdot Z)_s/\varepsilon\bigr)\,dx\,ds.
  \end{align*}
  First rearranging terms and choosing $\varepsilon$ sufficiently
  small, then taking the essential supremum in time, one gets
  \begin{align*}
    &\norm[\big]{X_\lambda}^2_{L^\infty(0,T;H)}
      + \norm[\big]{J_\lambda X_\lambda}^2_{L^2(0,T;V)}
      + \lambda \norm[\big]{A_\lambda X_\lambda}^2_{L^2(0,T; H)}
    +\norm[\big]{j^*(\cdot, \beta_\lambda(\cdot,X_\lambda))}_{L^1(D_T)}\\
    &\hspace{3em} \lesssim \norm[\big]{X_0}^2
      + \norm[\big]{G\cdot Z}_{L^2(0,T;V)}^2
      + \int_{D_T} j\bigl(s, (G \cdot Z)_s/\varepsilon\bigr)\,dx\,ds,
  \end{align*}
  where the right-hand side is finite because
  $G \cdot Z \in L^\infty(0,T;V_0)$. In fact, recalling that $V_0$ is
  continuously embedded in $V$, this immediately implies that
  $G \cdot Z \in L^2(0,T;V)$; moreover, there exists
  $D'_T \subset D_T$, with $D_T \setminus D'_T$ of measure zero, such
  that the restriction of $G \cdot Z$ to $D'_T$ is bounded. The
  finiteness of the last term on the right-hand side then follows by
  the boundedness on bounded sets of $y \mapsto j(\omega,t,y)$
  uniformly over $(\omega,t) \in \Omega \times [0,T]$.
\end{proof}

The pathwise boundedness properties just proved entail several
compactness properties in suitable topologies.
\begin{lemma}
  \label{lm:cvg}
  Let $G$ satisfy \eqref{eq:G0}. There exists $\Omega'\in\cF$ with
  $\P(\Omega')=1$ such that, for every $\omega \in \Omega'$, there
  exist a subsequence $\lambda'=\lambda'(\omega)$ of $\lambda$ and
  \[
    X(\omega) \in L^\infty(0,T;H) \cap L^2(0,T;V),
    \qquad \xi(\omega) \in L^1([0,T] \times D)
  \]
  such that 
  \begin{alignat*}{3}
    X_{\lambda'}(\omega,\cdot) &\longto X(\omega,\cdot) \quad
    && \text{weakly* in } L^{\infty}(0,T;H),\\
    X_{\lambda'}(\omega,\cdot) &\longto X(\omega,\cdot) \quad
    && \text{in } L^2(0,T;H),\\
    J_{\lambda'} X_{\lambda'}(\omega,\cdot) &\longto X(\omega,\cdot)  \quad
    && \text{weakly in } L^{2}(0,T;V),\\
    \beta_{\lambda'}(\cdot,X_{\lambda'}(\omega,\cdot)) &\longto \xi(\omega,\cdot)
    \qquad && \text{weakly in } L^1([0,T] \times D).
  \end{alignat*}
\end{lemma}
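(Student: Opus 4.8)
The plan is to read the weak limits off the pathwise bounds of Lemma~\ref{lm:est2}, to identify them, and finally to upgrade the $L^2(0,T;H)$-convergence of $(X_\lambda)$ to a strong one by a compactness argument. Throughout we work with $\omega$ fixed in the full-measure set $\Omega'$ of Lemma~\ref{lm:est2}, on which moreover $G \cdot Z \in L^\infty(0,T;V_0)$; the extracted subsequence $\lambda'$ is thus allowed to depend on $\omega$, and no measurability issue arises since the statement only asserts pathwise existence of limits. Since $X_\lambda$ is bounded in $L^\infty(0,T;H) = (L^1(0,T;H))'$ by (a) and $J_\lambda X_\lambda$ is bounded in the reflexive space $L^2(0,T;V)$ by (b), the Banach--Alaoglu theorem yields a subsequence $\lambda'$ along which $X_{\lambda'} \rightharpoonup X$ in the weak-$*$ topology of $L^\infty(0,T;H)$ and $J_{\lambda'} X_{\lambda'} \rightharpoonup \chi$ weakly in $L^2(0,T;V)$. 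These two limits coincide: from $J_\lambda = (I+\lambda A_2)^{-1}$ one gets $X_\lambda - J_\lambda X_\lambda = \lambda A_\lambda X_\lambda = \lambda^{1/2}\,(\lambda^{1/2} A_\lambda X_\lambda)$, which tends to $0$ in $L^2(0,T;H)$ by (c); hence $X_{\lambda'}$ and $J_{\lambda'} X_{\lambda'}$ share the same weak $L^2(0,T;H)$-limit, so $\chi = X$ and in particular $X \in L^\infty(0,T;H) \cap L^2(0,T;V)$. This already gives the first and third asserted convergences.

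For the drift I would argue by uniform integrability. Taking $z=0$ in the definition of $\beta$ as a subdifferential and using $j \geq 0$, one has the pointwise bound $\beta_\lambda(x)x \geq j^*(\beta_\lambda(x))$, so (d) shows that $j^*(\cdot,\beta_\lambda(\cdot,X_\lambda))$ is bounded in $L^1(D_T)$, uniformly in $\lambda$. Since $j^*(\omega,t,\cdot)$ is superlinear at infinity, uniformly in $(\omega,t)$ (a consequence of assumption (J)(d)), the fixed-$\omega$ form of Lemma~\ref{valle_p_time}, applied on $(0,T) \times D$, shows that $(\beta_\lambda(\cdot,X_\lambda))_\lambda$ is uniformly integrable on $D_T$; being bounded in $L^1$, it is then relatively weakly compact in $L^1(D_T)$ by the Dunford--Pettis theorem, and a further subsequence satisfies $\beta_{\lambda'}(\cdot,X_{\lambda'}) \rightharpoonup \xi$ weakly in $L^1(D_T)$, which is the last asserted convergence.

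The main obstacle is the second, \emph{strong}, convergence $X_{\lambda'} \to X$ in $L^2(0,T;H)$, which needs compactness in time. The key device is the space $W := V_0'$: since $V_0 \hookrightarrow V \cap L^\infty(D)$, one has continuous embeddings $V' \hookrightarrow W$, $L^1(D) \hookrightarrow W$ and $H \hookrightarrow W$, and $V \hookrightarrow\hookrightarrow H \hookrightarrow W$ is an Aubin--Lions triple. Setting $Y_\lambda := X_\lambda - G \cdot Z$, equation \eqref{eq:reg} gives $\tfrac{d}{dt} Y_\lambda = -A J_\lambda X_\lambda - \beta_\lambda(\cdot,X_\lambda)$, whose first summand is bounded in $L^2(0,T;V')$ by (b) and whose second summand is bounded in $L^1(0,T;L^1(D))$ by (d); hence $\tfrac{d}{dt} Y_\lambda$ is bounded in $L^1(0,T;W)$. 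I would then run a compactness criterion of Aubin--Lions--Simon type on $(J_\lambda X_\lambda)$, which is bounded in $L^2(0,T;V)$: writing $J_\lambda X_\lambda = Y_\lambda + G \cdot Z - \lambda A_\lambda X_\lambda$, the uniform smallness of the time-translations in $L^1(0,T;W)$ follows from the bound on $\tfrac{d}{dt} Y_\lambda$, from the $L^1(0,T;W)$-translation continuity of the fixed function $G \cdot Z$, and from $\lambda A_\lambda X_\lambda \to 0$ in $L^2(0,T;H) \hookrightarrow L^2(0,T;W)$; using the uniform $L^\infty(0,T;H)$-bound one may interpolate this into an $L^2(0,T;W)$-translation estimate if desired. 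Together with the compact embedding $V \hookrightarrow\hookrightarrow H$, this yields relative compactness of $(J_\lambda X_\lambda)$ in $L^2(0,T;H)$.

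Extracting once more, $J_{\lambda'} X_{\lambda'}$ converges strongly in $L^2(0,T;H)$; its limit must equal the weak limit $X$, and since $X_\lambda - J_\lambda X_\lambda = \lambda A_\lambda X_\lambda \to 0$ in $L^2(0,T;H)$ we conclude $X_{\lambda'} \to X$ strongly in $L^2(0,T;H)$, completing all four convergences. I expect the genuinely delicate points to be exactly these two: that $X_\lambda$ itself is \emph{not} bounded in $L^2(0,T;V)$ --- only its resolvent regularization $J_\lambda X_\lambda$ is --- so the compactness must be run on $J_\lambda X_\lambda$ and transferred back via $\lambda A_\lambda X_\lambda \to 0$; and that the merely $L^1$ integrability of the singular drift forces one to measure the time-derivative in the weak space $W = V_0'$ rather than in $V'$.
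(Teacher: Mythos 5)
Your proposal is correct and follows essentially the same route as the paper: Banach--Alaoglu for the weak and weak-$*$ limits, identification of the limits via $\lambda A_\lambda X_\lambda\to 0$ in $L^2(0,T;H)$, the generalized de la Vall\'ee-Poussin criterion plus Dunford--Pettis for the weak $L^1(D_T)$ convergence of $\beta_\lambda(\cdot,X_\lambda)$, and an Aubin--Lions--Simon compactness argument with the time derivative of $X_\lambda-G\cdot Z$ measured in $V_0'$ to upgrade to strong $L^2(0,T;H)$ convergence. The only (cosmetic) difference is that you run the compactness criterion on $J_\lambda X_\lambda$ and transfer back, whereas the paper applies Simon's criterion directly to $X_\lambda-G\cdot Z$; both are fine, and your bound $\beta_\lambda(x)x\geq j^*(\beta_\lambda(x))$ is the one established in part (iii) of the proof of Lemma~\ref{lm:est2} (via the Fenchel equality for $\beta_\lambda(x)\in\beta((I+\lambda\beta)^{-1}x)$, rather than by taking $z=0$).
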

\begin{proof}
  Let $\Omega'$ be as in Lemma~\ref{lm:est2} and $\omega \in \Omega'$
  arbitrary but fixed (whose indication will still be omitted).  Since
  $(X_\lambda)$ is bounded in $L^\infty(0,T;H)$, hence also in
  $L^2(0,T;H)$, there exist $X \in L^\infty(0,T;H)$ and a subsequence
  $\lambda'$, depending on $\omega$, such that $X_{\lambda'}$
  converges weakly* to $X$ in $L^\infty(0,T;H)$ and weakly in
  $L^2(0,T;H)$. The boundedness of $(J_\lambda X_\lambda)$ in
  $L^2(0,T;V)$ implies that there exists $\bar{X} \in L^2(0,T;V)$ such
  that $J_{\lambda'}X_{\lambda'}$ converges weakly to $\bar{X}$ in
  $L^2(0,T;V)$. Boundedness of $(\sqrt{\lambda}A_\lambda X_\lambda)$
  in $L^2(0,T;H)$ implies that $\lambda A_\lambda X_\lambda$ converges
  to zero in $L^2(0,T;H)$. Writing
  \[
    J_\lambda X_\lambda = X_\lambda - \lambda A_\lambda X_\lambda,
  \]
  one immediately infers that $J_{\lambda'}X_{\lambda'}$ converges
  weakly to $X$ in $L^2(0,T;H)$. Since it also converges weakly to
  $\bar{X}$ in $L^2(0,T;V)$, it follows that $\bar{X}=X$.
  
  The same argument used in part (iii) of the proof of
  Lemma~\ref{lm:est2} yields
  \[
    j^*(t,\beta_\lambda(t,X_\lambda)) \leq
    \beta_{\lambda}(t,X_\lambda)X_\lambda,
  \]
  where the right-hand side, as a family indexed by $\lambda$, is
  bounded in $L^1(D_T)$. The generalized de la Vall\'ee-Poussin
  criterion of Lemma~\ref{valle_p_time} then ensures that
  $(\beta_\lambda(\cdot,X_\lambda))$ is uniformly integrable in $D_T$
  and hence relatively weakly compact in $L^1(D_T)$ by the
  Dunford-Pettis theorem, i.e. there exists $\xi \in L^1(D_T)$ such
  that $\beta_{\lambda'}(\cdot,X_{\lambda'})$ converges weakly to
  $\xi$ in $L^1(D_T)$.

  As a last step, we are going to show that $X_{\lambda'}$ converges
  to $X$ in the norm topology of $L^2(0,T;H)$, rather than just in its
  weak topology. Writing the regularized equation as in
  Lemma~\ref{lm:est2}, we have
  \[
    \frac{d}{dt}(X_{\lambda}-G\cdot Z)+A_\lambda X_{\lambda}+
    \beta_{\lambda}(\cdot, X_{\lambda})=0,
  \]
  where $A_\lambda X_\lambda=AJ_\lambda X_\lambda$ is bounded in
  $L^2(0,T;V')$ and $\beta_{\lambda}(\cdot, X_{\lambda})$ is bounded
  in $L^1(D_T)$. Therefore $\frac{d}{dt}(X_{\lambda} - G\cdot Z)$ is
  bounded in $L^1(0,T;V_0')$, and Simon's compactness criterion
  (see~\cite[Corollary~4, p.~85]{simon}) implies that
  $(X_{\lambda} - G \cdot Z)$ is relatively compact in $L^2(0,T;H)$.
  Since $G \cdot Z \in L^2(0,T;H)$ is independent of $\lambda$, the
  same conclusion holds for $(X_\lambda)$ and by uniqueness of the
  weak limit in $L^2(0,T;H)$ it immediately follows that $X_\lambda$
  converges to $X$ in $L^2(0,T;H)$.
\end{proof}

The last lemma provides us with a pair $(X,\xi)$ of (potentially
non-measurable) processes that serves as candidate solution to
\eqref{eq:add}.
\begin{proof}[Proof of Proposition~\ref{prop:aux}]
  We split the proof in several steps. We use the same symbols used in
  the proofs of the previous lemmata, without recalling their
  definitions explicitly.
  \smallskip\par\noindent
  \textsc{Step 1.} We are going to pass to the limit on each
  trajectory $\omega \in \Omega'$ in the regularized equation
  \[
    X_\lambda + \int_0^\cdot A_\lambda X_\lambda(s)\,ds
    + \int_0^\cdot \beta_\lambda(s,X_\lambda(s))\,ds
    = X_0 + G \cdot Z
  \]
  along the subsequence $\lambda'$. Let then $\omega$ be fixed and
  let us omit its explicit indication. By Lemma~\ref{lm:cvg} and the
  linearity of $A$, one has
  \[ 
    \int_0^t A_{\lambda'}X_{\lambda'}(s)\,ds \longto \int_0^t AX(s)\,ds
    \qquad \text{weakly in } V',
  \]
  hence also weakly in $V_0'$, for every $t \in [0,T]$.  Indeed, for
  any $\varphi \in V$ the map
  $\psi:=s \mapsto \mathbbm{1}_{[0,t]}(s) \varphi$ belongs to
  $L^2(0,T;V)$ and
  \begin{align*}
    \ip[\Big]{\varphi}{\int_0^t A_{\lambda'} X_{\lambda'}(s)\,ds}
    &= \int_0^T \ip[\big]{AJ_{\lambda'}X_{\lambda'}(s)}{\psi(s)}\,ds\\
    &\to \int_0^T\ip[\big]{AX(s)}{\psi(s)}\,ds
      = \ip[\Big]{\varphi}{\int_0^t AX(s)\,ds}.
  \end{align*}
  The same argument yields, choosing $\varphi \in L^\infty(D)$ or
  $\varphi \in V_0$, that
  \[
    \int_0^t \beta_{\lambda'}(s,X_{\lambda'}(s))\,ds \longto
    \int_0^t \xi(s)\,ds
  \]
  weakly in $L^1(D)$ and weakly in $V_0'$ for all $t \in
  [0,T]$. Therefore, for every $t \in [0,T]$, there exists
  $\tilde{X}(t) \in V_0'$ such that $X_{\lambda'}(t)$ converges to
  $\tilde{X}(t)$ weakly in $V_0'$. From this it easily follows that
  $X_{\lambda'}$ converges to $\tilde{X}$ weakly* in
  $L^\infty(0,T;V_0')$. In fact, for any $\psi \in L^1(0,T;V_0)$, one
  has
  $\ip[\big]{X_{\lambda'}(s)}{\psi(s)} \to
  \ip[\big]{\tilde{X}(s)}{\psi(s)}$ for a.a. $s \in [0,T]$, and
  \[
    \abs[\big]{\ip[\big]{X_{\lambda'}(s)}{\psi(s)}} \lesssim
    \norm[\big]{X_\lambda}_{L^\infty(0,T;H)} \norm[\big]{\psi(s)}_{V_0},
  \]
  where the right-hand side, as a function of $s$, belongs to
  $L^1(0,T)$. Then
  \[
    \int_0^T \ip[\big]{X_{\lambda'}(s)}{\psi(s)}\,ds \longto
    \int_0^T \ip[\big]{\tilde{X}(s)}{\psi(s)}\,ds
  \]
  by the dominated convergence theorem. However, since $X_{\lambda'}$
  converges to $X$ weakly* in $L^\infty(0,T;H)$, we infer that
  $X=\tilde{X}$ in $L^\infty(0,T;H)$. Therefore, taking the limit
  along $\lambda'$, we get
  \[
    X + \int_0^\cdot AX(s)\,ds + \int_0^\cdot \xi(s)\,ds =
    X_0 + G \cdot Z \qquad \text{ in } V_0'.
  \]
  This in turn implies that $X$ is c\`adl\`ag in $V_0'$, and since it
  also belongs to $L^\infty(0,T;H)$, it follows by
  Lemma~\ref{lm:strauss} that $X$ is weakly c\`adl\`ag in $H$.
  \smallskip\par\noindent
  \textsc{Step 2.} We are going to prove that
  $j(\cdot,X)+j^*(\cdot,\xi) \in L^1(D_T)$ and
  $\xi \in \beta(\cdot,X)$ a.e. in $D_T$. Since
  $\beta_{\lambda'}(X_{\lambda'})$ converges weakly to $\xi$ in
  $L^1(D_T)$, the weak lower semicontinuity of convex integrals (see,
  e.g., \cite[Theorem~2.3, p.~18]{Giaq:Mult}) immediately yields
  \[
    \int_{D_T} j^*(\xi)\,dx\,dt \leq \liminf_{\lambda' \to 0}
    \int_{D_T} j^*(\beta_{\lambda'}(X_{\lambda'}))\,dx\,dt
  \]
  where the right-hand side is finite by Lemma~\ref{lm:est2} (here and
  below we do not explicitly denote the dependence of $j$ and related
  maps on $\omega$ and $t$). Writing
  \[
    \lambda \beta_\lambda(X_\lambda) = X_\lambda -
    (I+\lambda\beta)^{-1}X_\lambda,
  \]
  the weak convergence of $\beta_\lambda(X_\lambda)$ is $L^1(D_T)$
  implies its boundedness, hence the left-hand side of the previous
  identity converges to zero in $L^1(D_T)$ along $\lambda'$. Moreover,
  as $X_{\lambda'}$ converges to $X$ in $L^2(0,T;H)$, it follows that
  $(I+\lambda'\beta)^{-1}X_{\lambda'}$ converges to $X$ in
  $L^1(D_T)$. Therefore, again by lower semicontinuity of convex
  integrals,
  \begin{equation}
    \label{eq:lsc}
    \int_{D_T} j(X)\,dx\,dt \leq \liminf_{\lambda' \to 0} 
    \int_{D_T} j\bigl( (I+\lambda'\beta)^{-1}(X_{\lambda'}) \bigr)\,dx\,dt,
  \end{equation}
  where the right-hand side is finite because the integrand is bounded
  by $\beta_{\lambda'}(X_{\lambda'})X_{\lambda'}$ (see part (iii) of
  the proof of Lemma~\ref{lm:est2}).

  Let $j_\lambda$ be the Moreau-Yosida regularization of $j$, i.e.
  \begin{align*}
    j_\lambda: \Omega \times [0,T] \times \erre 
    &\longto [0,+\infty\mathclose[\\
    (\omega,t,r) 
    &\longmapsto \inf_{s\in\erre} \Bigl( \frac{1}{2\lambda} \abs{r-s}^2
    + j(\omega,t,s) \Bigr).
  \end{align*}
  Recall that, for every $(\omega,t) \in \Omega \times [0,T]$,
  $j_\lambda(\omega,t,\cdot)$ is a convex differentiable function,
  with derivative equal to $\beta_\lambda(\omega,t,\cdot)$, that
  converges pointwise to $j(\omega,t,\cdot)$ from below. By definition
  of subdifferential one has, for any measurable set $E \subset D_T$,
  \[
    \int_E\beta_\lambda(\cdot,X_\lambda)(X_\lambda-z)\,dx\,dt\geq
    \int_E j_\lambda(\cdot,X_\lambda)\,dx\,dt- \int_E
    j_\lambda(\cdot,z)\,dx\,dt \qquad \forall z\in L^{\infty}(E).
  \]
  Since $X_{\lambda'} \to X_{\lambda'}$ in $L^2(0,T;H)$, there exists
  a subsequence of $\lambda'$, denoted by same symbol for simplicity,
  such that $X_{\lambda'} \to X$ a.e. in $D_T$. Therefore, thanks to
  the Severini-Egorov theorem, for every $\eta>0$ there exists
  $E_\eta \subseteq D_T$, with $\abs{D_T \setminus E_\eta} \leq \eta$,
  such that $X_{\lambda'} \to X$ uniformly on $E_\eta$. Choosing
  $E=E_\eta$ and passing to the limit along $\lambda'$ in the last
  inequality yields
  \[
    \int_{E_\eta} (X - z) \xi \,dx\,dt
    \geq \liminf_{\lambda' \to 0} \int_{E_\eta} 
    j_{\lambda'}(X_{\lambda'})\,dx\,dt - \int_{E_\eta} j(z) \,dx\,dt
    \qquad \forall z \in L^{\infty}(E_{\eta})
  \]
  because $\beta_{\lambda'}(X_{\lambda'})$ converges weakly to $\xi$
  in $L^1(D_T)$ and $X_{\lambda'}$ converges to $X$ uniformly on
  $D_T$, and $j_\lambda \leq j$. Moreover, by a well-known
  identity satisfied by the Moreau-Yosida regularization, one has
  \[
    j_\lambda(X_\lambda) = j\bigl( (I+\lambda\beta)^{-1}X_\lambda \bigr) 
    + \frac12 \lambda \bigl( X_\lambda - (I+\lambda\beta)^{-1}X_\lambda
    \bigr)^2.
  \]
  Since $(X_\lambda)$ is bounded in $L^2(D_T)$ and
  $(I+\lambda\beta)^{-1}$ is a contraction on $\erre$, it is easily
  seen that
  \[
    \lambda' \int_{D_T} \bigl( X_{\lambda'} -
    (I+\lambda'\beta)^{-1}X_{\lambda'} \bigr)^2\,dx\,dt \longto 0
  \]
  as $\lambda' \to 0$. By \eqref{eq:lsc} it then follows
  \[
    \int_{E_\eta} (X - z) \xi \,dx\,dt
    \geq \int_{E_\eta} \bigl( j(X) - j(z) \bigr) \,dx\,dt
    \qquad \forall z \in L^{\infty}(E_{\eta}).
  \]
  By a suitable choice of $z$, this implies
  \[
    (X-z)\xi \geq j(X) - j(z) \qquad \text{a.e. in } E_\eta
    \quad \forall z \in \erre
  \]
  (cf.~\cite{cm:div} for a detailed argument in a slightly
  simpler setting), and hence that $\xi \in \partial j(X) = \beta(X)$
  a.e. in $E_{\eta}$.  Since $\eta$ is arbitrary, it follows that
  $\xi \in \beta(X)$ a.e. in $D_T$.
  \smallskip\par\noindent
  \textsc{Step 3.} We are now going to show that the solution pair
  $(X,\xi)$ constructed in step 1 is unique. In particular, we claim
  that if there exist
  \[
    X_i\in L^{\infty}(0,T;H) \cap L^2(0,T;V), \quad \xi_i \in L^1(D_T),
    \qquad i=1,2,
  \]
  with $\xi_i \in \beta(\cdot, X_i)$ a.e. in $D_T$ and
  $j(\cdot,X_i) + j^*(\cdot,\xi_i) \in L^1(D_T)$ such that
  \[
    X_i + \int_0^\cdot AX_i(s)\,ds + \int_0^\cdot \xi_i(s)\,ds
    = X_0 + G \cdot Z,
  \]
  then $(X_1,\xi_1)=(X_2,\xi_2)$. In fact, setting $X := X_1 - X_2$ and
  $\xi := \xi_1 - \xi_2$, one has
  \[
    X + \int_0^\cdot AX(s)\,ds + \int_0^\cdot \xi(s)\,ds = 0,
  \]
  where $X\xi$ belongs to $L^1(D_T)$: in fact, $X\xi \geq 0$ by
  monotonicity of $\beta$ and, thanks to the convexity of $j$ and
  $j^*$ and to the hypothesis on their behavior at infinity, one has
  \begin{align*}
    \frac14 X\xi
    &\leq j(X/2) + j^*(\xi/2)
      = j\bigl( X_1/2 - X_2/2 \bigr) + j^*\bigl( \xi_1/2 - \xi_2/2 \bigr)\\
    &\lesssim 1+j(X_1)+j(X_2)+j^*(\xi_1)+j^*(\xi_2) \in L^1(D_T).
  \end{align*}
  By an argument completely analogous to the one used in the proof of
  Proposition~\ref{prop:Ito} (in fact easier), one obtains
  \[
    \norm{X}^2 + \int_0^\cdot\!\!\int_D X(s)\xi(s)\,dx\,ds \leq 0.
  \]
  Since the integrand in the previous identity is positive, it follows
  that $X=0$, which in turn implies that $\int_0^t \xi(s)\,ds = 0$ for
  all $t\in [0,T]$, hence also that $\xi = 0$, thus proving the claim.
  \smallskip\par\noindent
  \textsc{Step 4.} The uniqueness result proved in the previous step
  allows us to show that the collection of pairs $(X,\xi)$ indexed by
  $\omega \in \Omega'$ constructed in step 1 is in fact an optional
  process with values in $H \times L^1(D)$. This is far from obvious,
  mainly because $X$ and $\xi$ have been constructed, for each
  $\omega \in \Omega'$, as limits along subsequences $\lambda'$ that
  depend themselves on $\omega$. The crucial observation, which is an
  immediate consequence of the previous steps, is the following: from
  any subsequence of $\lambda$ one can extract a further subsequence
  $\lambda'$ (depending on $\omega$) such that the convergences of
  Lemma~\ref{lm:cvg} hold; but since the limits are unique, a
  classical result of elementary analysis ensures that the
  convergences hold along the original sequence $\lambda$, which is
  independent of $\omega$. As $X_\lambda$ converges to $X$ in
  $L^2(0,T;H)$ $\P$-almost surely and $(X_\lambda)$ is bounded in
  $L^2(\Omega;L^2(0,T;H))$, one has, passing to a subsequence if
  necessary, that $X_\lambda$ converges to $X$ weakly in
  $L^2(\Omega \times [0,T];H)$. Since $(X_\lambda)$ is also bounded in
  $L^2(\Omega \times [0,T];V)$, it follows that $X_\lambda$, again
  passing to a subsequence if necessary, converges weakly to $X$ in
  the latter space as well. Therefore there exists a sequence in the
  convex envelope of $(X_\lambda)$ that converges strongly to $X$ in
  $L^2(\Omega \times [0,T];V)$: since $X_\lambda$ is adapted and
  c\`adl\`ag with values in $H$, hence optional, for every
  $\lambda>0$, $X$ is an $H$-valued optional process. Completely
  analogously, $X$ is a (measurable) adapted $V$-valued process.
  In order to establish measurability properties of $\xi$, we need a
  more involved argument. Setting
  $\xi_\lambda := \beta_\lambda(\cdot, X_\lambda)$ for convenience,
  let $\phi \in L^\infty(D_T)$ and define
  \[
    \Xi_\lambda := \int_{D_T} \xi_\lambda \phi\,dx\,dt,
    \qquad \Xi := \int_{D_T} \xi \phi\,dx\,dt,
  \]
  so that $\Xi_\lambda$ converges to $\Xi$ $\P$-a.s.
  Jensen's inequality and part (iii) in the proof of
  Lemma~\ref{lm:est1} imply
  \[
    j^*(\cdot,\Xi_\lambda) \lesssim_{|D_T|,\phi}
    \int_{D_T} j^*(\cdot,\xi_\lambda) \,dx\,dt
    \leq \int_{D_T} \xi_\lambda X_\lambda \,dx\,dt,
  \]
  where the right-hand side, as a family indexed by $\lambda$, is
  bounded in $L^1(\Omega)$ by
  Lemma~\ref{lm:est1}. Lemma~\ref{valle_p_time} then implies that
  $(\Xi_\lambda)$ is uniformly integrable in $\Omega$ and hence, by
  Vitali's theorem, that $\Xi_\lambda$ converges to $\Xi$ in
  $L^1(\Omega)$. Again the estimate
  $j^*(\cdot, \xi_\lambda) \leq \xi_\lambda X_\lambda$ implies,
  recalling that the right-hand side, as a family indexed by
  $\lambda$, is bounded in $L^1(\Omega \times D_T)$, that
  $(\xi_\lambda)$ is uniformly integrable in $\Omega \times D_T$,
  hence relatively weakly compact as well, so that, by the
  Dunford-Pettis theorem, there exists
  $\tilde{\xi} \in L^1(\Omega \times D_T)$ such that $\xi_\lambda$
  converges weakly to $\tilde{\xi}$ in
  $L^1(\Omega \times [0,T];L^1(D))$, from which it follows, by a
  reasoning already used, that $\tilde{\xi}$ is an optional
  $L^1(D)$-valued process.
  For every $\lambda$ and $F \in \cF$ one has, setting
  $h:=\mathbbm{1}_F \in L^\infty(\Omega)$,
  \[
    \E h \Xi_\lambda =
    \int_{\Omega \times D_T} \xi_\lambda \phi h \,dx\,dt\,d\P,
  \]
  hence, passing to the limit as $\lambda \to 0$,
  \[
    \E h \Xi  = \int_{D_T} \bigl(\E h \xi\bigr) \phi \,dx\,dt
    = \int_{D_T} \bigl( \E h \tilde{\xi} \bigr) \phi \,dx\,dt.
  \]
  Therefore $\E\mathbbm{1}_F\xi = \E\mathbbm{1}_F\tilde{\xi}$ in
  $L^1(D_T)$ for every $F \in \cF$, i.e. $\xi=\tilde{\xi}$ in
  $L^1(D_T)$ $\P$-a.s.
  \smallskip\par\noindent
  \textsc{Step 5.} With the measurability properties of the processes
  $X$ and $\xi$ available, we can establish estimates of their
  moments. In fact, by the weak convergences of Lemma~\ref{lm:cvg} and
  the estimates of Lemma~\ref{lm:est1}, thanks to the weak and weak*
  lower semicontinuity of the norms, and to Fatou's lemma, it follows,
  writing $\xi_\lambda:=\beta_\lambda(\cdot,X_\lambda)$, that
  \begin{alignat*}{3}
    \E\norm[\big]{X}^2_{L^\infty(0,T;H)}
    &\leq \E\liminf_{\lambda \to 0} \norm[\big]{X_\lambda}^2_{L^\infty(0,T;H)}
      &&\leq \liminf_{\lambda \to 0} \E\norm[\big]{X_\lambda}^2_{L^\infty(0,T;H)},\\
    \E\norm[\big]{X}^2_{L^2(0,T;V)}
    &\leq \E\liminf_{\lambda \to 0} \norm[\big]{J_\lambda X_\lambda}^2_{L^2(0,T;V)}
      &&\leq \liminf_{\lambda \to 0}
      \E\norm[\big]{J_\lambda X_\lambda}^2_{L^2(0,T;V)},\\
    \E\norm[\big]{\xi}_{L^1(D_T)}
    &\leq \E\liminf_{\lambda \to 0} \norm[\big]{\xi_\lambda}_{L^1(D_T)}
    &&\leq \liminf_{\lambda\to 0} \E\norm[\big]{\xi_\lambda}_{L^1(D_T)}, 
  \end{alignat*}
  where the right-hand sides are all finite. Similarly, 
  the lower semicontinuity inequality
  \[
  \int_{D_T} \bigl( j(\cdot,X) + j^*(\cdot,\xi) \bigr)\,dx\,dt
    \leq \liminf_{\lambda \to 0}\int_{D_T} \bigl(
      j(\cdot,X_\lambda) + j^*(\cdot,\xi_\lambda) \bigr)\,dx\,dt
  \]
  yields, taking expectations on both sides and invoking Fatou's
  lemma,
  \begin{align*}
    \E\int_{D_T} \bigl( j(\cdot,X) + j^*(\cdot,\xi) \bigr)\,dx\,dt
    &\leq \E\liminf_{\lambda \to 0}\int_{D_T} \bigl(
      j(\cdot,X_\lambda) + j^*(\cdot,\xi_\lambda) \bigr)\,dx\,dt\\
    &\leq \liminf_{\lambda \to 0}
      \E\int_{D_T} \bigl( j(\cdot,X_\lambda) + j^*(\cdot,\xi_\lambda)
      \bigr)\,dx\,dt\\
    &\leq \liminf_{\lambda \to 0} \norm[\big]{%
      \xi_\lambda X_\lambda}_{L^1(\Omega \times D_T)},
  \end{align*}
  where the last term on the right-hand side is finite by
  Lemma~\ref{lm:est1}.
  \smallskip\par\noindent
  \textsc{Step 6.}  To conclude, let us show that the trajectories of
  $X$ are c\`adl\`ag in $H$. Proposition~\ref{prop:Ito} yields
  \begin{equation}
    \label{eq:cadlag}
    \begin{split}
      &\norm{X}^2 +2\int_0^\cdot \ip{AX(s)}{X(s)}\,ds
      +2\int_0^\cdot\!\!\int_D\xi(s)X(s)\,dx\,ds\\
      &\hspace{3em} = \norm{X_0}^2 + [G\cdot Z, G \cdot Z] + 2 (X_- G)
      \cdot Z,
    \end{split}
  \end{equation}
  where, by Fubini's theorem,
  \[
    \int_D \xi X\,dx \leq \int_D j(\cdot,X)\,dx
    + \int_D j^*(\cdot,\xi)\,dx \in L^1(0,T),
  \]
  thus also, taking into account that $X \in L^2(0,T; V)$
  and $AX \in L^2(0,T;V')$,
  \[
    \int_0^\cdot \ip{A X(s)}{X(s)}\,ds +
    \int_0^\cdot\!\!\int_D \xi(s)X(s)\,dx\,ds \in C([0,T]).
  \]
  Furthermore, the last term on the right-hand side of
  \eqref{eq:cadlag} is c\`adl\`ag, being a stochastic integral with
  respect to a semimartingale. Recalling the definition of quadratic
  variation, the same reasoning applies to the second term on the
  right-hand side of \eqref{eq:cadlag}. We deduce by inspection of
  \eqref{eq:cadlag} that the real-valued process $\norm{X}^2$ is
  c\`adl\`ag. Since $X$ is also weakly c\`adl\`ag in $H$ (see step 1)
  and $H$ is reflexive, we infer that the trajectories of $X$ are also
  strongly c\`adl\`ag in $H$. In fact, let $t \in [0,T\mathclose[$ and
  $(t_n)$ a sequence converging to $t$ from the right. Then
  $X(t_n) \to X(t)$ weakly in $H$ and $\norm{X(t_n)} \to \norm{X(t)}$
  imply that $X(t_n) \to X(t)$ in $H$. Similarly, if
  $t \in \mathopen]0,T]$ and $(t_n)$ is a sequence converging to $t$
  from the left, $X(t_n) \to X(t-)$ weakly in $H$ and
  $\norm{X(t_n)} \to \norm{X(t-)}$ yield $X(t_n) \to X(t-)$
  in $H$.
\end{proof}

In order to prove well-posedness of \eqref{eq:add} without the extra
regularity assumption \eqref{eq:G0} on the coefficient $G$, we prove
continuity, in a suitable sense, of the map $(X_0,G) \mapsto X$.
\begin{prop}
  \label{prop:cont}
  Let $(X_i,\xi_i)$, $i=1,2$, be strong solutions to \eqref{eq:add}
  with initial conditions $X_{0i} \in L^2(\Omega;H)$ and coefficients
  $G_i \in \cS_{C}(Z)$, respectively, where $C\in \cC(Z)$ and
  $\E\lambda_{T-}^{C}(G_i) < \infty$.  Then
  \begin{align*}
    &\E \bigl( X_1 - X_2 \bigr)_{T-}^{*2}
      + \E\int_0^T \norm[\big]{X_1(t)-X_2(t)}^2_{V}\,dt\\
  &\hspace{3em} \lesssim
    \E\norm[\big]{X_{01} - X_{02}}^2
    + \E \lambda^C_{T-}(G_1-G_2),
  \end{align*}
  where the implicit constant depends only on the coercivity constant
  of $A$.
\end{prop}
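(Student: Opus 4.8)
The plan is to reduce to the difference of the two solutions and apply the It\^o-type formula of Proposition~\ref{prop:Ito}, using the monotonicity of $\beta$ to annihilate the two singular drift terms. Writing $X := X_1-X_2$, $\xi := \xi_1-\xi_2$, $G := G_1-G_2$, and $X_0 := X_{01}-X_{02}$, subtracting the two identities of Definition~\ref{def:sol}(c) (in the additive-noise form with $G_i\cdot Z$) and using the linearity of $A$ gives, as an identity in $V'\cap L^1(D)$,
\[
  X + \int_0^\cdot AX(s)\,ds + \int_0^\cdot \xi(s)\,ds = X_0 + G\cdot Z.
\]
Since $\cS_C(Z)$ is a vector space and $\norm{G}_{L^2(m^Z_T)} \leq \norm{G_1}_{L^2(m^Z_T)} + \norm{G_2}_{L^2(m^Z_T)}$, one has $G\in\cS_C(Z)$ with $\E\lambda^C_{T-}(G)<\infty$; moreover $X\in L^0(\Omega;L^\infty(0,T;H))\cap L^0(\Omega;L^2(0,T;V))$, $AX\in L^0(\Omega;L^2(0,T;V'))$, and $\xi\in L^0(\Omega;L^1(0,T;L^1(D)))$, so the structural hypotheses of Proposition~\ref{prop:Ito} are met.

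Before invoking that proposition I must verify its integrability assumption, namely that $j(\cdot,aX)+j^*(\cdot,a\xi)\in L^0(\Omega;L^1(D_T))$ for some $a>0$. This is the one genuinely delicate point, and it is precisely where the symmetry-like condition (J)(c) is needed: because $X$ and $\xi$ are \emph{differences}, their arguments change sign, so convexity alone does not suffice. Choosing $a=1/2$ and arguing exactly as in Step~3 of the proof of Proposition~\ref{prop:aux}, convexity of $j$ and $j^*$ together with the comparability of the growth of $j$ at $\pm\infty$ yield
\[
  j(\cdot,X/2)+j^*(\cdot,\xi/2) \lesssim 1 + j(\cdot,X_1)+j(\cdot,X_2)+j^*(\cdot,\xi_1)+j^*(\cdot,\xi_2),
\]
whose right-hand side belongs to $L^1(D_T)$ almost surely, as $\xi_iX_i=j(\cdot,X_i)+j^*(\cdot,\xi_i)\in L^1(D_T)$ for $i=1,2$.

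With the hypotheses in place, Proposition~\ref{prop:Ito} gives
\[
  \tfrac12\norm{X}^2 + \int_0^\cdot\ip{AX(s)}{X(s)}\,ds + \int_0^\cdot\!\!\int_D \xi(s)X(s)\,dx\,ds = \tfrac12\norm{X_0}^2 + \tfrac12\bigl[G\cdot Z,G\cdot Z\bigr] + (X_-G)\cdot Z.
\]
The decisive simplification is that the two singular terms merge into a nonnegative quantity: since $\xi_i\in\beta(\cdot,X_i)$ and $\beta$ is monotone, $\xi X=(\xi_1-\xi_2)(X_1-X_2)\geq0$ a.e., so the corresponding integral may be dropped, while coercivity gives $\ip{AX}{X}\geq c\norm{X}_V^2$.

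It then remains to bound the right-hand side after taking the supremum over $[0,T[$ and expectations. Estimating the quadratic-variation term and the stochastic integral $(X_-G)\cdot Z$ verbatim as in the proof of Lemma~\ref{lm:est1}, one gets $\E[G\cdot Z,G\cdot Z]_{T-}\lesssim\E\lambda^C_{T-}(G)$ and, by the maximal inequality of {\S}\ref{ssec:int} followed by Young's inequality with a small parameter $\eps>0$, $\E\bigl((X_-G)\cdot Z\bigr)^*_{T-}\leq\eps\,\E X_{T-}^{*2}+c_\eps\,\E\lambda^C_{T-}(G)$. Choosing $\eps$ small enough allows one to absorb the term $\eps\,\E X_{T-}^{*2}$ into the left-hand side, producing the asserted inequality with implicit constant depending only on the coercivity constant $c$. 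The only obstacle of substance is the verification of the integrability hypothesis of Proposition~\ref{prop:Ito} for the difference; once that is secured, the estimate is a direct transcription of the computations already performed for the regularized equation.
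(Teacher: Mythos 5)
Your proposal is correct and follows essentially the same route as the paper: form the difference $(X,\xi,G,X_0)$, verify the integrability hypothesis of Proposition~\ref{prop:Ito} via $\frac14 X\xi \leq j(X/2)+j^*(\xi/2)\lesssim 1+j(X_1)+j(X_2)+j^*(\xi_1)+j^*(\xi_2)$ using convexity and assumption (J)(c), then apply the It\^o formula, drop the nonnegative term $\xi X$ by monotonicity of $\beta$, use coercivity of $A$, and estimate the quadratic variation and the stochastic integral exactly as in Lemma~\ref{lm:est1}. You correctly identify the verification of the integrability hypothesis as the one delicate point, which is precisely where the paper also puts the emphasis.
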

\begin{proof}
  Setting
  \begin{alignat*}{3}
    X &:= X_1 - X_2, &\quad  \xi &:= \xi_1-\xi_2,\\
    X_0 &:= X_{01} - X_{02}, &\quad  G &:= G_1-G_2,
  \end{alignat*}
  one has
  \[
    X + \int_0^\cdot AX(s)\,ds + \int_0^\cdot \xi(s)\,ds
    = X_0 + G \cdot Z.
  \]
  In analogy to a reasoning already used, the hypotheses on $j$ imply that
  \[
    \frac14 X \xi \leq 
    j(X/2) + j^*(\xi/2) \lesssim 1 + j(X_1) + j(X_2) + j^*(\xi_1) + j^*(\xi_2)
    \in L^1(D_T),
  \]
  so that Proposition~\ref{prop:Ito} yields
  \begin{align*}
    &\frac12 \norm{X}^2 + \int_0^\cdot \ip{AX(s)}{X(s)}\,ds
      + \int_0^\cdot\!\!\int_D \xi(s)X(s)\,dx\,ds\\
    &\hspace{3em} = \frac12 \norm{X_0}^2
      + \frac12 [G \cdot Z,G \cdot Z] + (X_- G) \cdot Z.
  \end{align*}
  Proceeding exactly as in the proof of Lemma~\ref{lm:est1}, one has
  \[
    \E [G\cdot Z,G \cdot Z]_{T-} \leq 5 \E \lambda_{T-}^C(G)
  \]
  and
  \[
    \E \bigl( (X_- G) \cdot Z \bigr)^*_{T-} \leq
    \frac14 \E X_{T-}^{*2} + 4 \E \lambda_{T-}^C(G),
  \]
  which immediately yield the claim by monotonicity and coercivity of
  $A$, and monotonicity of $\beta$.
\end{proof}

We are now in the position to prove Theorem~\ref{thm:add}.
\begin{proof}
  Let us set, for every $n \in \enne$, $G^n:=T_nG$. Then $G^n$ takes
  values in $\cL(K,V_0)$ and
  \[
    \norm[\big]{G^n}_{\cL(K,V_0)} \leq \norm[\big]{T_n}_{\cL(H,V_0)}
    \norm[\big]{G}_{\cL(K,H)},
  \]
  so that $G^n$ satisfies \eqref{eq:G0} for every $n \in \enne$.
  Moreover, by the uniform boundedness of $(T_n)$ in $\cL(H)$, one has
  \[
    \norm[\big]{G^n}_{\cL(K,H)} \leq \sup_{n \in \enne}
    \norm[\big]{T_n}_{\cL(H)} \norm[\big]{G}_{\cL(K,H)},
  \]
  so that, setting
  \[
    \bar{C} := \sup_{n \in \enne} \norm[\big]{T_n}_{\cL(H)} C
    \in \cC(Z),
  \]
  it follows that $G^n \in \cS_{\bar{C}}(Z)$ for every $n \in
  \enne$. Proposition~\ref{prop:aux} then ensures the existence and
  uniqueness of a strong solution $(X^n,\xi^n)$ to \eqref{eq:0} with
  data $(X_0,G^n)$ for every $n \in \enne$, i.e. such that
  \begin{equation}
    \label{eq:n}
    X^n + \int_0^\cdot AX^n(s)\,ds + \int_0^\cdot \xi_n(s)\,ds
    = X_0 + G^n \cdot Z.
  \end{equation}
  Furthermore, by inspection of the proof of Lemma~\ref{lm:est1} it
  follows that
  \[
  \E \norm[\big]{X^n}^2_{L^\infty(0,T;H)}
    + \E \norm[\big]{X^n}^2_{L^2(0,T; V)}
    + \E \norm[\big]{\xi^n X^n}_{L^1(D_T)}
  \lesssim \E\norm{X_0}^2 + \E\lambda_{T-}^{\bar{C}}(G^n),
  \]
  where the implicit constant is independent of $n$.
  In particular, since 
  \begin{align*}
  \lambda_{T-}^{\bar{C}}(G^n)
  &= \bar{C}(T-) \int_0^{T-} \norm[\big]{G^n(s)}^2_{\cL(K,H)}\,d\bar{C}(s)\\
  &\leq \sup_{n\in\enne} \norm[\big]{T_n}_{\cL(H)}^2
  \lambda_{T-}^{\bar{C}}(G) \in L^1(\Omega),
  \end{align*}
  there exists a constant $N$, independent of $n$, such that
  \[
    \E \norm[\big]{X^n}^2_{L^\infty(0,T;H)}
    + \E \norm[\big]{X^n}^2_{L^2(0,T; V)}
    + \E \norm[\big]{\xi^n X^n}_{L^1(D_T)} < N.
  \]
  Moreover, since $\bar{C}$ does not depend on $n$,
  Proposition~\ref{prop:cont} implies that
  \[
    \E \norm[\big]{X^{n_1} - X^{n_2}}^2_{L^\infty(0,T; H) \cap
      L^2(0,T;V)} \lesssim
    \E \lambda_{T-}^{\bar{C}} \bigl( G^{n_1}-G^{n_2} \bigr)
    \qquad \forall n_1,\,n_2 \in \enne.
  \]
  By the properties of $(T_n)_n$ and the dominated convergence
  theorem, the right-hand side converges to zero as $n_1,n_2 \to
  \infty$, hence the sequence $(X^n)$ is Cauchy in the space
  $L^2(\Omega;L^\infty(0,T;H)) \cap L^2(\Omega;L^2(0,T;V))$. As
  $X^n\xi^n = j(\cdot, X^n) + j^*(\cdot, \xi^n)$ and $j$ is positive,
  $\bigl(j^*(\cdot, \xi^n)\bigr)$ is bounded in
  $L^1(\Omega\times(0,T)\times D)$, hence, taking
  Lemma~\ref{valle_p_time} into account and arguing as in the proof of
  Lemma~\ref{lm:cvg}, it is easily seen that the sequence $(\xi^n)$ is
  relatively compact in $L^1(\Omega \times [0,T] \times D)$.
  Therefore, passing to a subsequence if necessary,
  \begin{alignat*}{3}
    X^n &\longto X &&\quad \text{ in } L^2(\Omega;L^\infty(0,T;H))
    \cap L^2(\Omega;L^2(0,T;V)),\\
    \xi^n &\longto \xi &&\quad \text{ weakly in } L^1(\Omega \times D_T).
  \end{alignat*}
  The first convergence implies that 
  \[
  \int_0^\cdot AX^n(s)\,ds \longto \int_0^\cdot AX(s)\,ds \qquad
  \text{ in } L^2(\Omega;C([0,T];V')),
  \]
  and that $\bigl( X^n-X \bigr)^*_{T-} \to 0$ in
  $L^2(\Omega)$, because $X^n$ has c\`adl\`ag trajectories for each
  $n \in \enne$ thanks to Proposition~\ref{prop:aux}. In particular, $X$
  has c\`adl\`ag trajectories as well.
  The uniform boundedness of $(T_n)$ in $\cL(H)$ and the dominated
  convergence theorem for stochastic integrals yield
  \[
    \bigl( G^n \cdot Z - G \cdot Z \bigr)_{T-}^* \longto 0
    \qquad \text{ in } L^2(\Omega).
  \]
  From $\Delta X^n(T) = \Delta(G^n \cdot Z)_T = G^n_T \Delta Z_T$ and
  the above uniform convergences up to $T-$ it immediately follows
  that
  \[
  \bigl( X^n-X \bigr)^*_{T} \longto 0, \qquad
  \bigl( G^n \cdot Z - G \cdot Z \bigr)^*_{T} \longto 0
  \]
  in $L^0(\Omega)$ as $n \to \infty$.
  Let $\phi \in V_0$ and $F \in \cF$. Recalling that
  $\xi^n \to \xi$ weakly in $L^1(\Omega \times D_T)$, one has
  \[
    \E\mathbbm{1}_F \ip[\Big]{\phi}{\int_0^t \xi^n(s)\,ds}
    \longto \E\mathbbm{1}_F \ip[\Big]{\phi}{\int_0^t \xi(s)\,ds}
    \qquad \forall t \in [0,T].
  \]
  Taking the duality product of both sides of \eqref{eq:n} with
  $\phi \in V_0$ and multiplying by $\mathbbm{1}_F$, one readily
  infers, passing to the limit as $n \to \infty$ and taking into
  account that $\varphi$ and $F$ are arbitrary, that
  \[
    X(t) + \int_0^t AX(s)\,ds
    + \int_0^t \xi(s)\,ds = X_0 + \bigl( G \cdot Z \bigr)_t
    \qquad \forall t \in [0,T]
  \]
  as an identity in $V_0'$. Since both sides of the
  equality are immediately seen to be c\`adl\`ag (with values in
  $V_0'$), it follows that equality holds in $V_0'$ also in the sense
  of indistinguishability, not only in the sense of modifications. By
  comparison, the identity also holds in $V' \cap L^1(D)$.  Moreover,
  arguing as in step 2 of the proof of Proposition~\ref{prop:aux}, we
  deduce that $\xi\in\beta(X)$ a.e.~in $\Omega\times(0,T)\times D$.
  The uniqueness of $(X,\xi)$ follows by an argument completely
  analogous to the one used in step 3 of the proof of
  Proposition~\ref{prop:aux}, appealing to the integration-by-parts
  formula of Proposition~\ref{prop:Ito}.
\end{proof}

Suitably localized versions of the previous results hold.
\begin{prop}
  Let $\tau \neq 0$ be a stopping time with $\tau \leq T$, $C$ a
  control process for $Z$, and $G$ a strongly predictable process such
  that $\E\lambda_{\tau-}^C(G) < \infty$.  If
  $X_0 \in L^2(\Omega; H)$, then \eqref{eq:add} admits a unique strong
  solution on $\cc{0}{\tau}$.
\end{prop}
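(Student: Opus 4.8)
The plan is to reduce the statement to the global result of Theorem~\ref{thm:add} by replacing the driving semimartingale $Z$ with its stopped version $Z^{\tau-}$. By Lemma~\ref{lm:stopped}, $C^{\tau-}$ is a control process for $Z^{\tau-}$, and since $C^{\tau-}$ is constant from $\tau-$ on, a direct computation gives
\[
  \lambda^{C^{\tau-}}_{T-}(G) = C^{\tau-}_{T-}\bigl(\norm{G}^2\cdot C^{\tau-}\bigr)_{T-}
  = C_{\tau-}\bigl(\norm{G}^2\cdot C\bigr)_{\tau-} = \lambda^C_{\tau-}(G),
\]
so that $\E\lambda^{C^{\tau-}}_{T-}(G) = \E\lambda^C_{\tau-}(G) < \infty$ and $G\in\cS_{C^{\tau-}}(Z^{\tau-})$. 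Thus the hypotheses of Theorem~\ref{thm:add} are satisfied with $Z$, $C$ replaced by $Z^{\tau-}$, $C^{\tau-}$, and we obtain a global strong solution $(\bar X,\bar\xi)$ of
\[
  \bar X + \int_0^\cdot A\bar X(s)\,ds + \int_0^\cdot\bar\xi(s)\,ds = X_0 + G\cdot Z^{\tau-},
\]
with $\bar\xi\in\beta(\cdot,\bar X)$ a.e.\ in $D_T$ and the integrability properties listed there. Working against $Z^{\tau-}$ is essential, since $G$ itself need not be integrable with respect to $Z$ on all of $[0,T]$ — only up to $\tau-$ — so that $G\cdot Z$ may fail to be globally defined while $G\cdot Z^{\tau-}$ is.

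Next I would recover the solution on the closed interval $\cc{0}{\tau}$ by restoring the jump at $\tau$. Since $G\cdot Z^{\tau-}$ has no jump at $\tau$ (the integrator being constant from $\tau-$ on) and the two Lebesgue integrals are continuous, we have $\bar X_\tau = \bar X_{\tau-}$. Setting $X := \bar X$ on $\co{0}{\tau}$, $X_t := \bar X_{\tau-} + G_\tau\Delta Z_\tau$ for $t\in[\tau,T]$, and $\xi := \bar\xi\,\mathbbm{1}_{\co{0}{\tau}}$, the pair $(X,\xi)$ is adapted and c\`adl\`ag with the required integrability (the point $\{\tau\}$ is Lebesgue-negligible in time, so the closed/half-open distinction is immaterial for the $L^1$ conditions), and $\xi\in\beta(\cdot,X)$ a.e.\ in $\co{0}{\tau}\times D$. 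It then remains to verify Definition~\ref{def:sol}(c): for $t<\tau$ the identity for $(X,\xi)$ coincides with the one for $(\bar X,\bar\xi)$, while for $t\geq\tau$ one uses $\bar X_\tau = \bar X_{\tau-}$ together with $(G\cdot Z^{\tau-})_\tau = (G\cdot Z^{\tau-})_{\tau-}$ and the fact that the jump of $(\mathbbm{1}_{\cc{0}{\tau}}G)\cdot Z$ at $\tau$ is precisely $G_\tau\Delta Z_\tau$, so that the added jump exactly accounts for the difference between the open and the closed stochastic integral. This is the same terminal-jump computation carried out at time $T$ in the proof of Theorem~\ref{thm:add}.

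The uniqueness of $(X,\xi)$ on $\cc{0}{\tau}$ would follow from the It\^o formula of Proposition~\ref{prop:Ito}, applied to the difference of two solutions exactly as in Step~3 of the proof of Proposition~\ref{prop:aux}: since both solutions share the same initial datum and the same driving term $(\mathbbm{1}_{\cc{0}{\tau}}G)\cdot Z$, their difference $(X,\xi)$ solves the homogeneous (noise-free, stopped) equation, and the bound $\tfrac14 X\xi\leq j(\cdot,X/2)+j^*(\cdot,\xi/2)\in L^1(D_T)$ guarantees the formula applies, yielding $\norm{X}^2 + 2\int_0^{\cdot\wedge\tau}\!\int_D X\xi\,dx\,ds\leq 0$, hence $X=0$ on $\cc{0}{\tau}$ and then $\xi=0$ a.e.\ on $\co{0}{\tau}\times D$. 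The main obstacle I expect is not the reduction itself but the careful bookkeeping at the terminal time $\tau$: the solution delivered by Theorem~\ref{thm:add} effectively lives on $\co{0}{\tau}$, ignoring the jump of $Z$ at $\tau$, and one must add back $G_\tau\Delta Z_\tau$ and check that the resulting process satisfies the closed-interval identity of Definition~\ref{def:sol}(c) in the sense of indistinguishability, not merely up to modification.
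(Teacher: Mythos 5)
Your proposal is correct and follows essentially the same route as the paper: stop the semimartingale at $\tau-$, invoke Lemma~\ref{lm:stopped} and the identity $\lambda^{C^{\tau-}}_{T-}(G)=\lambda^C_{\tau-}(G)$ to apply Theorem~\ref{thm:add}, then restore the jump $G_\tau\Delta Z_\tau$ at $\tau$ to pass from the open to the closed stochastic interval. The only (immaterial) deviation is in the uniqueness step, where the paper simply observes that any closed-interval solution restricted to $\co{0}{\tau}$ must coincide with the unique global solution of the stopped equation, whereas you rerun the It\^o-formula/monotonicity argument directly on the difference --- the same mechanism, applied one level down.
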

\begin{proof}
  Let us consider the equation
  \begin{equation}
    \label{eq:tau}
    d\tilde{X} + A\tilde{X}\,dt + \beta(\tilde{X})\,dt
    \ni G\, dZ^{\tau-}, \qquad X(0)=X_0,
  \end{equation}
  where $Z^{\tau-}$ is a semimartingale with control process $C^{\tau-}$
  (see Lemma~\ref{lm:stopped}). Since
  \begin{align*}
    \lambda_{T-}^{C^{\tau-}}(G)
    = C^{\tau-}_{T-} \bigl( \norm{G}^2 \cdot C^{\tau-} \bigr)_{T-}
      = C_{\tau-} \bigl( \norm{G}^2 \cdot C \bigr)_{\tau-}
      = \lambda^C_{\tau-}(G)
  \end{align*}
  where the expectation of the last term is finite by assumption,
  equation \eqref{eq:tau} admits a unique strong solution
  $(\tilde{X},\tilde{\xi})$. In particular,
  \[
    \tilde{X} + \int_0^\cdot A\tilde{X}(s)\,ds
    + \int_0^\cdot \tilde{\xi}(s)\,ds = X_0 + G \cdot Z^{\tau-},
  \]
  which implies that $\Delta\tilde{X}_{\tau}=0$, because the Lebesgue
  integrals and the stochastic integral have no jump at $\tau$.
  Setting $X = \tilde{X}$ on $\co{0}{\tau}$ and
  $X_\tau:=X_{\tau-}+G_\tau\Delta Z_\tau$, and
  $\xi := \mathbbm{1}_{\cc{0}{\tau}} \tilde{\xi}$, we are left with
  \[
    X^\tau + \int_0^{\cdot \wedge \tau} AX(s)\,ds
    + \int_0^{\cdot \wedge \tau} \xi(s)\,ds = X_0
    + (G \mathbbm{1}_{\cc{0}{\tau}}) \cdot Z,
  \]
  i.e. $(X,\xi)$ is a strong solution on $\cc{0}{\tau}$ to
  \eqref{eq:add}. Since $\tilde\xi\in\beta(\tilde X)$
  a.e~in $\Omega\times (0,T)\times D$, we have in particular 
  that $\xi\in\beta(X)$ a.e.~in $[\![0,\tau[\![\times D$.
  To prove uniqueness it suffices to note that a strong solution
  $(X,\xi)$ on $\cc{0}{\tau}$ to \eqref{eq:add} coincides on
  $\co{0}{\tau}$ with the restriction to $\co{0}{\tau}$ of the unique
  strong solution $(\tilde{X},\tilde{\xi})$ to
  \eqref{eq:tau}. Uniqueness on the closed stochastic interval
  $\cc{0}{\tau}$ follows by the definition of $X_\tau$.
\end{proof}

As an immediate consequence of the uniqueness argument just used, one
obtains that (strong) solutions on closed stochastic intervals form a
direct system, in the following sense: if $(X,\xi)$ is a solution on
$\cc{0}{\tau}$ to \eqref{eq:add} and $\sigma$ is a stopping time with
$\sigma \leq \tau$, it is easily seen that $(X^\sigma,\xi^\sigma)$ is
a solution on $\cc{0}{\sigma}$ to \eqref{eq:add}. Such a solution, by
the reasoning of the previous remark, is the unique solution on
$\cc{0}{\sigma}$. This also implies that, given $(X_1,\xi_1)$ solution
on $\cc{0}{\tau_1}$ and $(X_2,\xi_2)$ solution on $\cc{0}{\tau_2}$,
one can construct a solution $(X,\xi)$ on $\cc{0}{\tau_1 \vee \tau_2}$
setting
\[
(X,\xi) :=
\begin{cases}
  (X_1,\xi_1) & \text{ on } \cc{0}{\tau_1},\\
  (X_2,\xi_2) & \text{ on } \cc{0}{\tau_2}.
\end{cases}
\]

\begin{prop}
  \label{prop:loc}
  Let $(X_i,\xi_i)$ be strong solutions on $\cc{0}{\tau_i}$, $i=1,2$,
  to \eqref{eq:add} with initial conditions $X_{0i} \in L^2(\Omega;H)$
  and coefficients $G_i \in \cS_{C}(Z)$, respectively, where $C$ is a
  control process for the semimartingale $Z$ and
  $\E\lambda_{\tau_i-}^{C}(G_i) < \infty$. Setting
  $\tau := \tau_1 \wedge \tau_2$, one has
  \begin{align*}
    &\E \bigl( X_1 - X_2 \bigr)_{\tau-}^{*2}
      + \E\int_0^\tau \norm[\big]{X_1(t)-X_2(t)}^2_{V}\,dt\\
  &\hspace{3em} \lesssim
    \E\norm[\big]{X_{01} - X_{02}}^2
    + \E \lambda^C_{\tau-}(G_1-G_2).
  \end{align*}
\end{prop}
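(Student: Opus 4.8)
The plan is to deduce the local estimate from the global continuity estimate of Proposition~\ref{prop:cont}, applied to the stopped semimartingale $Z^{\tau-}$. Since $\tau=\tau_1\wedge\tau_2\leq\tau_i$ and $C$ is increasing, the assumption $\E\lambda^C_{\tau_i-}(G_i)<\infty$ gives $\E\lambda^C_{\tau-}(G_i)<\infty$; moreover, exactly as in the proof of the preceding proposition, $\lambda^{C^{\tau-}}_{T-}(G_i)=\lambda^C_{\tau-}(G_i)$, and by Lemma~\ref{lm:stopped} the process $C^{\tau-}$ is a control process for $Z^{\tau-}$. Consequently \eqref{eq:tau} (i.e.\ \eqref{eq:add} driven by $Z^{\tau-}$) with data $(X_{0i},G_i)$ satisfies the hypotheses of Theorem~\ref{thm:add} and admits, by Proposition~\ref{prop:aux}, a unique global strong solution $(\tilde X_i,\tilde\xi_i)$ on $[0,T]$.

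I would then identify the given local solutions with these global ones. By the construction and uniqueness of solutions on closed stochastic intervals in the preceding proposition, the restriction of $(X_i,\xi_i)$ to $\cc{0}{\tau}$ is the unique strong solution on $\cc{0}{\tau}$ with data $(X_{0i},G_i)$, and it agrees with $\tilde X_i$ on $\co{0}{\tau}$; equivalently, on $\co{0}{\tau}$ both $X_i$ and $\tilde X_i$ solve the same equation driven by $G_i\cdot Z=G_i\cdot Z^{\tau-}$ with the same initial datum, so uniqueness forces $X_i=\tilde X_i$ there (the jump at $\tau$ being immaterial for quantities evaluated up to $\tau-$).

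Applying Proposition~\ref{prop:cont} to $(\tilde X_1,\tilde\xi_1)$ and $(\tilde X_2,\tilde\xi_2)$, regarded as global solutions driven by $Z^{\tau-}$ with common control $C^{\tau-}$, yields
\[
  \E\bigl(\tilde X_1-\tilde X_2\bigr)_{T-}^{*2}
  +\E\int_0^T\norm{\tilde X_1(t)-\tilde X_2(t)}^2_V\,dt
  \lesssim \E\norm{X_{01}-X_{02}}^2+\E\lambda^{C^{\tau-}}_{T-}(G_1-G_2),
\]
with implicit constant depending only on the coercivity constant of $A$. Using $\lambda^{C^{\tau-}}_{T-}(G_1-G_2)=\lambda^C_{\tau-}(G_1-G_2)$, the right-hand side is already in the desired form. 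To transfer the left-hand side back to $\cc{0}{\tau}$, note that $X_i=\tilde X_i$ on $\co{0}{\tau}$ gives $\bigl(X_1-X_2\bigr)^*_{\tau-}=\bigl(\tilde X_1-\tilde X_2\bigr)^*_{\tau-}\leq\bigl(\tilde X_1-\tilde X_2\bigr)^*_{T-}$ and $\int_0^\tau\norm{X_1-X_2}^2_V\,dt=\int_0^\tau\norm{\tilde X_1-\tilde X_2}^2_V\,dt\leq\int_0^T\norm{\tilde X_1-\tilde X_2}^2_V\,dt$, whence the claim. The one genuinely delicate step, which I would treat most carefully, is the identification $X_i=\tilde X_i$ on $\co{0}{\tau}$: it rests on uniqueness of strong solutions on closed stochastic intervals together with the fact that $G_i\cdot Z$ and $G_i\cdot Z^{\tau-}$ coincide on $\co{0}{\tau}$; the remaining manipulations are routine bookkeeping of stopping times.
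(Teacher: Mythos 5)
Your proof is correct and follows essentially the same route as the paper's: identify $(X_i,\xi_i)$ up to $\tau-$ with the global solutions $(\tilde X_i,\tilde\xi_i)$ of the equation driven by $Z^{\tau-}$ (using Lemma~\ref{lm:stopped}, the identity $\lambda^{C^{\tau-}}_{T-}=\lambda^C_{\tau-}$, and uniqueness of solutions on closed stochastic intervals), then apply Proposition~\ref{prop:cont}. The only slip is attributing existence and uniqueness of $(\tilde X_i,\tilde\xi_i)$ to Proposition~\ref{prop:aux}, which requires the extra assumption \eqref{eq:G0} on the coefficient; the correct reference is Theorem~\ref{thm:add} together with the uniqueness argument given in its proof.
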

\begin{proof}
  By the above discussion about strong solutions on closed stochastic
  intervals forming a direct system, it is immediately seen that
  $(X_1,\xi_1)$ and $(X_2,\xi_2)$ are strong solutions on
  $\cc{0}{\tau}$, as well as that
  $(X_i^{\tau-},\xi_i^{\tau-}) =
  (\tilde{X}_i^{\tau-},\tilde{\xi}_i^{\tau-})$, where
  $(\tilde{X}_i,\tilde{\xi}_i)$ is the unique strong solution to
  \[
    d\tilde{X}_i + A\tilde{X}_i\,dt + \beta(\cdot,\tilde{X}_i)\,dt \ni
    G_i\,dZ^{\tau-}, \qquad \tilde{X}_i(0)=X_{0i}.
  \]
  Since
  $\E\lambda^{C^{\tau-}}_{T-}\bigl(G_1-G_2 \bigr) =
  \E\lambda^C_{\tau-}(G_1 - G_2)$ and $C^{\tau-}$ is a control process
  for $Z^{\tau-}$ by Lemma~\ref{lm:stopped},
  Proposition~\ref{prop:cont} yields
  \begin{align*}
    &\E \bigl( X_1 - X_2 \bigr)_{\tau-}^{*2}
    + \E\int_0^\tau \norm[\big]{X_1(t)-X_2(t)}^2_{V}\,dt\\
    &\hspace{3em} \lesssim \E\norm[\big]{X_{01} - X_{02}}^2 
    + \E\lambda^C_{\tau-}(G_1 - G_2). \qedhere
  \end{align*}
\end{proof}


\ifbozza\newpage\else\fi
\section{Well-posedness with multiplicative noise}
\label{sec:mult}
This section is devoted to the proof of Theorem~\ref{thm:1}. We begin
showing that strong solutions on closed stochastic intervals exist.
\begin{prop}
  \label{prop:le}
  There exists a stopping time $\tau \neq 0$ and a strong
  solution on $\cc{0}{\tau}$ to \eqref{eq:0}.
\end{prop}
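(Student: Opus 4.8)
The plan is to construct the local solution by a Banach fixed-point argument that freezes the multiplicative coefficient and invokes the well-posedness of the additive equation \eqref{eq:add} on closed stochastic intervals. I fix a control process $C \in \cC(Z)$ and let $L$ be the increasing, nonnegative, right-continuous, adapted process associated with $C$ through Assumption~(B)(c). Since $X_0$, $C_0$ and $L_0$ are a.s.\ finite, the $\cF_0$-measurable sets $\Omega_m := \{\norm{X_0} \leq m\} \cap \{C_0 \leq m\} \cap \{L_0 \leq m\}$ increase to $\Omega$ up to a null set. For each fixed $m$ I would solve the equation with the square-integrable datum $X_0^m := X_0\mathbbm{1}_{\Omega_m}$, obtaining a strictly positive stopping time $\tau_m$ and a solution on $\cc{0}{\tau_m}$, and then patch these over the $\cF_0$-partition $\{\Omega_m \setminus \Omega_{m-1}\}_{m\geq 1}$ (with $\Omega_0 := \varnothing$). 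Setting $\tau := \sum_m \tau_m \mathbbm{1}_{\Omega_m \setminus \Omega_{m-1}}$, this yields $\tau \neq 0$; the patching preserves the solution property because $\cF_0$-indicators commute with $A$, with $\beta$, and with both the Lebesgue and the stochastic integral (for $F\in\cF_0$ one has $\mathbbm{1}_F(Y\cdot Z)=(\mathbbm{1}_F Y)\cdot Z$).

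Fixing $m$, I would introduce the stopping time
\[
  \tau_m := \inf\{t>0:\, C_t \geq m+1\} \wedge \inf\{t>0:\, L_t - L_0 \geq \delta\} \wedge T,
\]
with $\delta=\delta(m)>0$ to be chosen, and work in the complete metric space $\cM_m$ of adapted c\`adl\`ag $H$-valued processes $Y$ with $\E (Y)^{*2}_{\tau_m-} < \infty$, metrized by $d(u,v)^2 := \E (u-v)^{*2}_{\tau_m-}$. On $\Omega_m$ one has $C_0 \leq m < m+1$, so right-continuity gives $\tau_m>0$ there. For $u \in \cM_m$, Assumption~(B)(c) yields
\[
  \E\lambda^C_{\tau_m-}\bigl(B(\cdot,u)\bigr)
  \leq \E\Bigl[C_{\tau_m-}\int_0^{\tau_m-}\bigl(1+(u)^{*2}_{s-}\bigr)\,dL(s)\Bigr]
  \leq (m+1)\,\delta\,\bigl(1+\E(u)^{*2}_{\tau_m-}\bigr) < \infty,
\]
so that, by the well-posedness of \eqref{eq:add} on closed stochastic intervals, the additive equation with coefficient $G := B(\cdot,u)$ and datum $X_0^m$ admits a unique strong solution on $\cc{0}{\tau_m}$, whose first component I denote by $\Gamma u$ (with $\E(\Gamma u)^{*2}_{\tau_m-}<\infty$ as in the a priori estimate of Lemma~\ref{lm:est1}). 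Assumption~(B)(b) guarantees that $B(\cdot,u)\mathbbm{1}_{\cc{0}{\tau_m}}$ depends only on $u\mathbbm{1}_{\co{0}{\tau_m}}$, so $\Gamma$ is a well-defined map $\cM_m \to \cM_m$.

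The contraction estimate is the heart of the matter. Given $u_1,u_2 \in \cM_m$, the solutions $\Gamma u_1$, $\Gamma u_2$ share the datum $X_0^m$ and have coefficients $B(\cdot,u_1)$, $B(\cdot,u_2)$, so Proposition~\ref{prop:loc} together with Assumption~(B)(c) gives
\[
  \E (\Gamma u_1 - \Gamma u_2)^{*2}_{\tau_m-}
  \lesssim \E\lambda^C_{\tau_m-}\bigl(B(\cdot,u_1)-B(\cdot,u_2)\bigr)
  \leq \E\Bigl[C_{\tau_m-}\int_0^{\tau_m-}(u_1-u_2)^{*2}_{s-}\,dL(s)\Bigr]
  \leq (m+1)\,\delta\,\E (u_1-u_2)^{*2}_{\tau_m-},
\]
the implicit constant depending only on the coercivity constant of $A$. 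Choosing $\delta$ so small that this constant times $(m+1)\delta$ is $<1$ makes $\Gamma$ a contraction, and its unique fixed point $X^{(m)}$ solves the additive equation with $G=B(\cdot,X^{(m)})$ — that is, it solves \eqref{eq:0} on $\cc{0}{\tau_m}$ in the sense of Definition~\ref{def:sol}, the inclusion $\xi^{(m)}\in\beta(\cdot,X^{(m)})$ and the membership in $L^2(0,\tau_m;V)$ being inherited from the additive-noise solution. Patching over $m$ as above then concludes.

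The step I expect to be the main obstacle is the localization bookkeeping: arranging the $\cF_0$-partition together with the thresholds $m+1$ and $\delta(m)$ so that, simultaneously, the datum is square-integrable, $C$ and the $L$-increment are controlled tightly enough to make $\Gamma$ a genuine contraction, and $\tau_m$ stays strictly positive on $\Omega_m$. Coupled to this is the verification that the causality condition (B)(b) really does make $\Gamma$ well-defined on the \emph{closed} interval $\cc{0}{\tau_m}$ — so that no circular dependence on the value at the endpoint $\tau_m$ arises — and that the $\cF_0$-patching genuinely preserves the solution property in $V' \cap L^1(D)$.
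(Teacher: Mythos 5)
Your proposal is correct and follows essentially the same route as the paper: freeze the coefficient, solve the additive equation on a stochastic interval chosen so that $C_{\tau-}(L_{\tau-}-L_0)$ is small, and run a Banach fixed-point argument in the space of c\`adl\`ag processes with finite $\E(\cdot)^{*2}_{\tau-}$, the contraction constant coming from Proposition~\ref{prop:cont}/\ref{prop:loc} together with assumption (B)(c). The only differences are cosmetic: you cap $C$ and the increment of $L$ separately and patch over an $\cF_0$-partition in $m$ so as to get $\tau>0$ almost surely, whereas the paper stops the product $C(L-L_0)$ at a single small level $\alpha$ and settles for $\tau=\tau^0\mathbbm{1}_{\{\norm{X_0}\leq R\}}$, not identically zero, deferring the full extension to the iteration in the proof of Theorem~\ref{thm:1}.
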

\begin{proof}
  Let $\alpha \in \mathopen]0,1\mathclose[$ be a constant to be chosen
  later, $C$ a control process for $Z$, and $\tau^0$ the stopping time
  defined as
  \[
    \tau^0 := \inf\bigl\{ t \in [0,T]: C_t (L_t - L_0) \geq \alpha
    \bigr\} \wedge T.
  \]
  Note that $\tau^0$ is well-defined and not identically $0$ as the
  process $C(L - L_0)$ starts from $0$ and is right-continuous. Let
  $R \in \erre_+$ be such that the event $\{\norm{X_0} \leq R\}$ has
  strictly positive probability, and set $\tau:=\tau^0
  \mathbbm{1}_F$. Since $F \in \cF_0$, it is easily seen that $\tau$
  is a stopping time.
  Let $\mathbb{S}^2(T-)$ denote the vector space of adapted c\`adl\`ag
  processes $Y:\Omega \times [0,T\mathclose[ \to H$ such that
  \[
    \norm{Y}_2:= \bigl( \E Y_{T-}^{*2} \bigr)^{1/2} < \infty.
  \]
  It is not difficult to see that $\mathbb{S}^2(T-)$, endowed with the
  norm $\norm{\cdot}_2$, is a Banach space.
  For every $Y \in \mathbb{S}^2(T-)$ one has
  \begin{align*}
    &\lambda_{T-}^{C^{\tau-}} \bigl( B(Y) \bigr) = C^{\tau-}_{T-}
      \int_0^{T-} \norm[\big]{[B(Y)](s)}_{\cL(K,H)}^2\,dC^{\tau-}_s\\
    &\hspace{3em} \leq C_{\tau-} (L_{\tau-} - L_0) %
      \bigl( 1 + Y_{T-}^{*2} \bigr) \leq 
      \alpha \bigl( 1 + Y_{T-}^{*2} \bigr)
      \in L^1(\Omega),
  \end{align*}
  so that the equation
  \[
    d\tilde{X}(t) + A\tilde{X}(t)\,dt + \beta(\tilde{X}(t))\,dt
    \ni B(Y)\,dZ^{\tau-}_t, \qquad X(0)=X_0,
  \]
  admits a unique strong solution $(\tilde{X},\tilde{\xi})$ by
  Theorem~\ref{thm:add} (by the definition of the stopping time
  $\tau$, the latter result is indeed applicable).
  In particular, the map $Y \mapsto
  \tilde{X}$ is a homomorphism of $\mathbb{S}^2(T-)$. Moreover, for
  any $Y_1$, $Y_2 \in \mathbb{S}^2(T-)$, Proposition~\ref{prop:cont}
  yields, with obvious meaning of the notation,
  \[
    \norm[\big]{\tilde{X}_1 - \tilde{X}_2}^2_2
    + \E \norm[\big]{\tilde{X}_1 - \tilde{X}_2}^2_{L^2(0,T;V)} \lesssim
    \E \lambda_{T-}^{C^{\tau-}} \bigl(
    B(Y_{1}) - B(Y_{2}) \bigr),
  \]
  where, by the Lipschitz assumption on $B$,
  \begin{align*}
    &\lambda_{T-}^{C^{\tau-}} \bigl( B(Y_{1}) - B(Y_{2}) \bigr)\\
    &\hspace{3em} = C^{\tau-}_{T-} \int_0^{T-}
      \norm[\big]{[B(Y_{1})](s) - [B(Y_{2})](s)}^2_{\cL(K,H)} \,dC^{\tau-}_s\\
    &\hspace{3em} \leq
      C(\tau-)\int_0^{T-} \bigl( Y_1 - Y_2 \bigr)^{*2}_{s-}
      \,dL^{\tau-}(s) \leq
      \alpha \bigl( Y_1 - Y_2 \bigr)^{*2}_{T-},
  \end{align*}
  which implies
  \[
    \norm[\big]{\tilde{X}_1 - \tilde{X}_2}^2_2
    + \E \norm[\big]{\tilde{X}_1 - \tilde{X}_2}^2_{L^2(0,T;V)}
    \lesssim \alpha \norm[\big]{Y_1-Y_2}^2_2.
  \]
  Choosing $\alpha$ small enough, $Y \mapsto \tilde{X}$ is a
  contraction of $\mathbb{S}^2(T-)$, hence it admits a unique fixed
  point $\tilde{X} \in \mathbb{S}^2(T-)$ (the abuse of notation is
  harmless). Setting $X := X_0 \mathbbm{1}_{\{\tau=0\}} + \tilde{X}$
  in $\co{0}{\tau}$, $X_\tau := X_{\tau-} +
  [B(\tilde{X})]_{\tau}\Delta Z_\tau$, and $\xi := \tilde{\xi}
  \mathbbm{1}_{\cc{0}{\tau}}$, it is immediately seen that $(X,\xi)$
  is a strong solution on $\cc{0}{\tau}$ to \eqref{eq:0}.
\end{proof}

Once existence of solutions on stochastic intervals is established, we
establish their uniqueness in a local sense.
\begin{lemma}
  \label{lm:lu}
  Let $(X_1,\xi_1)$ and $(X_2,\xi_2)$ be strong solutions to
  \eqref{eq:0} on $\cc0{\tau_1}$ and $\cc0{\tau_2}$, respectively.
  Then, setting $\tau:=\tau_1\wedge\tau_2$, one has $X_1=X_2$ and
  $\xi_1=\xi_2$ on $\cc{0}{\tau}$.
\end{lemma}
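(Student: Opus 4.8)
The plan is to apply the It\^o formula of Proposition~\ref{prop:Ito} to the difference of the two solutions stopped at $\tau$, and then to close the resulting inequality by the stochastic Gronwall lemma, much as in the proofs of Lemma~\ref{lm:est1} and Proposition~\ref{prop:cont}; the one genuinely new feature is that the diffusion coefficient is now $B(\cdot,X_1)-B(\cdot,X_2)$, whose size is governed by the difference of the solutions themselves through the Lipschitz estimate~(B)(c). By the direct-system property noted after Proposition~\ref{prop:loc}, the restriction to $\cc0\tau$ of each $(X_i,\xi_i)$ is a solution on $\cc0\tau$, so that both equations hold stopped at $\tau$. Setting
\[
  X:=X_1^\tau-X_2^\tau,\qquad \xi:=\mathbbm{1}_{\co0\tau}(\xi_1-\xi_2),
  \qquad G:=\mathbbm{1}_{\cc0\tau}\bigl(B(\cdot,X_1)-B(\cdot,X_2)\bigr),
\]
and subtracting, the initial datum cancels and one is left with
\[
  X + \int_0^{\cdot\wedge\tau}AX(s)\,ds + \int_0^{\cdot\wedge\tau}\xi(s)\,ds
  = G\cdot Z,
\]
an identity of the form \eqref{eq:Yg} with $Y_0=0$. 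Its hypotheses hold: $G\in\cS_C(Z)$ by Definition~\ref{def:sol}(b), while, taking $a=1/2$, the convexity of $j$, $j^*$ and the symmetry condition~(J)(c) yield $\tfrac14 X\xi\le j(\cdot,X/2)+j^*(\cdot,\xi/2)\lesssim 1+\sum_{i}\bigl(j(\cdot,X_i)+j^*(\cdot,\xi_i)\bigr)$, which is integrable on $\cc0\tau\times D$ because $\xi_i\in\beta(\cdot,X_i)$.

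Applying Proposition~\ref{prop:Ito}, taking the supremum over $t<\sigma$ for a stopping time $\sigma\le\tau$, and discarding the terms $\int\ip{AX}{X}\,ds\ge c\int\norm{X}_V^2\,ds\ge0$ and $\int_D\xi X\,dx\ge0$ (nonnegative by coercivity of $A$ and monotonicity of $\beta$), one obtains
\[
  \tfrac12\,\E X^{*2}_{\sigma-}
  \le \tfrac12\,\E\bigl[G\cdot Z,G\cdot Z\bigr]_{\sigma-}
  + \E\bigl((X_-G)\cdot Z\bigr)^*_{\sigma-}.
\]
Estimating the two terms on the right verbatim as in Lemma~\ref{lm:est1} and then inserting the Lipschitz bound~(B)(c) in the form $\int_0^{\sigma-}\norm{G}^2_{\cL(K,H)}\,dC\le\int_0^{\sigma-}X^{*2}_{s-}\,dL(s)$ (here $X_-=(X_1-X_2)_-$ and $\norm{X_-(s)}\le X^*_{s-}$), one is led to
\[
  \E X^{*2}_{\sigma-}\lesssim \E\,C_{\sigma-}\int_0^{\sigma-}X^{*2}_{s-}\,dL(s),
\]
with an implicit constant depending only on the coercivity constant of $A$.

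To conclude I would feed this into the stochastic Gronwall lemma (Lemma~\ref{lm:gron}) applied to the increasing adapted process $\phi:=X^{*2}_{-}$. Since the initial data coincide the constant $a$ in that lemma is $0$, so its conclusion forces $\E X^{*2}_{\tau-}=0$, that is $X_1=X_2$ on $\co0\tau$. Once this is known, Assumption~(B)(b) gives $\mathbbm{1}_{\cc0\tau}B(\cdot,X_1)=\mathbbm{1}_{\cc0\tau}B(\cdot,X_2)$, whence the stochastic integrals agree on $\cc0\tau$; comparing the jumps at $\tau$ yields $X_1(\tau)=X_2(\tau)$, so $X_1=X_2$ on the whole closed interval $\cc0\tau$. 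Reinserting this into the difference equation gives $\int_0^{\cdot\wedge\tau}(\xi_1-\xi_2)(s)\,ds\equiv0$, hence $\xi_1=\xi_2$ a.e. on $\co0\tau\times D$.

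The delicate point is exactly this passage to Gronwall: the template of Lemma~\ref{lm:gron} requires a bounded horizon $\ell=\sup_{t<\tau}A(t)<\infty$ and a finite left-hand side, whereas the factor $C_{\sigma-}$ produced by the M\'etivier--Pellaumail maximal inequality sits \emph{outside} the integral and is a priori unbounded, and nothing a priori guarantees $\E X^{*2}_{\tau-}<\infty$. I would circumvent this by running the whole estimate on the localized intervals $\co0{\tau_n}$ with
\[
  \tau_n:=\inf\{t:\,C_t\vee L_t\ge n\}\wedge
  \inf\{t:\,\norm{X_1(t)}\vee\norm{X_2(t)}\ge n\}\wedge\tau,
\]
on which $C$ and $L$ are bounded by $n$ and the solutions are bounded (so that the integrability demanded by Proposition~\ref{prop:Ito} is automatic and $\E X^{*2}_{\tau_n-}<\infty$); there Lemma~\ref{lm:gron} applies with $A=L$, $b\sim n$ and $a=0$, giving $X=0$ on $\co0{\tau_n}$. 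Since $C$, $L$ and the c\`adl\`ag paths of $X_1$, $X_2$ are locally bounded, $\tau_n=\tau$ for $n$ large enough almost surely, and the conclusion propagates to all of $\co0\tau$.
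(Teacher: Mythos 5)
Your proof is correct and follows essentially the same route as the paper's: an energy identity for the stopped difference (the paper packages this step as Proposition~\ref{prop:cont}, you invoke Proposition~\ref{prop:Ito} directly), the Lipschitz bound of assumption (B)(c), localization by stopping times that keep $C$ and $L$ bounded so that the stochastic Gronwall lemma applies with zero inhomogeneity, and the same endgame for the jump at $\tau$ and for recovering $\xi_1=\xi_2$ by comparison. Your localizing times additionally cap $\norm{X_1}\vee\norm{X_2}$, which cleanly supplies the finiteness of $\E X^{*2}_{\tau_n-}$ that the paper gets from its choice $\tau_k^0:=\inf\{t:\,C_t(L_t-L_0)\geq k\}\wedge\tau$ restricted to $\{\norm{X_0}\leq k\}$; this is a minor variation, not a different argument.
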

\begin{proof}
  Setting $X:=X_1-X_2$ and $\xi:=\xi_1-\xi_2$, one has
  \begin{equation}
    \label{eq:ds}
    X^\tau + \int_0^\cdot \mathbbm{1}_{\cc0\tau}AX(s)\,ds 
    + \int_0^\cdot \mathbbm{1}_{\cc0\tau}\xi(s)\,ds =
    \bigl( \mathbbm{1}_{\cc0\tau} (B(X_{1}) - B(X_{2}) \bigr) \cdot Z,
  \end{equation}
  where $B(X_{1}) \in \cS_{C_1}(Z)$, $B(X_{2}) \in \cS_{C_2}(Z)$, with
  $C_1$ and $C_2$ control processes for $Z$. Recalling that
  $C:=C_1+C_2$ is a control process for $Z$, let us set, for every
  $k \in \enne$,
  \[
    \tau^0_k := \inf \bigl\{ t \in [0,T]:\, C(t) (L(t)-L(0)) \geq k \bigr\}
    \wedge \tau
  \]
  and $\tau_k := \tau^0_k \mathbbm{1}_{F_k}$, where $F_k$ is the event
  $\{\norm{X_0} \leq k \}$. By the hypotheses on $B$ it follows that
  \begin{align*}
    \lambda_{\tau_k-}^C \bigl( B(X_{i}) \bigr)
    &= C_{\tau_k-} \int_0^{\tau_k-}
      \norm[\big]{[B(X_{i})](s)}^2_{\cL(K,H)}\,dC_s\\
    &\leq C_{\tau_k-} \int_0^{\tau_k-} \bigl( 1 + (X_i)^{*2}_{s-}
      \bigr)\,dL_s\\
    &\leq C_{\tau_k-} (L_{\tau_k-} - L_0) \bigl( 1 + (X_i)^{*2}_{\tau_k-} \bigr)\\
    &\leq k \bigl( 1 + (X_i)^{*2}_{\tau_k-} \bigr) \in L^1(\Omega).
  \end{align*}
  Hence, for every stopping time $\sigma \leq \tau_k$,
  Proposition~\ref{prop:cont} yields
  \[
    \E X_{\sigma-}^{*2} + \E\int_0^\sigma \norm{X(s)}^2_V\,ds
    \lesssim \E \lambda^C_{\sigma-} \bigl( B(X_{1}) - B(X_{2}) \bigr),
  \]
  thus also, by the Lipschitz continuity of $B$,
  \[
    \E \bigl( X_1-X_2 \bigr)_{\sigma-}^{*2} \lesssim
    k \E \bigl( (X_1-X_2)^{*2} \cdot L \bigr)_{\sigma-},
  \]
  which implies, by Lemma~\ref{lm:gron}, that
  $\E\bigl( X_1-X_2 \bigr)^{*2}_{\tau_k-} = 0$ for every $k \in \enne$.
  Since $\tau_k$ tends monotonically to $\tau$ as $k \to \infty$, it
  immediately follows that $X_1=X_2$ on $\co{0}{\tau}$. This implies
  that $B(X_1)=B(X_2)$ on $\cc{0}{\tau}$, hence the jumps at $\tau$ of
  $X_1$ and $X_2$ are both equal to $[B(X_1)]_\tau \Delta Z_\tau$, so
  that $X_1=X_2$ on $\cc{0}{\tau}$. Finally, by comparison in
  \eqref{eq:ds}, one gets $\int_0^\cdot\xi(s)\,ds=0$, which implies
  also $\xi_1=\xi_2$.
\end{proof}

\medskip

Let us now come to the core of the proof of Theorem~\ref{thm:1}. The
idea is simply to iterate the construction of
Proposition~\ref{prop:le}, to obtain a solution on a sequence of
stochastic intervals $\cc{\tau_n}{\tau_{n+1}}$, $n \in \enne$, and to
show that $\P(\tau_n<T)$ tends to zero as $n \to \infty$. Calling
$\tau_1$ the stopping time given by Proposition~\ref{prop:le}, let us
define the increasing sequence of stopping times
$(\tau_n)_{n \in \enne}$ defined as
\[
\tau_{n+1} :=
\begin{cases}
  \tau_n, &\text{if } \norm{X(\tau_n)} > n,\\[4pt]
  \inf \bigl\{ t \geq \tau_n:\; C_t(L_t-L_{\tau_n}) > \alpha \bigr\}
  \wedge T, &\text{if } \norm{X(\tau_n)} \leq n,
\end{cases}
\]
where $\alpha$ is a constant as chosen in the proof of
Proposition~\ref{prop:le}. Note that $\tau_{n+1}$ is indeed a stopping
time because the event $\{\norm{X(\tau_n)} > n\}$ belongs to
$\cF_{\tau_n}$.
Proposition~\ref{prop:le} yields the existence of a strong solution on
$\cc{\tau_n}{\tau_{n+1}}$ to equation \eqref{eq:0} started at
$\tau_n$. A standard patching argument shows that one thus obtains a
strong solution $(X_n,\xi_n)$ on $\cc{0}{\tau_n}$ for every
$n \in \enne$.

We are going to show that $\P(\lim_n \tau_n < T) = 0$. Assume, by
contradiction, that $\P(\lim_n \tau_n < T) > 0$. One can rule out that
$\tau_{n+1} \neq \tau_n$ occurs only a finite number of times. In
fact, if it were the case, then there would exist $\bar{n} \in \enne$
such that $\norm{X(\tau_{\bar{n}})}$ is larger than every natural
number on an event of positive probability. This is impossible,
because $X_{\tau_n}$ is a well-defined $H$-valued random variable for
all $n \in \enne$. This implies that, on an event $F$ of strictly
positive probability, $L_{\tau_{n+1}} - L_{\tau_n} > 0$ for every $n$
belonging to an infinite subset $\enne'$ of $\enne$. Since $C$ is
increasing, one has
\[
  L_{\tau_{n+1}} - L_{\tau_n} > \frac{\alpha}{C_{\tau_n}}
  \geq \frac{\alpha}{C_T} \qquad \forall n \in \enne',
\]
hence denoting the variation of $L$ by $\abs{L}$ and recalling that
$L$ is also increasing,
\[
  \abs{L} \geq \sum_{n \in \enne'} \abs[\big]{L_{\tau_{n+1}} -
    L_{\tau_n}} = \infty \qquad \text{ on } F.
\]
This contradicts the hypotheses on $L$, therefore $\tau_n \to T$
$\P$-a.s. as $n \to \infty$.
The solution constructed above is thus defined on the whole interval
$[0,T]$. Furthermore, such a solution is also unique, thanks to
Lemma~\ref{lm:lu}. 

An argument entirely analogous to the one used in the proof of
Lemma~\ref{lm:lu} yields, bearing in mind the definition of $\tau_n$,
\[
  \E X^{*2}_{\tau_n-} +
  \E\int_0^{\tau_n} \norm{X(s)}^2_V\,ds
  +\E\int_0^{\tau_n}\!\!\int_D \xi(s)X(s)\,dx\,ds \lesssim  n^2
  \qquad \forall n \in \enne,
\]
hence, in particular,
\[
  X^{*2}_{\tau_n-}
  + \int_0^{\tau_n} \norm{X(s)}^2_V\,ds
  + \int_0^{\tau_n}\!\!\int_D \xi(s)X(s)\,dx\,ds
\]
is finite $\P$-a.s. for all $n \in \enne$. Since
$X^*_{\tau_n} \leq X^*_{\tau_n-} + \norm{\Delta X(\tau_n)}$ and, for
all $\omega$ in an event of probability one, there exists $\bar{n}$
such that $\tau_n(\omega)=T$ for all $n \geq \bar{n}$, it follows that
\[
  X^{*2}_{T}
  + \int_0^{T} \norm{X(s)}^2_V\,ds
  + \int_0^{T}\!\!\int_D \xi(s)X(s)\,dx\,ds < \infty
\]
with probability one.

Let us now turn to the continuity with respect to the initial
datum. Let $(X_{0n})$ be a sequence of $\cF_0$-measurable random
variables such that $X_{0n} \to X_0$ in probability, and let $X_n$ be
the unique solution to \eqref{eq:0} with initial datum $X_{0n}$.
Then there exists a subsequence $(X_{0n'})$ converging to $X_0$
$\P$-almost surely.
Setting
\[
S_k := \bigcap_{n' \geq k} \bigl\{ \norm{X_{0n'}-X_0} \leq 1 \bigr\},
\]
it is clear that $(S_k)$ is an increasing sequence of elements of
$\cF_0$ whose limit as $k \to \infty$ is an event of probability one. In fact,
\[
\P(S_k) = \P\bigl( \norm{X_{0n'}-X_0} \leq 1 \; \forall n' \geq k \bigr),
\]
which converges to one as $k \to \infty$ by definition of 
almost sure convergence. 
Moreover, $(X_{0n}-X_0) \mathbbm{1}_{S_k}$ obviously
converges to zero in probability as $n \to \infty$ for every $k$, and
\[
\norm[\big]{(X_{0n}-X_0) \mathbbm{1}_{S_k}} \leq 1 \qquad \forall n \geq k.
\]
Therefore, by the dominated convergence theorem, $(X_{0n}-X_0)
\mathbbm{1}_{S_k}$ converges to zero in $L^2(\Omega;H)$ as $n \to
\infty$ for each $k$.
Let $(\tau_k)$ be an increasing sequence of stopping times converging
to $T$, for instance as the one constructed above, and define a new
sequence of stopping times $(\sigma_k)$ as $\sigma_k:=\tau_k
\mathbbm{1}_{S_k}$. Then a (by now) familiar reasoning using It\^o's
formula for the square of the norm, stopping at $\sigma_k-$, and
applying the stochastic Gronwall lemma, much as in the proof of
Lemma~\ref{lm:lu}, yields
\[
  \E\bigl( X - X_n \bigr)^{*2}_{\sigma_k-} 
  +\E\int_0^{\sigma_k}\norm{(X - X_n)(s)}_V^2\,ds \lesssim
  \E \norm[\big]{X_{0}-X_{0n}}^2 \mathbbm{1}_{S_k},
\]
where the right-hand side converges to zero as $n \to \infty$ for
every $k$. We have thus shown that $X_n$ converges to $X$ prelocally
in $\mathbb{S}^2(T)$. Since $T$ was arbitrary and all results continue
to hold if $T$ is replaced by, e.g., $T+1$, $X^n$ converges to $X$
prelocally also in $\mathbb{S}^2((T+1))$, which implies that
$(X_n-X)^*_T$ converges to zero in probability (see, e.g.,
\cite[p.~261]{Protter}).
The proof of Theorem~\ref{thm:1} is thus completed.


\ifbozza\newpage\else\fi
\bibliographystyle{amsplain}
\bibliography{ref,extra}

\def\polhk#1{\setbox0=\hbox{#1}{\ooalign{\hidewidth
  \lower1.5ex\hbox{`}\hidewidth\crcr\unhbox0}}}
\providecommand{\bysame}{\leavevmode\hbox to3em{\hrulefill}\thinspace}
\providecommand{\MR}{\relax\ifhmode\unskip\space\fi MR }
\providecommand{\MRhref}[2]{%
  \href{http://www.ams.org/mathscinet-getitem?mr=#1}{#2}
}
\providecommand{\href}[2]{#2}
\begin{thebibliography}{10}

\bibitem{Kawa}
S.~Albeverio, H.~Kawabi, and M.~R{\"o}ckner, \emph{Strong uniqueness for both
  {D}irichlet operators and stochastic dynamics to {G}ibbs measures on a path
  space with exponential interactions}, J. Funct. Anal. \textbf{262} (2012),
  no.~2, 602--638. \MR{2854715}

\bibitem{Barbu:type}
V.~Barbu, \emph{Nonlinear differential equations of monotone types in {B}anach
  spaces}, Springer, New York, 2010. \MR{2582280}

\bibitem{Barbu:par}
\bysame, \emph{A variational approach to stochastic nonlinear parabolic
  problems}, J. Math. Anal. Appl. \textbf{384} (2011), no.~1, 2--15.
  \MR{2822846}

\bibitem{cm:EJP08}
Stefano Bonaccorsi, Carlo Marinelli, and Giacomo Ziglio, \emph{Stochastic
  {F}itz{H}ugh-{N}agumo equations on networks with impulsive noise}, Electron.
  J. Probab. \textbf{13} (2008), no. 49, 1362--1379. \MR{2438810 (2010d:60143)}

\bibitem{BrzLZ}
Z.~Brze\'zniak, Wei Liu, and Jiahui Zhu, \emph{Strong solutions for {SPDE} with
  locally monotone coefficients driven by {L}\'evy noise}, Nonlinear Anal. Real
  World Appl. \textbf{17} (2014), 283--310. \MR{3158475}

\bibitem{GessTol}
B.~Gess and J.~M. T\"olle, \emph{Multi-valued, singular stochastic evolution
  inclusions}, J. Math. Pures Appl. (9) \textbf{101} (2014), no.~6, 789--827.
  \MR{3205643}

\bibitem{Giaq:Mult}
M.~Giaquinta, \emph{Multiple integrals in the calculus of variations and
  nonlinear elliptic systems}, Princeton University Press, Princeton, NJ, 1983.
  \MR{717034}

\bibitem{Gyo-semimg}
I.~Gy{\H o}ngy, \emph{On stochastic equations with respect to semimartingales.
  {III}}, Stochastics \textbf{7} (1982), no.~4, 231--254.

\bibitem{KryGyo2}
I.~Gy{\H o}ngy and N.~V. Krylov, \emph{On stochastics equations with respect to
  semimartingales {II}. {I}t\^o formula in {B}anach spaces}, Stochastics
  \textbf{6} (1981/82), no.~3-4, 153--173. \MR{MR665398 (84m:60070a)}

\bibitem{Haa07}
M.~Haase, \emph{Convexity inequalities for positive operators}, Positivity
  \textbf{11} (2007), no.~1, 57--68. \MR{2297322 (2008d:39034)}

\bibitem{Hess:meas}
Ch. Hess, \emph{On the measurability of the conjugate and the subdifferential
  of a normal integrand}, J. Convex Anal. \textbf{2} (1995), no.~1-2, 153--165.
  \MR{1363366}

\bibitem{Jac:WP}
J.~Jacod, \emph{Une condition d'existence et d'unicit\'e pour les solutions
  fortes d'\'equations diff\'erentielles stochastiques}, Stochastics \textbf{4}
  (1980/81), no.~1, 23--38. \MR{587427 (82j:60105)}

\bibitem{KR-spde}
N.~V. Krylov and B.~L. Rozovski{\u\i}, \emph{Stochastic evolution equations},
  Current problems in mathematics, Vol. 14 (Russian), Akad. Nauk SSSR,
  Vsesoyuz. Inst. Nauchn. i Tekhn. Informatsii, Moscow, 1979, pp.~71--147, 256.
  \MR{MR570795 (81m:60116)}

\bibitem{LeLePr}
E.~Lenglart, D.~L{\'e}pingle, and M.~Pratelli, \emph{Pr\'esentation unifi\'ee
  de certaines in\'egalit\'es de la th\'eorie des martingales}, S{\'e}minaire
  de {P}robabilit{\'e}s, {XIV} ({P}aris, 1978/1979), Lecture Notes in Math.,
  vol. 784, Springer, Berlin, 1980, pp.~26--52. \MR{580107 (82d:60087)}

\bibitem{LiuRo}
W.~Liu and M.~R{\"o}ckner, \emph{Stochastic partial differential equations: an
  introduction}, Springer, Cham, 2015. \MR{3410409}

\bibitem{LiuSte}
W.~Liu and M.~Stephan, \emph{Yosida approximations for multivalued stochastic
  partial differential equations driven by {L}\'evy noise on a {G}elfand
  triple}, J. Math. Anal. Appl. \textbf{410} (2014), no.~1, 158--178.
  \MR{3109829}

\bibitem{cm:MF10}
C.~Marinelli, \emph{Local well-posedness of {M}usiela's {SPDE} with {L}\'evy
  noise}, Math. Finance \textbf{20} (2010), no.~3, 341--363. \MR{2667893}

\bibitem{cm:Ascona13}
\bysame, \emph{Well-posedness for a class of dissipative stochastic evolution
  equations with {W}iener and {P}oisson noise}, Seminar on {S}tochastic
  {A}nalysis, {R}andom {F}ields and {A}pplications {VII},
  Birkh\"auser/Springer, Basel, 2013, pp.~187--196. \MR{3380100}

\bibitem{cm:EJP10}
C.~Marinelli and M.~R{\"o}ckner, \emph{Well-posedness and asymptotic behavior
  for stochastic reaction-diffusion equations with multiplicative {P}oisson
  noise}, Electron. J. Probab. \textbf{15} (2010), no. 49, 1528--1555.
  \MR{2727320}

\bibitem{cm:ref}
C.~Marinelli and L.~Scarpa, \emph{Refined existence and regularity results for
  a class of semilinear dissipative {SPDEs}}, arXiv:1711.11091.

\bibitem{cm:div}
\bysame, \emph{Strong solutions to {SPDE}s with monotone drift in divergence
  form}, Stoch. Partial Differ. Equ. Anal. Comput. \textbf{6} (2018), no.~3,
  364--396. \MR{3844654}

\bibitem{cm:AP18}
\bysame, \emph{A variational approach to dissipative {SPDE}s with singular
  drift}, Ann. Probab. \textbf{46} (2018), no.~3, 1455--1497. \MR{3785593}

\bibitem{Met}
M.~M{\'e}tivier, \emph{Semimartingales}, Walter de Gruyter \& Co., Berlin,
  1982. \MR{MR688144 (84i:60002)}

\bibitem{MetPell}
M.~M{\'e}tivier and J.~Pellaumail, \emph{Stochastic integration}, Academic
  Press, New York, 1980. \MR{MR578177 (82b:60060)}

\bibitem{Pard}
E.~Pardoux, \emph{Equations aux deriv\'ees partielles stochastiques
  nonlin\'eaires monotones}, Ph.D. thesis, Universit\'e Paris XI, 1975.

\bibitem{Protter}
Ph.~E. Protter, \emph{Stochastic integration and differential equations},
  second ed., Springer-Verlag, Berlin, 2004. \MR{MR2020294 (2005k:60008)}

\bibitem{Rock:int}
R.~T. Rockafellar, \emph{Integral functionals, normal integrands and measurable
  selections}, Nonlinear operators and the calculus of variations ({S}ummer
  {S}chool, {U}niv. {L}ibre {B}ruxelles, {B}russels, 1975), Lecture Notes in
  Math., vol. 543, Springer, Berlin, 1976, pp.~157--207. \MR{0512209}

\bibitem{simon}
J.~Simon, \emph{Compact sets in the space {$L^p(0,T;B)$}}, Ann. Mat. Pura Appl.
  (4) \textbf{146} (1987), 65--96. \MR{916688 (89c:46055)}

\bibitem{Strauss}
W.~A. Strauss, \emph{On continuity of functions with values in various {B}anach
  spaces}, Pacific J. Math. \textbf{19} (1966), 543--551. \MR{0205121 (34
  \#4956)}

\end{thebibliography}

\end{document}